\documentclass[pdflatex,sn-mathphys-num]{sn-jnl}

\usepackage{graphicx}%
\usepackage{multirow}%
\usepackage[T2A]{fontenc}
\usepackage[russian,french, main=english]{babel}  
\usepackage{amsmath,amssymb,amsfonts}%
\usepackage{amsthm}%
\usepackage[title]{appendix}%
\usepackage{xcolor}%
\usepackage{textcomp}%
\usepackage{manyfoot}%
\usepackage{booktabs}%
\usepackage{algorithm}%
\usepackage{algorithmicx}%
\usepackage{algpseudocode}%
\usepackage{listings}%
\usepackage{mathtools}

\usepackage{hyperref}
\usepackage{tabularx}
\usepackage{seqsplit} 
\usepackage{tabularx}
\usepackage{booktabs}
\usepackage{seqsplit}
\usepackage{float}
\usepackage{array}
\usepackage{longtable}
\usepackage[table]{xcolor}
\definecolor{highlight}{RGB}{255, 191, 0}

\newtheorem{theorem}{Theorem}
\newtheorem{lemma}{Lemma}
\newtheorem{definition}{Definition}
\newtheorem{corollary}{Corollary}
\newtheorem{conjecture}{Conjecture}

\newtheorem{Proper}{Properties}

\theoremstyle{thmstyleone}%
\newtheorem{proposition}[theorem]{Proposition}%

\theoremstyle{thmstyletwo}%
\newtheorem{example}{Example}%
\newtheorem{remark}{Remark}%
\numberwithin{equation}{section}

\newcommand{\bk}{\mathbf{k}}

\newcommand{\ZZ}{\mathcal{Z}}

\newcommand{\zA}{\zeta^{}_\A}

\newcommand{\BZ}{\mathbb{Z}}

\newcommand{\BN}{\mathbb{N}}

\newcommand{\A}{\mathcal{A}}
\newcommand{\Z}{\mathbb{Z}}
\newcommand{\Q}{\mathbb{Q}}
\newcommand{\R}{\mathbb{R}}

\theoremstyle{thmstylethree}%
\newtheorem{question}{Question}%

\newcommand{\gaW}{\gamma_\A^\mathrm{W}}
\newcommand{\gaG}{\gamma_\A^\mathbb{G}}
\newcommand{\gaAG}{\gamma_\A^\mathbb{AG}}
\newcommand{\gaQ}{\gamma_\A^\mathrm{Q}}
\newcommand{\gaL}{\gamma_\A^\mathrm{L}}
\newcommand{\gaM}{\gamma_\A^\mathrm{M}}
\newcommand{\gaKp}{\gamma_\A^\mathrm{K_p}}

\def\la{\ell_\A}

\def\={\,=\,}

\begin{document}

\title[Gertsch] 
{Gertsch quotient living in the ``poor man’s adele ring" $\A$: Kurepa-Bell-Wilson congruence}


\author[]{\fnm{Francis Atta} \sur{Howard}}\email{hfrancisatta@ymail.com; hfrancisatta@gmail.com}



\affil[]{\orgdiv{International Chair in Mathematical Physics and Applications(CIPMA-UNESCO)}, \orgname{University of Abomey-Calavi}, \city{Cotonou}, \postcode{072 B.P. 50}, \country{Benin Republic}}



\abstract{Wilson's theorem is notably related to left factorials, expressed as $K_p \equiv \mathbf{Bell}_{p-1} - 1 \pmod p$, for prime $p\geq3$. This study examines a Kurepa-Bell-Wilson congruence (\textbf{KBW}), $\frac{K_p + 1}{p}\equiv \frac{ \mathbf{Bell}_{p-1}}{p}+ W_p \pmod{p}$, and demonstrates that it naturally generates the non-zero "Gertsch quotient ($\mathbb{G}_p$)," which, for larger primes modulo $p$ resides in the poor man's adele ring $\A$ .}


\keywords{Agoh-Giuga conjecture, Wilson quotient, Bell numbers, Gertsch quotient, Poor man's adèle ring}


\maketitle
\tableofcontents
\section{Introduction}\label{sec1}

The Left factorials $!n$ have drawn much attention since 1971 when introduced by Duro Kurepa \cite{kurepa1971left} as $!n=\sum_{m=1}^{n-1} m!=0! +1! + 2! +\ldots + (n-1)!$ for all $n\in \BN$. Kurepa conjectured that the $\gcd(!n, n!)=2$ for $n>2$ and added the equivalent that this conjecture is the same as checking for every odd prime $p>3$, $!p\not\equiv 0 \pmod{p}$. The evidence of interesting investigations has presented several approaches to solving this by trying to prove both cases. Authors including Mijajlović, Barsky, Petojević, Ivić, Gertsch, Zižović, Howard, Andrejić and many others \cite{barsky2004nombres, zivkovic1999number, mijajlovic1990some, gertsch1999congruences, howard2025partition, andrejic2016searching} actively investigate rigorous approach to prove this conjecture, recently, the author developed a new equivalent to the Kurepa conjecture which he solved by using techniques in Binary $\mathbf{GCD}$ and other tools to prove that the  $\gcd(\mathbb{F}_{n},(n+1)!)=2$ for $n>1.$ Several computational approach were used by authors to try to resolve the conjecture by either checking if there exists a counterexample or otherwise. In the quest to prove the Kurepa conjecture in terms of prime $p>3$, certain authors proposed new equivalents in terms of modulo $p$ to solve $!p\not\equiv 0 \pmod{p}$, in particular, Gertsch \cite{gertsch1999congruences} in 1999 proposed in her thesis "congruences pour quelques suites classiques de nombres; sommes de factorielles et calcul ombral" that 
$$   !p=K_p=\sum_{ n=0}^{p-1} n!= \sum_{ n=0}^{p-2} n! + (p-1)!\equiv \mathbf{Bell}_{p-1} -1 \pmod{p}$$
this new insight triggered some researches such as Daniel Barsky \cite{barsky2004nombres} to investigate $K_p\not\equiv 0 \pmod{p}$ as well as $\mathbf{Bell}_{p-1}\not\equiv 1 \pmod{p}$.
Gertsch's thesis is a great source of reference that brings indepth details and understanding of the Kurepa factorials in terms of modulo prime. 
 Mijajlović also proposed a new equivalent in terms of derangement modulo prime, specifically, in a Galois field of prime $p>3$;
 $\mathbf{Der}_{p-1}= \sum_{k=0}^{p-1} (-1)^{k+1}\frac{1}{k!} $ which is just  $K_p\equiv \mathbf{Der}_{p-1} \pmod{p} $ as proposed by Gertsch. This elegant connection makes researchers such as Don Zagier and Sun \cite{sun2011curious} investigate the possible nexus between Bell modulo prime and the derangement numbers from the identity 
 $$  \mathbf{Der}_{p-1} +1 \pmod{p}\equiv\mathbf{Bell}_{p-1}. $$
In 2002, V. S. Vladimirov gave a new equivalent to the Kurepa conjecture which involved the p-adic numbers and Bernoulli numbers, that is, 
$$!p \sum_{k=0}^{p-2} (-1)^k \frac{\mathbf{B}_k}{k!} \equiv \sum_{m=1}^{(p-3)/2} \frac{\mathbf{B}_{2m}}{(2m)!} [!(2m)-1] \pmod{p}$$
where $\mathbf{B}_k$ is the $k$-th Bernoulli number. From this relation, He gave a sufficient condition for the Kurerpa conjecture to be true. 
The Wilson's theorem is evident in the Kurepa modulo $p$ since the $(p-1)!\equiv -1 \pmod{p}$, furthermore, great mathematicians such as Glaisher, Lehmer, Carlitz, Niel, Norlund and many others have shown how the Wilson's theorem connect to the Bernoulli numbers modulo prime \cite{nielsen1913recherches, carlitz1961staudt, glaisher1900residues, beeger1913quelques}. Guiga \cite{borwein1996giuga} established a much beauitiful sum $s_n$ that links to our understanding of the primes and also Agoh \cite{agoh1995giuga} conjectured that 
 $$p\mathbf{B}_{p-1}\equiv -1 \pmod{p}$$
and also investigated various kinds of congruences related to the Wilson quotient by applying a Miki-type linear identity involving two different kinds of sums for Bernoulli numbers \cite{agoh2022congruences}. 

Let $\A$ be the quotient ring \cite{kontsevich2009holonomic} 
$$\A\coloneqq\dfrac{\left(\prod_p \Z/p\Z\right)}{\left(\bigoplus_p \Z/p\Z\right)}.$$
of the direct product of $\Z/p\Z$ over all primes modulo the ideal of the direct sum.
The field $\Q$ of rational numbers can naturally be embedded diagonally in $\A$,
and via this the ring $\A$ is regarded as a $\Q$-algebra.
Kaneko and Zagier \cite{kaneko2017finite, kaneko2019analogues, kaneko2021finite} jointly discovered and defined for each tuple of positive integers $(k_1,\ldots, k_r)$, a finite multiple zeta value $\zA(k_1,\ldots, k_r)$ in $\A.$ 
They conjectured fundamentally the existence of an isomorphism of
$\Q$-algebras between $\ZZ_\A$ and $\ZZ_\R/\zeta(2)\ZZ_\R$, where $\ZZ_\R$ is the $\Q$-algebra of usual multiple 
zeta values in $\R$ and $\zeta(2)\ZZ_\R$ is the ideal of $\ZZ_\R$ generated by $\zeta(2)$ 
with explicitly defined element $\zeta^{}_\mathcal{S}(\bk)$ in $\ZZ_\R/\zeta(2)\ZZ_\R$
which conjecturally corresponds to $\zA(\bk)$ under the predicted isomorphism. See \cite{kaneko2019analogues, kaneko2017finite} for more details, and \cite{zhao2016multiple}
for multiple zeta values in general.
For the Bernoulli number $\mathbf{B}_n$  and for each $k\ge2$ an element $Z_\A(k)$ in $\A$ given by 
\[ Z_\A(k)\coloneqq\left(\frac{\mathbf{B}_{p-k}}k\,\bmod p\right)_p\]
for more details on this, see \cite{ kontsevich2009holonomic, murahara2016note, kaneko2017finite, kaneko2025finite}.\\
The remainder of the study is organized as follows: 
Section \ref{sec2} deals with the main results of this study. We revisit already known concepts about Kurepa and Bell modulo prime following Gertsch's Thesis in section \ref{sec 3}. In addition, we consider some quotients involving Fermat, Wilson, Lerch and then introduce a quotient called the "Gertsch quotient" (c.f \cite{OEIS:A309483}) from Kurepa modulo prime in section \ref{sec 4}. In section \ref{Bernoulli}, we recall Vladimirov's theorem \cite{TA} on Kurepa-Bernoulli modulo prime and extend result to Wilson quotient, we also do same for the genus of Hodge integral $b_g$ \cite{faber1998hodge, faber2011tautological}. In section \ref{sec 7} we look at some known results about the Euler constant following Kaneko et al. \cite{kaneko2025finite, kaneko2017finite, kanekoZagier2026} and then extend these results to Gertsch quotient, Lerch quotient and Agoh-Guiga quotient. We finally end with some numerical Heuristics and the "congrunce $0\bmod{p}$" in section \ref{sec 8}.


\section{Main results}\label{sec2}
\begin{theorem}\label{Kaneko-ring}
Let $\A\coloneqq\left. \left(\prod_p \Z/p\Z\right) \middle/ \left(\bigoplus_p \Z/p\Z\right) \right.$ be the poor man's adele ring and $K_p= \sum_{ n=0}^{p-1} n!$ be the Kurepa factorial for odd primes $p\geq3$ then,
    $$ \gaKp\coloneqq\left(   K_p \bmod p\right)_p=\left(   \mathbf{Der}_{p-1} \bmod p\right)_p \in \A,$$
    specifically, $\gaKp\neq 0.$
\end{theorem}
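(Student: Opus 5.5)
The statement has two parts, and I would treat them separately: first a prime-by-prime congruence, then a non-vanishing statement in $\A$.

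\emph{Step 1: the identity $K_p\equiv \mathbf{Der}_{p-1}\pmod p$.}
For a fixed odd prime $p$ and $0\le j\le p-1$, Wilson's theorem gives
\[
(p-1)! \;=\; (p-1-j)!\prod_{i=1}^{j}(p-i)\;\equiv\;(p-1-j)!\,(-1)^j j! \pmod p .
\]
Hence $(p-1-j)!\equiv (-1)^{j+1}/j! \pmod p$. Reindexing $n=p-1-j$ in $K_p=\sum_{n=0}^{p-1}n!$ then gives
\[
K_p\equiv\sum_{j=0}^{p-1}\frac{(-1)^{j+1}}{j!}=\mathbf{Der}_{p-1}\pmod p .
\]
Since this holds for every $p\ge 3$, the two sequences agree componentwise in $\prod_p\Z/p\Z$. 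They therefore define the same class $\gaKp$ in $\A$. The same computation also recovers Gertsch's congruence $K_p\equiv \mathbf{Bell}_{p-1}-1$, using the classical formula $\mathbf{Bell}_{p-1}\equiv\sum_j(-1)^{j}\ldots$ together with the identity $\mathbf{Der}_{p-1}+1\equiv \mathbf{Bell}_{p-1}$ recalled in the introduction.

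\emph{Step 2: $\gaKp\neq0$.}
By definition of $\A$, the class is zero exactly when $K_p\equiv0\pmod p$ for all but finitely many $p$. So it suffices to show there are infinitely many primes with $p\nmid K_p$. This is strictly weaker than Kurepa's conjecture. If one is allowed to invoke the author's earlier result that $\gcd(!n,n!)=2$ for all $n>2$ (the Binary GCD argument cited in the introduction), then $p\nmid K_p$ for every $p>3$, and the claim follows at once.

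Without that result, I would argue by contradiction. Assume $K_p\equiv0$, equivalently $\mathbf{Bell}_{p-1}\equiv1\pmod p$, for all $p>P$. I would then combine this with Touchard's congruence $\mathbf{Bell}_{n+p}\equiv\mathbf{Bell}_n+\mathbf{Bell}_{n+1}\pmod p$. This should show that the reduction of the Bell polynomial structure is forced into a fixed shape for all large $p$. Finally I would try to contradict this using an archimedean or growth argument: compare the integers $K_p$, or $\mathbf{Der}_{p-1}\cdot(p-1)!$, against their forced divisibility by $p$ for all large $p$.

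\emph{Main obstacle.}
The hard step is the second half of Step 2. A uniform divisibility $p\mid K_p$ for all large $p$ is not obviously contradictory, because $K_p$ grows factorially and can absorb the factor $p$. I would first try the Touchard/trace route: a failure of Kurepa's conjecture for all large $p$ forces a root of $x^p-x-1$ over $\mathbb{F}_p$ to have trace-related properties (as in Barsky–Benzaghou). I would then try to show that these properties cannot hold for a set of primes of density one. If that fails, I would rely on the earlier $\gcd$ result as the input.
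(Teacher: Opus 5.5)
Your Step 1 is correct and agrees with the paper. The Wilson reindexing $(p-1-j)!\equiv(-1)^{j+1}/j!\pmod p$ is the same computation the paper gives in the subsection on $K_p$ in $\mathbf{GF}(p)$ (Gertsch's congruence). Membership in $\A$ is automatic because $K_p\in\Z$, and your reduction of $\gaKp\neq0$ to ``$p\nmid K_p$ for infinitely many primes $p$'' is also correct.

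The genuine gap is that Step 2 never establishes this.

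\emph{First route.} You assume $\gcd(!n,n!)=2$ for all $n>2$, but that is Kurepa's conjecture itself. It is an open problem, not a citable result: the Barsky--Benzaghou proof by the trace method you mention was later withdrawn. The paper does not use it in this proof, and elsewhere treats the conjecture as open (Proposition~\ref{strong cond}, Corollary~\ref{st cond}). Invoking it makes the theorem conditional on a statement strictly stronger than the one you want.

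\emph{Second route.} This is only a plan, and none of its ingredients can produce the contradiction:
\begin{itemize}
\item Touchard's congruence $\mathbf{Bell}_{n+p}\equiv\mathbf{Bell}_n+\mathbf{Bell}_{n+1}\pmod p$ relates indices for one fixed prime. It gives no link between the residues $\mathbf{Bell}_{p-1}\bmod p$ for different $p$, so it cannot make ``$\mathbf{Bell}_{p-1}\equiv1$ for all large $p$'' contradictory.
\item A growth comparison cannot work either, as you observe yourself.
\item The Artin--Schreier trace description only restates $p\mid K_p$; it does not show that this fails for infinitely many $p$.
\end{itemize}

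Nor can you close the gap by borrowing the paper's argument. That argument goes through \textbf{KBW} and the Gertsch quotient, and it does not actually prove non-vanishing, for four reasons.
\begin{itemize}
\item Since $K_p-\mathbf{Bell}_{p-1}+1\equiv0\pmod p$ for every odd prime whether or not $p\mid K_p$, the residue $\mathbb{G}_p\bmod p$ depends only on $K_p-\mathbf{Bell}_{p-1}$ modulo $p^2$. Nothing ties its vanishing to that of $K_p\bmod p$, so even a proof of $\gaG\neq0$ would not give $\gaKp\neq0$.
\item The quantities $(K_p+1)/p$ and $\mathbf{Bell}_{p-1}/p$ are not $p$-integral unless $K_p\equiv-1\pmod p$, so in particular not in the case $p\mid K_p$ that matters. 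Moreover, ``$(K_p+1)/p\equiv0$'' means $p^2\mid K_p+1$, which is a different condition.
\item The identification $\gaG=\gaW$ rests on $\mathbb{G}_p\equiv W_p$, which Theorem~\ref{G-Wil} records only for $p=3,7,2887$.
\item ``No known Gertsch primes'' and the checks at $p=5,13,563$ are finite computations, whereas a nonzero class in $\A$ requires infinitely many primes.
\end{itemize}

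What is actually proved, by you and by the paper, is the componentwise congruence $K_p\equiv\mathbf{Der}_{p-1}\pmod p$, hence equality of the two classes in $\A$. The assertion $\gaKp\neq0$ needs a genuinely new input. It is implied by Kurepa's conjecture but is weaker, and to my knowledge it is not known unconditionally.
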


\begin{theorem}\label{atta}
Let  $K_p \equiv \mathbf{Bell}_{p-1} -1 \pmod p$ be the Kurepa modulo prime $p\geq 3$ and $W_p = \frac{(p-1)! + 1}{p}$ be the Wilson quotient;  then  
\begin{align*}\label{KBW}
    \frac{K_p + 1}{p}\!\underset{\text{Wilson}}{\equiv} \frac{ \mathbf{Bell}_{p-1}}{p}+ W_p\tag{\textbf{KBW}}
\end{align*}  
and
\begin{align*}\label{KBL}
\frac{K_p + 1}{p}\!\underset{\text{Lerch}}{\equiv}\frac{ \mathbf{Bell}_{p-1}}{p}+ \sum_{x=1}^{p-1} q_p(x)\tag{\textbf{KBL}}
\end{align*}
if we set $x=2$ and $x=3$ then,

\begin{align*}\label{KBF}
\frac{K_p + 1}{p}\!\underset{\text{Fermat}}{\equiv}\frac{ \mathbf{Bell}_{p-1}}{p}-q_p(2)\tag{\textbf{KBF}},
\end{align*} 

\begin{align*}\label{KBM}
\frac{K_p + 1}{p}\!\underset{\text{Mirimanoff}}{\equiv}\frac{ \mathbf{Bell}_{p-1}}{p}-q_p(3)\tag{\textbf{KBM}}.
\end{align*} 

From equations \ref{KBW}, \ref{KBL}, \ref{KBF} and definition \ref{Euler-W} with $\gaW, \quad \log_\A(x)\in \A$, then

\begin{align}\label{E-wil}
    \left( \frac{K_p + 1}{p}\bmod p\right)_p\!\underset{\text{Wilson}}{=} \left(\frac{ \mathbf{Bell}_{p-1}}{p}\bmod p\right)_p + \gaW
\end{align}  
and
$$ \left( \frac{K_p + 1}{p}\bmod p\right)_p\!\underset{\text{Lerch}}{=}\left(\frac{ \mathbf{Bell}_{p-1}}{p}\bmod p\right)_p+ \sum_{x=1}^{p-1}\log_\A(x) $$
if $x=2$ and $x=3$ then
$$ \left( \frac{K_p + 1}{p}\bmod p\right)_p\!\underset{\text{Weiferich}}{=}\left(\frac{ \mathbf{Bell}_{p-1}}{p}\bmod p\right)_p -\log_\A(2), $$

$$ \left( \frac{K_p + 1}{p}\bmod p\right)_p\!\underset{\text{Mirimanoff}}{=}\left(\frac{ \mathbf{Bell}_{p-1}}{p}\bmod p\right)_p -\log_\A(3), $$

where $\A\coloneqq\left. \left(\prod_p \Z/p\Z\right) \middle/ \left(\bigoplus_p \Z/p\Z\right) \right.$.
\end{theorem}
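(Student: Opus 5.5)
The plan is to reduce everything to the splitting of $K_p$ already used in the Introduction,
$$K_p=\sum_{n=0}^{p-2} n! + (p-1)!,$$
together with Gertsch's congruence $\sum_{n=0}^{p-2} n!\equiv \mathbf{Bell}_{p-1}\pmod p$ (equivalently $K_p\equiv \mathbf{Bell}_{p-1}-1$). Write $S_p:=\sum_{n=0}^{p-2} n!$. By Wilson's theorem, $(p-1)!+1=pW_p$, so
$$K_p+1=S_p+\bigl((p-1)!+1\bigr)=S_p+pW_p .$$
Dividing by $p$ gives the exact rational identity $\frac{K_p+1}{p}=\frac{S_p}{p}+W_p$. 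To obtain \eqref{KBW}, the term $\frac{S_p}{p}$ must be replaced by $\frac{\mathbf{Bell}_{p-1}}{p}$. I would make this precise by reading $\frac{\mathbf{Bell}_{p-1}}{p}$ as the Gertsch-type quotient attached to the residue class of $S_p$, which is the convention of the Gertsch quotient introduced in Section \ref{sec 4}. Under that convention the two sides agree modulo $p$.

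For \eqref{KBL}, I would invoke Lerch's classical formula
$$\sum_{x=1}^{p-1} q_p(x)\equiv W_p \pmod p,\qquad q_p(x)=\frac{x^{p-1}-1}{p}.$$
Substituting it for $W_p$ in \eqref{KBW} gives \eqref{KBL} immediately. The specialisations \eqref{KBF} and \eqref{KBM} are to be read as the paper's instances of \eqref{KBL} with the Lerch sum replaced by the single Fermat quotient $-q_p(2)$, respectively $-q_p(3)$. Here I would use the standard relations linking $W_p$ to Fermat quotients, namely the logarithmic derivative of $(p-1)!$ and the relation $\sum_{x}q_p(x)\equiv W_p$ (Lerch), following the convention fixed in the statement. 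Each is a one-line substitution once the corresponding quotient identity is accepted.

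The adelic statements then follow componentwise. I would use Definition \ref{Euler-W}, which sets $\gaW=(W_p \bmod p)_p$ and $\log_\A(x)=(q_p(x)\bmod p)_p$. For each odd prime $p$ the congruences above hold in $\Z/p\Z$, and only finitely many primes are excluded (those below $3$). Hence the vectors of residues agree in $\prod_p\Z/p\Z$ modulo $\bigoplus_p \Z/p\Z$, that is, as elements of $\A$. Additivity of the projection $\prod_p\Z/p\Z\to\A$ turns the sums of residues into sums in $\A$. This yields \eqref{E-wil} and its Lerch, Wieferich and Mirimanoff analogues.

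The main obstacle is the first step: justifying the passage from the honest integer quotient $S_p/p$ to the symbol $\mathbf{Bell}_{p-1}/p$. Gertsch's congruence only controls $S_p$ modulo $p$, so dividing by $p$ loses exactly the information needed. I would first try to fix a canonical lift: define $\frac{\mathbf{Bell}_{p-1}}{p}$ modulo $p$ through $\frac{S_p}{p}$ itself, which is the Gertsch quotient. I would then check that every later identity uses only this definition, so that the theorem becomes a formal consequence of Wilson's and Lerch's theorems.
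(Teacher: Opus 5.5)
\textbf{The first step cannot be fixed by a convention.} You follow the paper's own route: split $K_p=S_p+(p-1)!$, apply Wilson, substitute Lerch, specialise, then pass to $\A$ componentwise. You correctly identify the weak step, but with the genuine Bell numbers (\textbf{KBW}) is false. Indeed
$\frac{K_p+1}{p}-\frac{\mathbf{Bell}_{p-1}}{p}-W_p=\mathbb{G}_p-W_p=\frac{S_p-\mathbf{Bell}_{p-1}}{p}$,
so (\textbf{KBW}) is equivalent to $S_p\equiv\mathbf{Bell}_{p-1}\pmod{p^2}$. Gertsch's congruence gives this only modulo $p$, and it fails in practice:
at $p=5$ every term is an integer, yet $\frac{K_5+1}{5}=7$ while $\frac{\mathbf{Bell}_4}{5}+W_5=3+5=8$;
at $p=11$, $\mathbb{G}_{11}-W_{11}=356540-329891=26649\equiv 7\pmod{11}$;
and Theorem~\ref{G-Wil} and its proof record that $\mathbb{G}_p\equiv W_p\pmod p$ holds only for $p=3,7,2887$ below $3000$.

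The paper's proof reaches (\textbf{KBW}) only by writing $\sum_{n=0}^{p-2}n!=\mathbf{Bell}_{p-1}$ as an equality before dividing by $p$. That is the same move you propose to make by fiat, so it gives no support. Declaring $\frac{\mathbf{Bell}_{p-1}}{p}:=\frac{S_p}{p}$ is also not the convention of Definition~\ref{quotient}. There $\mathbf{Bell}_{p-1}$ is the actual Bell number, e.g.\ $\mathbb{G}_5=(34-15+1)/5=4$, whereas under your reading $\mathbb{G}_p$ would equal $W_p$ identically. Your convention turns the theorem into the tautology $\frac{K_p+1}{p}=\frac{S_p}{p}+W_p$, which says nothing about Bell numbers. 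Substituting Lerch is a valid implication, but (\textbf{KBL}) inherits the false premise: at $p=5$ it would read $7\equiv 3+70\pmod 5$.

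\textbf{The specialisations and the passage to $\A$ are further gaps.} The logarithmic derivative of $(p-1)!=\prod_{x=1}^{p-1}x$ gives exactly Lerch's $\sum_{x=1}^{p-1}q_p(x)\equiv W_p$. Nothing collapses that sum to the single term $-q_p(2)$ or $-q_p(3)$. The paper gets (\textbf{KBF}) and (\textbf{KBM}) by replacing every summand $q_p(x)$ with $q_p(2)$ and using $(p-1)q_p(2)\equiv-q_p(2)$. That is not a legitimate substitution for a bound summation variable. Already $W_3=1$ while $-q_3(2)=-1$, and $W_5\equiv 0$ while $-q_5(2)=-3\not\equiv 0\pmod 5$. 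Checked directly, (\textbf{KBF}) at $p=5$ would say $7\equiv 3-3\pmod 5$, and (\textbf{KBM}) at $p=7$ would say $125\equiv 29-104\pmod 7$; both are false.

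Your passage to $\A$ needs each congruence at all but finitely many $p$, which is exactly what is missing and what the data above contradict. Moreover, the individual vectors $\bigl(\frac{\mathbf{Bell}_{p-1}}{p}\bmod p\bigr)_p$ and $\bigl(\frac{K_p+1}{p}\bmod p\bigr)_p$ are defined only if $p\mid\mathbf{Bell}_{p-1}$ for almost all $p$. Yet $\mathbf{Bell}_{10}\equiv 2\pmod{11}$, and the appendix table marks these entries ``Fractional'' for every listed prime except $5$ and $7$.

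What your computation honestly yields is
$\mathbb{G}_p\equiv W_p+\frac{S_p-\mathbf{Bell}_{p-1}}{p}\pmod p$,
i.e.\ $\gaG=\gaW+\bigl(\frac{S_p-\mathbf{Bell}_{p-1}}{p}\bmod p\bigr)_p$ in $\A$. The correction term does not vanish in general.
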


\begin{proposition}\label{strong cond}
Let  $K_p\not \equiv 0 \bmod{p}$ be the Kurepa conjecture.
    If there exists odd primes \( p \geq 2 \times 10^{13} \) for which \( W_p \equiv 0 \mod{p} \), such that \( W_p \equiv \mathbb{G}_p \mod{p} \) and satisfies the Gertsch quotient $$ \mathbb{G}_p \equiv 0 \mod{p} $$ then the Kurepa conjecture is untrue.
\end{proposition}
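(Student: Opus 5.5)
The plan is to turn each hypothesis into a congruence for $K_p$, using the \eqref{KBW} identity of Theorem~\ref{atta}, and then see what the hypotheses force on $K_p \bmod p$. Everything is elementary modular arithmetic once the Gertsch quotient is pinned down. I will use the definition of $\mathbb{G}_p$ from Section~\ref{sec 4}. I expect it to be the quotient that \eqref{KBW} naturally isolates:
$$\mathbb{G}_p \;=\; \frac{K_p+1-\mathbf{Bell}_{p-1}}{p}.$$
This is an integer, because Gertsch's congruence $K_p\equiv \mathbf{Bell}_{p-1}-1 \pmod p$ holds.

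\textbf{Step 1.} Rewrite \eqref{KBW} as $\mathbb{G}_p\equiv W_p \pmod p$. Under this definition that is exactly the hypothesis ``$W_p\equiv \mathbb{G}_p \bmod p$'', so that hypothesis is automatic rather than extra. The hypothesis $W_p\equiv 0 \pmod p$ says $p$ is a Wilson prime: $(p-1)!\equiv -1 \pmod{p^2}$. Combined with Step~1 it gives $\mathbb{G}_p\equiv 0 \pmod p$. Unwinding the definition, this is the lifted congruence
$$K_p+1\equiv \mathbf{Bell}_{p-1}\pmod{p^2}.$$

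\textbf{Step 2.} Reduce this modulo $p$ and compare with the negation of the conjecture. We get $K_p\equiv 0 \pmod p$ if and only if $\mathbf{Bell}_{p-1}\equiv 1 \pmod p$. So the proposition reduces to showing that the vanishing of the Gertsch quotient, at a Wilson prime $p\ge 2\times 10^{13}$, forces $\mathbf{Bell}_{p-1}\equiv 1\pmod p$. The bound $2\times10^{13}$ plays no role in the algebra. It only records that no counterexample to the Kurepa conjecture, and no new Wilson prime, is known below that range, so such a $p$ would be new.

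\textbf{Main obstacle.} Step~2 is where I expect the difficulty, and possibly a gap. The congruence $K_p+1\equiv \mathbf{Bell}_{p-1} \pmod{p^2}$ is a statement about the \emph{second} $p$-adic digit. It does not by itself constrain the first digit $\mathbf{Bell}_{p-1}\bmod p$. My first attempt would be to expand $K_p=\sum_{n=0}^{p-1}n!$ and $\mathbf{Bell}_{p-1}$ modulo $p^2$. I would use Touchard's congruence and the umbral/derangement expressions from Section~\ref{sec 3}, namely $\mathbf{Der}_{p-1}$ and Theorem~\ref{Kaneko-ring}. The aim is to see whether the Wilson condition $(p-1)!\equiv-1 \pmod{p^2}$ makes the $p$-adic expansion of $\mathbf{Bell}_{p-1}-K_p-1$ degenerate enough to force $K_p\equiv 0 \pmod p$.

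If that fails, the honest outcome is that the proposition must be read with an extra assumption. That assumption would be either that $\mathbb{G}_p$ is normalized in Section~\ref{sec 4} so that its vanishing includes $\mathbf{Bell}_{p-1}\equiv 1\pmod p$, or that $\mathbf{Bell}_{p-1}\equiv 1\pmod p$ is assumed outright. Under that reading the conclusion follows immediately from Step~2 via Gertsch's congruence: $K_p\equiv 0\pmod p$, contradicting $K_p\not\equiv 0 \bmod p$.
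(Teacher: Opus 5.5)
The gap you flag in Step~2 is genuine, and a finer expansion modulo $p^2$ will not remove it. Since
\[
p\,\mathbb{G}_p \;=\; \Bigl(\sum_{n=0}^{p-2} n! - \mathbf{Bell}_{p-1}\Bigr) + \bigl((p-1)!+1\bigr),
\]
the hypotheses $W_p\equiv 0$ and $\mathbb{G}_p\equiv 0 \pmod p$ say exactly that $(p-1)!\equiv -1 \pmod{p^2}$ and $\sum_{n=0}^{p-2} n!\equiv \mathbf{Bell}_{p-1} \pmod{p^2}$. Both integers involved already vanish modulo $p$ for every odd prime, by Wilson and by Gertsch. So the hypotheses only assert that their \emph{second} $p$-adic digits vanish. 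The conclusion $K_p\equiv 0$, i.e.\ $\mathbf{Bell}_{p-1}\equiv 1 \pmod p$, concerns the \emph{first} digit of $\mathbf{Bell}_{p-1}$, which the hypotheses never touch. Forcing it would need a new relation tying those second digits to $\mathbf{Bell}_{p-1}\bmod p$, and none is known. The Touchard and derangement congruences you planned to use hold only modulo $p$, so they cannot even register the hypotheses.

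The paper does not bridge this either, and gives no separate proof of the proposition. Its only supporting reasoning is the proof of Corollary~\ref{st cond} (``straightforward from Theorem~\ref{atta}'') and the end of the proof of Theorem~\ref{Kaneko-ring}. That reasoning treats $K_p=\mathbf{Bell}_{p-1}+(p-1)!$ as an exact equality, which fails already at $p=5$ ($15+24=39\neq 34$). It also identifies non-vanishing of $\mathbb{G}_p$ or of $(K_p+1)/p$ with non-vanishing of $K_p$. So the statement as written is an unsupported implication, and your ``honest outcome'' is the correct diagnosis rather than a failure of your approach.

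Two further corrections. First, Step~1 is wrong: \textbf{KBW} does not give $\mathbb{G}_p\equiv W_p \pmod p$ for all $p$. By the display above, $\mathbb{G}_p-W_p=\bigl(\sum_{n=0}^{p-2} n!-\mathbf{Bell}_{p-1}\bigr)/p$, so that congruence is equivalent to $\sum_{n=0}^{p-2} n!\equiv \mathbf{Bell}_{p-1} \pmod{p^2}$. This fails at $p=5$, where $\mathbb{G}_5=4$ and $W_5=5$, and Theorem~\ref{G-Wil} records it only for $p=3,7,2887$ below $3000$. The error is harmless here only because the proposition lists the congruence as a hypothesis; it is not automatic.

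Second, neither fallback rescues the statement. Definition~\ref{quotient} is exactly the $\mathbb{G}_p$ you guessed, so no other normalization is available. If you instead assume $\mathbf{Bell}_{p-1}\equiv 1\pmod p$ outright, the conclusion follows from Gertsch's congruence alone. The Wilson hypothesis, the Gertsch-quotient hypothesis and the bound $2\times 10^{13}$ then play no role.
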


\begin{corollary}\label{st cond}

    The Kurepa conjecture, $K_p \not\equiv 0 \mod{p}$, is valid if there are no odd primes $p\geq 3$ such that $\mathbb{G}_p \equiv 0 \mod{p}$.
\end{corollary}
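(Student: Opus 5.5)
The corollary is a contrapositive, so the plan is to prove the equivalent statement: \emph{if $K_p\equiv 0 \pmod p$ for some odd prime $p\geq 3$, then $\mathbb{G}_p\equiv 0\pmod p$.} The absence of primes with $\mathbb{G}_p\equiv 0\pmod p$ then excludes any counterexample to Kurepa's conjecture. This is the same mechanism as Proposition~\ref{strong cond}, but stated for every odd prime. It also avoids the extra hypothesis $W_p\equiv 0\pmod p$, which the Proposition needs only to make the condition computationally checkable beyond $2\times 10^{13}$.

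\textbf{Step 1 (small primes).} Dispose of $p=3$ and of the finitely many primes where the conjecture is already known numerically, by direct computation. For $p=3$, $K_3=0!+1!+2!=4\not\equiv 0\pmod 3$, so nothing needs to be shown there.

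\textbf{Step 2 (reduction to $\mathbf{Bell}_{p-1}$).} For general $p$, start from Gertsch's congruence $K_p\equiv \mathbf{Bell}_{p-1}-1\pmod p$. This is the hypothesis in Theorem~\ref{atta}, and equivalently the statement $K_p\equiv\mathbf{Der}_{p-1}$ of Theorem~\ref{Kaneko-ring}. Under the assumption $K_p\equiv 0$, it gives $\mathbf{Bell}_{p-1}\equiv 1\pmod p$. Hence both $K_p+1$ and $\mathbf{Bell}_{p-1}$ are $\equiv 1\pmod p$, and their difference $K_p+1-\mathbf{Bell}_{p-1}$ is divisible by $p$.

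\textbf{Step 3 (the \ref{KBW} congruence).} Lift to the quotient level. By \ref{KBW}, and with the precise meaning given there,
\[
\frac{K_p+1}{p}-\frac{\mathbf{Bell}_{p-1}}{p}\equiv W_p \pmod p ,
\]
so the integer $(K_p+1-\mathbf{Bell}_{p-1})/p$ agrees with $W_p$ modulo $p$.

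\textbf{Step 4 (identifying $\mathbb{G}_p$).} Substitute the definition of the Gertsch quotient $\mathbb{G}_p$ from Section~\ref{sec 4}. That quotient is built from exactly this Kurepa–Bell difference divided by $p$, so that $W_p\equiv\mathbb{G}_p\pmod p$ as in Proposition~\ref{strong cond}. The vanishing of $K_p$ modulo $p$ should then force $\mathbb{G}_p\equiv 0\pmod p$: the normalising terms of $\mathbb{G}_p$ cancel the $W_p$ contribution, leaving a multiple of $K_p$. Contraposing yields the corollary.

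\textbf{Main obstacle.} The delicate step is Step 4. One must check that the definition of $\mathbb{G}_p$ really isolates $K_p$ modulo $p$, i.e.\ that $\mathbb{G}_p\equiv c\,K_p\pmod p$ for a unit $c$, rather than merely being congruent to $W_p$. The Gertsch congruence holds only modulo $p$, while $\mathbb{G}_p$ lives at the level of $p$-quotients, effectively modulo $p^2$. The first thing I would try is to write $K_p+1-\mathbf{Bell}_{p-1}=p\,R_p$ with $R_p$ an explicit integer, using Gertsch's derivation through $\mathbf{Der}_{p-1}$. I would then express $\mathbb{G}_p$ in terms of $R_p$ and $W_p$, and verify that the $W_p$ terms cancel identically.
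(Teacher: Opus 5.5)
\textbf{Step 4 cannot be carried out, and the flaw already shows in Step 2.} There are no ``normalising terms'' in $\mathbb{G}_p$: by definition it is exactly $(K_p+1-\mathbf{Bell}_{p-1})/p$. Splitting $K_p=\,!(p-1)+(p-1)!$ gives the exact integer identity
\[
\mathbb{G}_p \;=\; W_p + D_p, \qquad D_p:=\frac{!(p-1)-\mathbf{Bell}_{p-1}}{p}\in\mathbb{Z},
\]
where $D_p$ is integral by Gertsch's congruence $!(p-1)\equiv \mathbf{Bell}_{p-1}\pmod p$. So $\mathbb{G}_p \bmod p$ depends on $(p-1)!$ and on $!(p-1)-\mathbf{Bell}_{p-1}$ modulo $p^2$. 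Your hypothesis $K_p\equiv 0$ (equivalently $\mathbf{Bell}_{p-1}\equiv 1$) is information modulo $p$ only. In Step 2 you use the hypothesis only to conclude $p\mid K_p+1-\mathbf{Bell}_{p-1}$, which holds for every odd prime. After that the hypothesis never enters. Nothing in your argument, or in the paper, transports information from the first-order residue $K_p \bmod p$ to the second-order residue $\mathbb{G}_p \bmod p$. So no congruence $\mathbb{G}_p\equiv cK_p\pmod p$ can be extracted this way; heuristically the two residues are independent.

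\textbf{Step 3 is not available either.} $(K_p+1)/p$ and $\mathbf{Bell}_{p-1}/p$ are not $p$-integral separately, so the only meaningful reading of (\textbf{KBW}) is $\mathbb{G}_p\equiv W_p\pmod p$. That is, $D_p\equiv 0$, i.e.\ $!(p-1)\equiv\mathbf{Bell}_{p-1}\pmod{p^2}$. The paper obtains it by treating $\sum_{n=0}^{p-2}n!=\mathbf{Bell}_{p-1}$ as an equality of integers, but this holds only modulo $p$ (e.g.\ $!(4)=10$ while $\mathbf{Bell}_4=15$). The congruence fails at $p=5$, where $\mathbb{G}_5=4$ and $W_5=5$. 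The paper's own Theorem~\ref{G-Wil} finds it only for $p=3,7,2887$ below $3000$.

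Steps 3 and 4 are also mutually inconsistent. Together they would give $W_p\equiv cK_p$, so Wilson primes would coincide with Kurepa counterexamples. Yet $W_5\equiv W_{13}\equiv 0$ while $K_5\equiv 4$ and $K_{13}\equiv 10$.

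The paper's own justification of this corollary (``straightforward from Theorem~\ref{atta}\dots must conform to Theorem~\ref{G-Wil}'') rests on the same \textbf{KBW} congruence, so it does not supply the missing link. The implication ``no Gertsch primes $\Rightarrow$ Kurepa'' is not established by this route.
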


\begin{corollary}\label{st cnd}
    Any odd prime $p\geq 2\times 10^{13}$, for which the Gertsch quotient, $\mathbb{G}_p \equiv 0 \mod{p}$ and $W_p\equiv  \mathbb{G}_p \mod{p}$ are satisfied, is also a Wilson prime.
\end{corollary}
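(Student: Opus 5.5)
The plan is to reduce the statement to the definition of a Wilson prime by transitivity of congruence modulo $p$, and then to place it alongside Proposition~\ref{strong cond}. A Wilson prime is an odd prime $p$ with $(p-1)!\equiv -1 \pmod{p^2}$. Since $W_p=\frac{(p-1)!+1}{p}$ is an integer by Wilson's theorem, this condition is equivalent to $W_p\equiv 0\pmod p$. So everything rests on deducing $W_p\equiv 0 \pmod p$ from the two hypotheses.

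First I chain the hypotheses. From $W_p\equiv \mathbb{G}_p \pmod p$ and $\mathbb{G}_p\equiv 0\pmod p$, transitivity gives $W_p\equiv 0\pmod p$.

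Next I unwind this. Writing $(p-1)!+1 = pW_p$ and $W_p=pm$ for some integer $m$, we get $(p-1)!+1=p^2 m$. That is exactly the Wilson-prime condition, so $p$ is a Wilson prime. The bound $p\geq 2\times 10^{13}$ is not used in the deduction. It only records that such a prime would lie beyond the known Wilson primes $5,13,563$ and beyond the range searched numerically, which is the setting of Proposition~\ref{strong cond}.

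For consistency with the framework of Theorem~\ref{atta}, I would also remark what happens in \eqref{KBW}. It reads $\frac{K_p+1}{p}\equiv \frac{\mathbf{Bell}_{p-1}}{p}+W_p \pmod p$. With $W_p\equiv 0$, the Wilson contribution drops out, and the Kurepa and Bell quotients coincide modulo $p$. Whatever role $\mathbb{G}_p$ plays there (introduced in Section~\ref{sec 4}) is therefore compatible with the hypotheses.

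I expect no real obstacle. The only point needing care is that $\mathbb{G}_p$ is an integer, or at least a $p$-integral rational, so that the congruences make sense. I would check this against its definition in Section~\ref{sec 4}. If it is a $p$-integral rational, transitivity still holds in $\Z_{(p)}/p\Z_{(p)}\cong \Z/p\Z$, and the argument goes through unchanged.
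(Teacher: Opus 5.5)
Your argument is correct and is essentially the paper's own, which it only sketches as ``straightforward from Theorem~\ref{atta}'': $W_p\equiv\mathbb{G}_p\equiv 0\pmod p$ gives $(p-1)!\equiv -1\pmod{p^2}$, and, as you note, the bound $p\geq 2\times 10^{13}$ plays no role in the deduction. Taking $W_p\equiv\mathbb{G}_p$ from the hypothesis rather than from \eqref{KBW} is the cleaner choice, but drop your side remark that the Kurepa and Bell quotients ``coincide modulo $p$'': $\frac{K_p+1}{p}$ and $\frac{\mathbf{Bell}_{p-1}}{p}$ are $p$-integral only when $p\mid\mathbf{Bell}_{p-1}$, so only their difference $\mathbb{G}_p$ can be reduced mod $p$, and that just restates the hypothesis.
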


\section*{Notation and terminology}
In order to avoid ambiguity due to similar notations in literature we shall use $K_p$ to represent the Kurepa modulo prime instead of $!p$, we shall use $K_n$ to replace the left factorials $!n$ in most cases. The bold face symbols, $\mathbf{Bell}_{n}$ represents Bell numbers, $\mathbf{B}_{k}$ is the $k$-th Bernoulli numbers, $\mathbf{Der}_k$ denotes the derangement numbers. The Gertsch quotient is denoted by $\mathbb{G}_p$, the Vladimirov numbers by the atoms $V_p$ and $V_{p}^{*}$. Following the Agoh and Giuga conjecture we shall denote $\mathbb{AG}_p$ as Agoh-Giuga quotient. $\mathbf{KBW}$ is Kurepa-Bell-Wilson, $\mathbf{KBL}$ is Kurepa-Bell-Lerch, and $\mathbf{KBF}$ is Kurepa-Bell-Fermat.  

\section{Gertsch's results on Kurepa and Bell modulo p}\label{sec 3}

\begin{conjecture}\cite{gertsch1999congruences, kurepa1971left}
    Let $p$ be an odd prime, then 
    \begin{enumerate}
        \item $K_p=\sum_{0\leq n\leq (p-1)} n!= 0! + 1! + 2! + \cdots + (p-2)! + (p-1)!$ is not congruent to zero modulo $p.$ 
    \item the $(p-1)^{th}$ Bell number $\mathbf{Bell}_{p-1}$ is not congruent to 1 modulo $p.$
     \end{enumerate}
\end{conjecture}

The table \ref{kp_bell} below gives a simple view of the connection between the Kurepa modulo prime and the Bell numbers, specifically, we shall see in the subsequent sections how this connection easily builds up.

\begin{table}[h]
\centering
\caption{Values for $K_p$ and $\mathbf{Bell}_{p-1}$ for primes $p\leq 17$ \cite{OEIS:A003422}}
\label{kp_bell}
\begin{tabular}{|c|r|c|r|c|}
\hline
$p$ & $K_p$ & $K_p \pmod{p}$ & $\mathbf{Bell}_{p-1}$ & $\mathbf{Bell}_{p-1} \pmod{p}$ \\
\hline
3 & 4 & 1 & 2 & 2 \\
5 & 34 & 4 & 15 & 0 \\
7 & 874 & 6 & 203 & 0 \\
11 & 4\,037\,914 & 1 & 115\,975 & 2 \\
13 & 522\,956\,314 & 10 & 4\,213\,597 & 11 \\
17 & 22\,324\,392\,524\,314 & 13 & 10\,480\,142\,147 & 14 \\
\hline
\end{tabular}
\end{table}

\begin{Proper}\label{Gt}\cite{gertsch1996some}
    For $n=p-1$, with an odd prime $p$, the following congruence modulo $p$ hold: 
   \begin{enumerate}
       \item $\mathbf{Der}_{p-1}\equiv K_p \mod{p}$
       \item $\mathbf{Der}_{p-1}\not\equiv  0 \mod{p}$  is equivalent the Kurepa conjecture
       \item $\mathbf{Bell}_{p-1}\equiv \mathbf{Der}_{p-1} + 1\mod{p}$
       \item $K_p\equiv \sum_{0\leq n\leq (p-1)} S(n, p-1)-1 \mod{p}$
       \item $K_p= \mathbf{Bell}_{p-1} -1 \mod{p}$. 
   \end{enumerate}

\end{Proper}

\begin{definition}\label{Tch} \cite{radoux284congruence, gertsch1996some, sun2013congruences, gertsch1999congruences}
    For a prime $p,$ we have the following congruence:
    $$\mathbf{Bell}_p(x)\equiv x+ x^p \mod{p \BZ[x]}. $$
Specifically, Touchard congruences allow us to obtain these identities for Bell numbers
    \begin{align*}
        \mathbf{Bell}_{n+p}(x)&\equiv  \mathbf{Bell}_{n+1}(x)+ x^p  \mathbf{Bell}_{n}(x)\pmod{p \BZ[x]}\\
         \mathbf{Bell}_{p-1}(x)&\equiv \mathbf{Der}_{p-1}(1-x) + 1 \pmod{p \BZ[x]}.
    \end{align*}
\end{definition}

From the above definition \ref{Tch} and the Touchard congruences, if we set $x=1$, then we have:
\begin{align*}
\mathbf{Bell}_p(x)&\equiv x+ x^p \mod{p \BZ[x]}\\
\mathbf{Bell}_p(1)&\equiv 1+ 1 \mod{p \BZ[1]}\\\nonumber
\mathbf{Bell}_p&\equiv 2 \pmod{p}.
\end{align*}

\subsection{Gertsch-Kurepa polynomial}
\begin{align}
    K_p(x)&= 1+ \sum_{1\leq k\leq (p-1)}  \sum_{1\leq m\leq (p-k)} (-1)^k S(n, p-k) x^k \mod{p \BZ_p[x]}\\ \nonumber
    &= 1+ \sum_{1\leq k\leq (p-1)} (-1)^k \mathbf{Bell}_{p-1} (x^k) \mod{p \BZ_p[x]}
\end{align}
If we set $x=-1$ we have 
$$K_p(-1) = 1+ \sum_{0\leq k\leq (p-1)} (-1)^k \mathbf{Bell}_{p-1} (-1)^k \mod{p \BZ_p[1]}$$

$$K_p \equiv \sum_{1\leq k\leq (p-1)} \mathbf{Bell}_{k}\equiv  \mathbf{Bell}_{p-1} \pmod{p}.$$

It is easy to see that,
\begin{align*}
 K_p=&\sum_{0\leq n\leq (p-1)} n!= 0! + 1! + 2! + \cdots + (p-2)! + (p-1)!\\
        K_p&=\sum_{ n=0}^{p-1} n!= \sum_{ n=0}^{p-2} n! + (p-1)!\\
        \mbox{where}\\
        &\sum_{n=0}^{p-2} n! \equiv \mathbf{Bell}_{p-1},\\
         K_p&=\sum_{ n=0}^{p-1} n!\equiv \mathbf{Bell}_{p-1} + (p-1)!
\end{align*}
also Wilson's theorem tells us that $(p-1)!\equiv -1 \mod{p}$ which leads as to the
$$ K_p=\sum_{ n=0}^{p-1} n!\equiv \mathbf{Bell}_{p-1} -1 \pmod{p}.$$

Let $ (x)_p=x(x-1)\cdots (x-p+1)=x^p -x \pmod p $ and $a^p \equiv a \pmod{p}$ be the Langrange and Fermat little Theorems, respectively, then
setting $a=x$ we have $(x)_p\equiv 0 \pmod p$ and the Stirling number of the second kind
\begin{equation}
    S(p,k)\equiv 0 \pmod p
\end{equation}
except for $ S(p,1)= S(p,p)=1 $ where $k\leq p-1.$
These lead to the following well-known results for odd primes  $p$,
    $$K_p\equiv \sum_{0\leq n\leq (p-1)} S(n, p-1)-1 \mod{p}\equiv \mathbf{Bell}_{p-1}-1 \pmod{p}. $$

The Kurepa modulo prime shows remarkable connections with Bell numbers, Derangement numbers, Wilson theorem and many more, authors such as Don Zagier and Sun, investigated and found these equations \cite{sun2011curious, sun2013congruences, mezo2011r}
for every $n\geq 0$, $m>0$ and any prime $p\nmid m$ we have 
    $$x^m \sum_{1<k<p-1} \dfrac{\mathbf{Bell}_{n+k}}{(-m)^k}\equiv x^p \sum_{k=0}^n S(n,k) (-1)^{m+k-1}\mathbf{Der}_{m+k-1}(1-x) \pmod{p \BZ[x]}$$
    if we set $x=1$, we have 
    $$\sum_{0<k<p} \dfrac{\mathbf{Bell}_k}{(-m)^k}\equiv (-1)^{m-1}\mathbf{Der}_{m-1}\mod{p}.$$

\section{Fermat, Wilson and Lerch quotients and Kurepa modulo p}\label{sec 4}
It is well known that the Kurepa factorial and the Bell numbers are related both for integer $n\in \BN$ and for prime $p$ \cite{howard2025partition, howardг, gertsch1996some, barsky2004nombres}. Specifically, Gertsch, Barsky \cite{gertsch1996some, gertsch1999congruences, barsky2004nombres} remarked that 
 $K_p= \mathbf{Bell}_{p-1} -1 \bmod{p}$, also Wilson's theorem states that $(p-1)!\equiv -1 \mod{p}$, these relations give a connection with the Kurepa factorial, Bell modulo prime, Wilson's quotient and also to the Fermat little theorem $a^{p-1} \equiv 1 \bmod{p}$ where $p$ does not divide integer $a$ \cite{lerch1905theorie, crandall1997search, agoh2022congruences, lehmer1938congruences, glaisher1900residues}. In this section, we shall consider some well-known quotients, including the Fermat, Wilson, Lerch quotients, and then construct a quotient from the Kurepa and Bell primes \cite{OEIS:A309483}.

\begin{definition}\label{quotient}
    Let $p>2$ be odd primes and $K_p=\sum_{ n=0}^{p-1} n!= \mathbf{Bell}_{p-1} -1 \pmod{p}$ be as in property \ref{Gt}, then  
$$\mathbb{G}_{p} = \frac{(K_p - \mathbf{Bell}_{p-1}) + 1}{p}$$
is called the Gertsch quotient of $p$.(c.f \cite{OEIS:A309483})
\end{definition}

\begin{remark}\label{wilson}
   For certain odd primes $p>2$, $(p-1)! \equiv K_p - \mathbf{Bell}_{p-1} \pmod{p}.$
\end{remark}

\begin{theorem}\label{G-Wil}
    For primes $p=3,7, 2887$ the Wilson quotient $W_p$ is congruent to the Gertsch quotient modulo $p$, $\mathbb{G}_p$ that is, $$ W_p \equiv \mathbb{G}_p  \pmod{p}.$$
\end{theorem}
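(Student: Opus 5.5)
The plan is to reduce the statement to a single congruence modulo $p^2$, and then check that congruence for each of the three primes. By Definition \ref{quotient} and the definition of $W_p$ we have
$$W_p-\mathbb{G}_p=\frac{(p-1)!+1-(K_p-\mathbf{Bell}_{p-1})-1}{p}=\frac{\mathbf{Bell}_{p-1}-\sum_{n=0}^{p-2}n!}{p},$$
because $K_p-(p-1)!=\sum_{n=0}^{p-2}n!$. Both quotients are integers. For $W_p$ this is Wilson's theorem. For $\mathbb{G}_p$ it follows from Property \ref{Gt}(5), which gives $K_p-\mathbf{Bell}_{p-1}+1\equiv 0 \pmod p$. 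Hence $W_p\equiv\mathbb{G}_p \pmod p$ holds if and only if
$$\sum_{n=0}^{p-2}n!\equiv \mathbf{Bell}_{p-1}\pmod{p^2}.$$
The theorem therefore reduces to verifying this congruence for $p=3,7,2887$. It is a lift to modulus $p^2$ of the congruence $\sum_{n=0}^{p-2}n!\equiv\mathbf{Bell}_{p-1} \pmod p$ recalled in Section \ref{sec 3}.

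For the small primes I check by hand.
\begin{itemize}
\item For $p=3$: $0!+1!=2=\mathbf{Bell}_2$, so the difference is $0$.
\item For $p=7$: $\sum_{n=0}^{5}n!=154$ and $\mathbf{Bell}_6=203$ (Table \ref{kp_bell}). The difference is $49=7^2$, so the congruence holds.
\end{itemize}
Both values can be read off or recomputed directly from the table.

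For $p=2887$ I do not expect any structural shortcut, so the step is a certified computation modulo $p^2=8334769$. The factorial sum $\sum_{n=0}^{p-2}n! \bmod p^2$ is computed by the running product $n!=n\cdot(n-1)!$ reduced mod $p^2$ at each step, which takes $O(p)$ operations. For $\mathbf{Bell}_{p-1}\bmod p^2$ I use the Bell triangle (Aitken's array). In this array each row starts with the last entry of the previous row, each subsequent entry is the sum of its left neighbour and the entry above that neighbour, and $\mathbf{Bell}_{n}$ is the first entry of row $n$. Everything is reduced mod $p^2$, which costs $O(p^2)\approx 8\times10^6$ additions. As a cross-check I would also compute $\mathbf{Bell}_{p-1}=\sum_k S(p-1,k)$ mod $p^2$ via the Stirling recurrence. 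The last step is to compare the two residues and confirm that their difference is $0$ modulo $p^2$.

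The main obstacle is the case $p=2887$. There the claim is a genuinely numerical coincidence, being the vanishing of a quantity that is heuristically random mod $p$, and it cannot be derived from the mod-$p$ identities of Section \ref{sec 3}. The proof for this prime is therefore a reproducible finite computation, which should be cross-validated by the two independent Bell-number algorithms above. It is also consistent with the list in \cite{OEIS:A309483}.
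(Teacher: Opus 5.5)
Your proposal is correct and essentially matches the paper's argument. The paper's proof is just a PARI/GP check that $W_p \equiv \mathbb{G}_p \pmod p$ for $p \le 3000$, and your proof is likewise a finite verification. Your preliminary reduction $W_p - \mathbb{G}_p = \bigl(\mathbf{Bell}_{p-1} - \sum_{n=0}^{p-2} n!\bigr)/p$ is a worthwhile sharpening. It turns the claim into $p^2 \mid \mathbf{Bell}_{p-1} - \sum_{n=0}^{p-2} n!$, which is the same condition the paper later writes as $K_p - \mathbf{Bell}_{p-1} \equiv (p-1)! \pmod{p^2}$ in Section \ref{sec 8}. This makes $p=3,7$ checkable by hand and lets the $p=2887$ case be done entirely modulo $p^2$ rather than with huge integers.
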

\begin{proof}
Using simulation from Pari-GP online we checked for $p\leq 3000$ and observed that $ W_p \equiv \mathbb{G}_p  \pmod{p}$ only for $p=3, 7 \quad\text{and 2887}$. 
    
\end{proof}

\begin{lemma}\label{intg}
    For all primes $p\geq3$, the Gertsch quotient $\mathbb{G}_p$, is always an integer. 
\end{lemma}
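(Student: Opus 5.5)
The plan is to reduce the statement to a single congruence modulo $p$. By Definition~\ref{quotient}, $\mathbb{G}_p = \bigl((K_p - \mathbf{Bell}_{p-1}) + 1\bigr)/p$, and both $K_p$ and $\mathbf{Bell}_{p-1}$ are integers. So $\mathbb{G}_p\in\BZ$ holds exactly when
$$K_p \equiv \mathbf{Bell}_{p-1} - 1 \pmod{p}.$$
This is item 5 of Properties~\ref{Gt}, so citing it essentially finishes the argument. Since the paper's own derivation of that item is somewhat loose, I would also give a short self-contained proof. Throughout, for $0\le k\le p-1$ the quantity $1/k!$ is read in $\Z/p\Z$, which is legitimate because $p\nmid k!$.

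\textbf{Step 1 (left factorial side).} For $0\le n\le p-1$ write $(p-1)! = n!\,(n+1)(n+2)\cdots(p-1)$. Reducing each factor $p-j\equiv -j$ gives $(n+1)\cdots(p-1)\equiv (-1)^{p-1-n}(p-1-n)!$. Wilson's theorem then yields
$$n! \equiv -\frac{(-1)^{p-1-n}}{(p-1-n)!}\pmod p.$$
Summing over $n$ and setting $k=p-1-n$ gives
$$K_p\equiv -\sum_{k=0}^{p-1}\frac{(-1)^k}{k!} = -1+\sum_{k=1}^{p-1}\frac{(-1)^{k+1}}{k!} \pmod p.$$

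\textbf{Step 2 (Bell side).} Write $\mathbf{Bell}_{p-1}=\sum_{k=1}^{p-1}S(p-1,k)$ and use the explicit formula
$$S(p-1,k)=\frac{1}{k!}\sum_{j=0}^{k}(-1)^{k-j}\binom{k}{j}j^{p-1}.$$
For $1\le k\le p-1$ every $j$ with $1\le j\le k$ is prime to $p$, so Fermat gives $j^{p-1}\equiv 1$, while the $j=0$ term vanishes. The inner sum is therefore congruent to $\sum_{j=0}^k(-1)^{k-j}\binom{k}{j}-(-1)^k = -(-1)^k$. Hence $S(p-1,k)\equiv (-1)^{k+1}/k!$, and summing over $k$,
$$\mathbf{Bell}_{p-1}\equiv \sum_{k=1}^{p-1}\frac{(-1)^{k+1}}{k!}\pmod p.$$

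\textbf{Step 3 (conclusion).} Comparing Steps 1 and 2 gives $K_p\equiv \mathbf{Bell}_{p-1}-1 \pmod p$. Thus $p$ divides $K_p-\mathbf{Bell}_{p-1}+1$, and $\mathbb{G}_p\in\BZ$ for every prime $p\ge 3$. As a sanity check against Table~\ref{kp_bell}: for $p=3$ the numerator is $4-2+1=3$, and for $p=5$ it is $34-15+1=20$, both divisible by $p$.

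The only delicate point is Step 2. One must check that the $j=0$ term is correctly excluded, since $0^{p-1}=0$ rather than $1$, and that the index range $k\le p-1$ keeps every $k!$ invertible mod $p$. Everything else is bookkeeping with Wilson's theorem.
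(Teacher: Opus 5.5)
Your proof is correct and takes the same route as the paper: integrality of $\mathbb{G}_p$ is equivalent to the congruence $K_p\equiv \mathbf{Bell}_{p-1}-1 \pmod p$ (item 5 of Properties~\ref{Gt}), which the paper's proof simply takes from Gertsch, building it into Definition~\ref{quotient}, and backs with the OEIS data. The only difference is that you verify that congruence from scratch. Your Step~1 is the same Wilson inversion $1/k!\equiv(-1)^{k+1}(p-1-k)!$ that the paper uses for $\mathbf{Der}_{p-1}\equiv K_p$, and your Step~2 replaces the cited Touchard congruence with the direct computation $S(p-1,k)\equiv(-1)^{k+1}/k! \pmod p$; this makes your argument self-contained and independent of the paper's looser intermediate steps.
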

\begin{proof}
The proof of this is straightforward from definition \ref{quotient} and (c.f \cite{OEIS:A309483} see appendix \ref{secA1}).
    
\end{proof}

\begin{theorem}\label{NP}
     There are no known primes $p\geq 3$ (no Gertsch primes) for which the Gertsch quotient, $\mathbb{G}_p \equiv 0 \pmod{p}.$ 
\end{theorem}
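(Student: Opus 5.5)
Since $K_p\equiv \mathbf{Bell}_{p-1}-1 \pmod p$ by Properties~\ref{Gt}, the numerator $K_p-\mathbf{Bell}_{p-1}+1$ in Definition~\ref{quotient} is divisible by $p$, which is why $\mathbb{G}_p$ is an integer (Lemma~\ref{intg}). Hence
$$\mathbb{G}_p\equiv 0 \pmod p \iff K_p-\mathbf{Bell}_{p-1}+1\equiv 0 \pmod{p^2}.$$
The statement is of the form ``no known'', so the plan is not to prove a universal non-vanishing. Instead I will reduce it to a finite, verifiable computation modulo $p^2$, carry that computation out for all primes $3\le p\le N$ with $N$ as large as feasible, and record that no prime in range satisfies the condition. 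The value of $N$ will be made explicit, so that ``no known prime'' means ``none with $p\le N$''. I will cross-check the output against the tabulated values in \cite{OEIS:A309483} (Appendix~\ref{secA1}).

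\textbf{Step 1: $K_p \bmod p^2$.} Accumulate factorials $n!\bmod p^2$ for $n=0,\dots,p-1$ and sum them. This costs $O(p)$ multiplications modulo $p^2$.

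\textbf{Step 2: $\mathbf{Bell}_{p-1}\bmod p^2$.} Running the Bell triangle modulo $p^2$ would cost $O(p^2)$ operations per prime, which is too slow. I will instead use the finite Dobinski-type formula obtained from $S(n,k)=\frac1{k!}\sum_j(-1)^{k-j}\binom kj j^n$:
$$\mathbf{Bell}_{n}=\sum_{j=0}^{n}\frac{j^{n}}{j!}\sum_{i=0}^{n-j}\frac{(-1)^i}{i!}, \qquad n=p-1 .$$
All denominators are $i!,j!$ with $i,j\le p-1$, so they are units modulo $p^2$ and the formula holds verbatim in $\mathbb{Z}/p^2\mathbb{Z}$. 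The inner sums are prefix sums of $(-1)^i/i!$ and can be precomputed. The powers $j^{p-1}\bmod p^2$ are each computed by fast exponentiation; equivalently $j^{p-1}=1+p\,q_p(j)$, which links this step to the Fermat quotients of Theorem~\ref{atta}. The total cost is $O(p\log p)$ per prime.

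\textbf{Step 3: the test.} Form $K_p-\mathbf{Bell}_{p-1}+1 \bmod p^2$. Assert that it is $0\bmod p$, as a consistency check with Properties~\ref{Gt}. Then divide by $p$ to obtain $\mathbb{G}_p\bmod p$ and check that it is nonzero. The same loop also yields $W_p\bmod p$, so it can reproduce Theorem~\ref{G-Wil} as a sanity check on the code.

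The main obstacle is inherent: this only establishes the absence of Gertsch primes up to $N$, not for all $p$. Heuristically $\mathbb{G}_p\bmod p$ behaves like a random residue, so the expected number of Gertsch primes up to $N$ is about $\sum_{p\le N}1/p\sim\log\log N$. Their absence therefore cannot be proved by computation alone, and the theorem must be stated exactly as an empirical non-existence up to the verified bound. On the practical side, the bottleneck is the $O(p\log p)$ evaluation of $\mathbf{Bell}_{p-1}\bmod p^2$ for large $p$. If one wants to push $N$ further, I would first try to replace the per-$j$ exponentiations by a multiplicative sieve of $j\mapsto j^{p-1}\bmod p^2$ over the primes $j<p$.
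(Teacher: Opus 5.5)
Your proposal is sound for what is, in the end, an empirical statement, but it follows a different route from the paper. The paper's proof only says that ``the remainder of this paper gives much detail''. Apart from the table of $\mathbb{G}_p$ for $p\le 61$, the argument it has in mind appears in the proof of Theorem~\ref{atta}, Section~\ref{sec 8} and the proof of Corollary~\ref{st cond}. It runs as follows: (\textbf{KBW}) is read as giving $\mathbb{G}_p\equiv W_p \pmod p$ for every $p$, so a Gertsch prime would have to be a Wilson prime. The known Wilson primes $5,13,563$ are checked not to be Gertsch primes, and the Wilson-prime searches up to $2\times 10^{13}$ find no others.

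You instead test $\mathbb{G}_p \bmod p$ directly for each prime, via $K_p-\mathbf{Bell}_{p-1}+1 \bmod p^2$. Your finite Dobinski-type formula is valid in $\mathbb{Z}/p^2\mathbb{Z}$ because only $i!,j!$ with $i,j\le p-1$ are inverted.

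If the paper's route worked, it would inherit the huge Wilson-prime bound for free, but it does not work. The identity $K_p=\mathbf{Bell}_{p-1}+(p-1)!$ holds only modulo $p$: already for $p=5$ one has $\sum_{n\le 3}n!=10\neq 15=\mathbf{Bell}_4$. In fact
\[
\mathbb{G}_p-W_p=\frac{\sum_{n=0}^{p-2}n!-\mathbf{Bell}_{p-1}}{p},
\]
which is nonzero modulo $p$ except at the rare primes of Theorem~\ref{G-Wil} ($p=3,7,2887$ below $3000$). So there is no reduction to Wilson primes, and your direct search with an explicit bound $N$ is the appropriate way to support the statement.

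The price is reach. The total work is of order $N^2$, so $N$ will be modest. Also, as your own $\log\log N$ heuristic indicates, the outcome of Step~3 is genuinely open. The plan certifies the statement only up to the $N$ actually reached, and you should be prepared for it to produce a Gertsch prime instead. Relatedly, no finite search bears on the paper's later claim $\gaG\neq 0$ in $\A$, which is a statement about infinitely many primes.
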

\begin{proof}
    The remainder of this paper gives much detail about this.
\end{proof}

\begin{proposition} For   $K_p=\sum_{0\leq n\leq (p-1)} n!= 0! + 1! + 2! + \cdots + (p-2)! + (p-1)!$ the following hold:
\begin{enumerate}
\item $K_p \equiv \mathbf{Bell}_{p-1} + (pW_p - 1) \pmod{p^2}$
\item $pK_p \equiv p\mathbf{Bell}_{p-1} + (p^2W_p - p) \pmod{p^3}$
\end{enumerate}
where $W_p = \frac{(p-1)! + 1}{p}$, $W_p\equiv 0  \bmod{p}$ and $(p-1)! + 1\equiv 0 \mod{p^2}.$
\end{proposition}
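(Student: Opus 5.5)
The plan is to reduce both items to one mod-$p^2$ lift of Gertsch's congruence. Item 2 then follows by multiplying by $p$.

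\textbf{Step 1 (rewrite the Wilson term).} By the standing hypothesis $(p-1)!+1\equiv 0 \pmod{p^2}$ and the definition $W_p=\frac{(p-1)!+1}{p}$, we have exactly $pW_p-1=(p-1)!$. Hence item 1 is the same as
$$K_p-(p-1)!=\sum_{n=0}^{p-2} n!\equiv \mathbf{Bell}_{p-1}\pmod{p^2}.$$
The additional hypothesis $W_p\equiv 0\pmod p$ gives $pW_p\equiv 0\pmod{p^2}$. So item 1 can equivalently be written $K_p\equiv \mathbf{Bell}_{p-1}-1\pmod{p^2}$, which is the mod-$p^2$ form of Property~\ref{Gt}(5).

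\textbf{Step 2 (the key lift).} The target is $\sum_{n=0}^{p-2}n!\equiv \mathbf{Bell}_{p-1}\pmod{p^2}$. I would start from the Stirling expansion $\mathbf{Bell}_{p-1}=\sum_{k}S(p-1,k)$ and the derangement identity $\mathbf{Bell}_{p-1}(x)\equiv \mathbf{Der}_{p-1}(1-x)+1$ of Definition~\ref{Tch}. The goal is to track these expansions one power of $p$ further than the paper does. Concretely, I would write $\mathbf{Der}_{p-1}=(p-1)!\sum_{k=0}^{p-1}(-1)^k/k!$ and use $1/k!\equiv (-1)^{k+1}(p-1-k)!\,/\,\bigl((p-1)!\bigr)\cdot\bigl(1+p\,H\text{-type correction}\bigr)\pmod{p^2}$. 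Here the correction comes from $(p-1-k)!\,k!\equiv(-1)^{k+1}(1-pH_k)\pmod{p^2}$ for Wilson primes. This converts the derangement sum into $\sum n!$ plus $p$ times a harmonic-type sum.

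\textbf{Step 3 (item 2).} Multiplying the congruence of item 1 by $p$ gives item 2 immediately, since $a\equiv b\pmod{p^2}$ implies $pa\equiv pb\pmod{p^3}$.

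The main obstacle is Step 2, namely showing that the harmonic correction term vanishes mod $p$. Before attempting it, I would first test the statement numerically on the smallest Wilson prime $p=5$, where both hypotheses hold. There $\sum_{n\le 3}n!=10$ while $\mathbf{Bell}_4=15$. Equivalently, $K_5=34\equiv 9$ while $\mathbf{Bell}_4-1=14\pmod{25}$. So the correction term does not vanish, and the congruence of item 1 fails at $p=5$ under exactly the stated hypotheses.

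Consequently, I expect the argument can only be completed in one of two forms. The first is to prove the statement modulo $p$ only, where Step 2 is Property~\ref{Gt} and Step 3 gives the mod $p^2$ version of item 2. The second is to carry the explicit correction term $p\cdot(\text{harmonic sum})$ in both items. I would report the $p=5$ check alongside whichever form is proved.
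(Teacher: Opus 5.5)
Your conclusion is right: the proposition is false as stated, and your $p=5$ computation is a valid counterexample. The paper's one-line proof (``trivial from Definition~\ref{quotient} and Remark~\ref{wilson}'') does not hold up. Remark~\ref{wilson} is only a congruence modulo $p$, and the proof silently treats $K_p-\mathbf{Bell}_{p-1}\equiv (p-1)!$ as holding modulo $p^2$.

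You can make the obstruction fully explicit. Directly from the definitions of $\mathbb{G}_p$ and $W_p$ one has the exact identity
\[
K_p=\mathbf{Bell}_{p-1}+(pW_p-1)+p\,(\mathbb{G}_p-W_p).
\]
So item 1 holds if and only if $\mathbb{G}_p\equiv W_p\pmod p$. Item 2 is equivalent to item 1, since $pa\equiv pb\pmod{p^3}$ iff $a\equiv b\pmod{p^2}$; at $p=5$ it fails as $170\not\equiv 195\pmod{125}$. The condition $\mathbb{G}_p\equiv W_p\pmod p$ is exactly Theorem~\ref{G-Wil}, which the paper finds only at $p=3,7,2887$. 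In fact the paper's own Section~\ref{sec 8} derives the mod-$p^2$ congruence from $\mathbb{G}_p\equiv W_p\pmod p$, not unconditionally.

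Under the stated hypothesis $W_p\equiv 0\pmod p$, the proposition would make every Wilson prime a Gertsch prime. That contradicts the paper's own data: $\mathbb{G}_5=4$, $\mathbb{G}_{13}=39903286\equiv 7\pmod{13}$, and the explicit claim that $5,13,563$ are not Gertsch primes. A second direct check you could report is the Wilson prime $p=13$: there $\sum_{n=0}^{11}n!=43954714\equiv 11\pmod{169}$, but $\mathbf{Bell}_{12}=4213597\equiv 89\pmod{169}$.

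On your Step 2, which you were right to abandon, two small points.
For a Wilson prime the correct lift is $k!\,(p-1-k)!\equiv(-1)^{k+1}(1+pH_k)\pmod{p^2}$; for example, at $p=5$, $k=1$ one gets $1!\cdot 3!=6=1+5H_1$. So your sign is off.
More importantly, the route could not close anyway. The Bell--derangement relation in Definition~\ref{Tch} is only a congruence modulo $p$, so an uncontrolled correction also enters on the Bell side.

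The identity above already gives the exact error term $p(\mathbb{G}_p-W_p)$. The honest salvage is therefore either the mod-$p$ statement (Gertsch's congruence, which needs neither Wilson hypothesis) or that identity itself.
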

\begin{proof}
    From definition \ref{quotient} and remark \ref{wilson} the proof of this is trivial.
\end{proof}

\begin{proposition}
Let $K_p \equiv \mathbf{Bell}_{p-1} + (p-1)! \pmod{p}$ and $q_p(a) = \frac{a^{p-1}-1}{p}$ be the Fermat quotient then $$\sum_{a=1}^{p-1} q_p(a) \equiv \mathbb{G}_p $$ and 
$$\sum_{a=1}^{p-1}a^{p-1}- p-K_p + \mathbf{Bell}_{p-1}\equiv 0 \mod{p^3}$$
for $p\geq 3.$
\end{proposition}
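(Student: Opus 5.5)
The plan is to reduce both claims to the Wilson quotient, using Remark \ref{wilson} to replace $K_p-\mathbf{Bell}_{p-1}$ by $(p-1)!$ inside Definition \ref{quotient}. With that replacement,
$$\mathbb{G}_p=\frac{(p-1)!+1}{p}=W_p,$$
and the first congruence becomes Lerch's classical theorem $\sum_{a=1}^{p-1}q_p(a)\equiv W_p \pmod p$. I would reprove Lerch's theorem in the usual way, so that the argument is self-contained and its limits are visible. Write $a^{p-1}=1+p\,q_p(a)$ and multiply over $a=1,\dots,p-1$:
$$\prod_{a=1}^{p-1}a^{p-1}=\big((p-1)!\big)^{p-1}.$$
Put $(p-1)!=-1+pW_p$. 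Expanding the left side gives $1+p\sum_a q_p(a)+O(p^2)$. Expanding the right side binomially, using that $p-1$ is even, gives $1-(p-1)pW_p+O(p^2)$. Comparing the coefficients of $p$ yields $\sum_a q_p(a)\equiv -(p-1)W_p\equiv W_p \pmod p$, hence $\sum_a q_p(a)\equiv\mathbb{G}_p\pmod p$.

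For the second claim I would first compute the expression exactly. Summing $a^{p-1}=1+p\,q_p(a)$ over $a$ gives
$$\sum_{a=1}^{p-1}a^{p-1}-p=p\sum_a q_p(a)-1.$$
By Definition \ref{quotient}, $-K_p+\mathbf{Bell}_{p-1}=1-p\,\mathbb{G}_p$. Adding these, the whole expression equals exactly
$$p\Big(\sum_{a=1}^{p-1}q_p(a)-\mathbb{G}_p\Big).$$
Combined with the first part, this already shows the expression is $\equiv 0 \pmod{p^2}$.

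The main obstacle is the passage from $p^2$ to $p^3$. That step needs the strengthened statement $\sum_a q_p(a)\equiv \mathbb{G}_p=W_p \pmod{p^2}$. To attack it, I would push the product expansion above one order further in $p$. The $p^2$-coefficient then involves $\sum_{a<b}q_p(a)q_p(b)$ on the left and $\binom{p-1}{2}W_p^2$ on the right. I would also use the known relation $\sum_a q_p(a)^2\equiv -\mathbf{B}_{p-1}\cdot(\text{const})$-type identities, via the Glaisher-style evaluations $\sum_a a^{k}\bmod p^2$ in terms of Bernoulli numbers.

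My expectation is that this produces a correction term, roughly of the shape $\tfrac{p}{2}\big(W_p^2-\sum_a q_p(a)^2\big)$, and that this term does not vanish for general $p$. If a check at $p=3$ and $p=5$ confirms a nonzero correction, I would state the second congruence modulo $p^2$, which is what the argument unconditionally gives. I would then record the $p^3$ version as holding exactly when the Bernoulli-type correction vanishes, for instance for Wilson primes with the extra vanishing condition.

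A separate caveat concerns the first step. Identifying $K_p-\mathbf{Bell}_{p-1}$ with $(p-1)!$ is only valid modulo $p$ in general. The whole proof therefore depends on taking Definition \ref{quotient} with that normalization, as Remark \ref{wilson} suggests.
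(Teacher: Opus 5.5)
\textbf{The proposal has a genuine gap: it fails at its first step, and the statement itself is false.} The problem is not confined to the $p^3$ stage. You correctly note that Remark~\ref{wilson} identifies $K_p-\mathbf{Bell}_{p-1}$ with $(p-1)!$ only modulo $p$. That is already fatal for the first congruence, because $\mathbb{G}_p$ is obtained by dividing by $p$, so controlling $\mathbb{G}_p \bmod p$ requires the identification modulo $p^2$.

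Write $K_p=K_{p-1}+(p-1)!$ with $K_{p-1}=\sum_{n=0}^{p-2}n!$. Then Definition~\ref{quotient} gives exactly
\[
\mathbb{G}_p \;=\; W_p+\frac{K_{p-1}-\mathbf{Bell}_{p-1}}{p},
\]
and the last term is an integer that is in general not divisible by $p$. Your product argument does prove Lerch's congruence $\sum_a q_p(a)\equiv W_p\pmod p$. So the first claim is equivalent to $\mathbb{G}_p\equiv W_p\pmod p$, i.e.\ to $K_{p-1}\equiv \mathbf{Bell}_{p-1}\pmod{p^2}$. By Theorem~\ref{G-Wil} this happens only for $p=3,7,2887$ below $3000$. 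Concretely, at $p=5$ you get $\sum_{a=1}^{4}q_5(a)=0+3+16+51=70\equiv 0$, whereas $\mathbb{G}_5=(34-15+1)/5=4$. Definition~\ref{quotient} fixes $\mathbb{G}_p$ as this integer (see the appendix table: $\mathbb{G}_5=4\neq 5=W_5$). So there is no ``normalization'' under which $\mathbb{G}_p=W_p$; adopting one simply replaces the statement by Lerch's theorem.

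Your exact identity $\sum_{a}a^{p-1}-p-K_p+\mathbf{Bell}_{p-1}=p\bigl(\sum_a q_p(a)-\mathbb{G}_p\bigr)$ is correct and is the right tool. Combined with the display above, the expression equals $\mathbf{Bell}_{p-1}-K_{p-1}+p^2L_p$, with $L_p$ as in Definition~\ref{sondow}. This refutes the second claim as well:
\begin{itemize}
\item at $p=5$ the expression is $354-5-34+15=330$, which is not even divisible by $25$;
\item at $p=7$ it is $66493=49\cdot 1357$ with $1357\not\equiv 0\pmod 7$ (this is the paper's own $\mathcal{H}_7$).
\end{itemize}
So your fallback ``modulo $p^2$ unconditionally'' is also false for the actual $\mathbb{G}_p$. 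It holds only after your substitution $\mathbb{G}_p:=W_p$, where the expression becomes $p^2L_p$. In that setting the $p^3$ version holds exactly when $p\mid L_p$, so your suspicion about the $p^3$ step was justified.

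The paper's own proof makes the same move, so it does not supply the missing step. It transfers $q_p((p-1)!)\equiv\sum_a q_p(a)$ to $q_p(K_p-\mathbf{Bell}_{p-1})$ via a mod-$p$ congruence, although $q_p$ depends on the residue modulo $p^2$. It also quotes the Lerch congruence $\sum_a a^{p-1}-p-(p-1)!\equiv 0$ modulo $p^3$, where in general only $p^2$ is true. The proposition as stated is false; what survives is Lerch's theorem together with your exact identity.
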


\begin{proof}
From definition \ref{quotient} and using similar techniques from \cite{agoh2022congruences};
     \begin{align*}
    \sum_{a=1}^{p-1} q_p(a) &\equiv \frac{(K_p - \mathbf{Bell}_{p-1}) + 1}{p} \pmod{p}\\
q_p(K_p - \mathbf{Bell}_{p-1}) &\equiv \frac{(1 + p\mathbb{G}_p) - 1}{p} \\
q_p(p\mathbb{G}_p - 1) &= \frac{(p\mathbb{G}_p - 1)^{p-1} - 1}{p}\\
&= \sum_{j}^{p-1}\binom{p-1}{j}(-1)^{p-1-j}p^i (\mathbb{G}_p)^j\\
q_p((K_p - \mathbf{Bell}_{p-1}))=q_p(p\mathbb{G}_p - 1) &\equiv \frac{(1 + p\mathbb{G}_p) - 1}{p} \pmod{p}\\
q_p(p\mathbb{G}_p - 1) &\equiv \frac{p\mathbb{G}_p}{p} \pmod{p}\\
q_p(p\mathbb{G}_p - 1)& \equiv \mathbb{G}_p \pmod{p}\\
q_p(K_p - \mathbf{Bell}_{p-1})&\equiv\sum_{a=1}^{p-1} q_p(a) \mod{p}
    \end{align*}
for second part, we know that $\sum_{a=1}^{p-1}a^{p-1}- p-(p-1)!\equiv 0 \mod{p^3}$
if we fix $(p-1)! \equiv K_p - \mathbf{Bell}_{p-1} \pmod{p}$ as in remark \ref{wilson} then desired proof is completed.
\end{proof}

\begin{definition}\label{sondow}\cite{sondow2014lerch}
Let $ \sum_{a=1}^{p-1} q_p(a) \equiv W_p$ be the Lerch formula and $\sum_{a=1}^{p-1}a^{p-1}- p-(p-1)!\equiv 0 \mod{p^3}$, the Lerch quotient is given as
\begin{align*}
    L_p&=\dfrac{\sum_{a=1}^{p-1} q_p(a)- W_p}{p}\\
    &= \dfrac{\sum_{a=1}^{p-1}a^{p-1}- p-(p-1)!}{p^2}
\end{align*}
    for prime $p\geq 3.$
\end{definition}

\begin{definition}\label{Atta}
For primes $p\geq3$, we define a new identity related to the Gertsch quotient  $\mathbb{G}_p$ by 

    $$\mathcal{H}_p=\dfrac{\sum_{a=1}^{p-1} q_p(a)- \mathbb{G}_p}{p}$$
and from definition \ref{sondow} we see that 
   $$ \dfrac{\sum_{a=1}^{p-1}a^{p-1}- p-K_p + \mathbf{Bell}_{p-1}}{p^2}$$

\end{definition}

\begin{proposition}
For prime $p$, when the Lerch quotient $L_p=\mathcal{H}_p$, then 
$(p-1)!\equiv K_p - \mathbf{Bell}_{p-1}\pmod{p^3}$.
    
\end{proposition}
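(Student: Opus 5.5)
Setting $L_p=\mathcal{H}_p$ and clearing denominators gives the result directly. Rather than working with Fermat quotients, we use the closed forms. By Definition \ref{sondow},
\[
p^2 L_p = \sum_{a=1}^{p-1}a^{p-1}-p-(p-1)!,
\]
and by the second expression in Definition \ref{Atta},
\[
p^2\mathcal{H}_p=\sum_{a=1}^{p-1}a^{p-1}-p-K_p+\mathbf{Bell}_{p-1}.
\]
Subtracting the two identities cancels the power sum and the $-p$ term:
\[
p^2(L_p-\mathcal{H}_p)=K_p-\mathbf{Bell}_{p-1}-(p-1)!.
\]
Under the hypothesis $L_p=\mathcal{H}_p$ the left side vanishes, so $(p-1)! = K_p-\mathbf{Bell}_{p-1}$ holds as an exact integer identity. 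In particular it holds modulo $p^3$, which is the claim.

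The first step is to justify the two closed forms. For $L_p$, Definition \ref{sondow} gives $pL_p=\sum_{a=1}^{p-1} q_p(a)-W_p$. Multiplying by $p$ and substituting $pq_p(a)=a^{p-1}-1$ and $pW_p=(p-1)!+1$ gives
\[
p^2L_p=\sum_{a=1}^{p-1} a^{p-1}-(p-1)-(p-1)!-1,
\]
which is the stated form. For $\mathcal{H}_p$, the same manipulation uses $p\mathbb{G}_p=K_p-\mathbf{Bell}_{p-1}+1$ from Definition \ref{quotient}, which yields the second expression in Definition \ref{Atta}.

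The main obstacle is one of interpretation rather than computation. The statement is most natural if $L_p$ and $\mathcal{H}_p$ are read as rational numbers, or as integers where Lemma \ref{intg} applies. If instead they are read only as residues modulo $p$, the hypothesis $L_p\equiv\mathcal{H}_p \pmod p$ gives $p^2(L_p-\mathcal{H}_p)\equiv 0 \pmod{p^3}$. That is exactly the congruence $(p-1)!\equiv K_p-\mathbf{Bell}_{p-1}\pmod{p^3}$, so the same subtraction argument covers this reading too.

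Equivalently, the hypothesis amounts to $\mathbb{G}_p\equiv W_p \pmod{p^2}$, which connects the statement to Remark \ref{wilson}.
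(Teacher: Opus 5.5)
Your argument is correct and matches the paper's proof. Both equate the closed forms $\bigl(\sum_{a=1}^{p-1}a^{p-1}-p-(p-1)!\bigr)/p^2$ and $\bigl(\sum_{a=1}^{p-1}a^{p-1}-p-K_p+\mathbf{Bell}_{p-1}\bigr)/p^2$, cancel the common terms, and read off the exact equality $(p-1)!=K_p-\mathbf{Bell}_{p-1}$, which trivially gives the congruence modulo $p^3$. You additionally derive those closed forms from $pq_p(a)=a^{p-1}-1$, $pW_p=(p-1)!+1$ and $p\mathbb{G}_p=K_p-\mathbf{Bell}_{p-1}+1$, which the paper simply asserts; one small slip is that Lemma \ref{intg} concerns $\mathbb{G}_p$, not $\mathcal{H}_p$, and $\mathcal{H}_p$ is generally non-integral (e.g.\ $\mathcal{H}_5=66/5$).
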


\begin{proof}
Set $\mathcal{H}_p=L_p$, then we see that from definition \ref{sondow} and \ref{Atta}
    \begin{align*}
    \dfrac{\sum_{a=1}^{p-1} q_p(a)- \mathbb{G}_p}{p}&= \dfrac{\sum_{a=1}^{p-1} q_p(a)- W_p}{p}\\
     \dfrac{\sum_{a=1}^{p-1}a^{p-1}- p-K_p + \mathbf{Bell}_{p-1}}{p^2}&=\dfrac{\sum_{a=1}^{p-1}a^{p-1}- p-(p-1)!}{p^2}\\
    -K_p + \mathbf{Bell}_{p-1}&=-(p-1)!\\
   (p-1)!&\equiv K_p - \mathbf{Bell}_{p-1}
\end{align*}
\end{proof}

\begin{corollary}
   For odd prime $p=7$ the $\mathcal{H}_p$ number is $\mathcal{H}_7= 1357$.
\end{corollary}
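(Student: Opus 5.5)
The plan is to evaluate Definition \ref{Atta} directly at $p=7$. I will use its second form,
$$\mathcal{H}_p=\dfrac{\sum_{a=1}^{p-1}a^{p-1}- p-K_p + \mathbf{Bell}_{p-1}}{p^2},$$
and cross-check with the first form $\mathcal{H}_p=\bigl(\sum_{a=1}^{p-1} q_p(a)- \mathbb{G}_p\bigr)/p$. Every quantity involved is an explicit integer, so the corollary reduces to a finite computation.

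First I collect the inputs. From Table \ref{kp_bell}, $K_7=874$ and $\mathbf{Bell}_6=203$. The power sum is
$$\sum_{a=1}^{6}a^{6}=1+64+729+4096+15625+46656=67171.$$
The numerator of the second form is then $67171-7-874+203=66493$. Since $49\cdot 1357=67850-1357=66493$, the quotient is exactly $1357$.

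Next I check the result with the first form of Definition \ref{Atta}. For the Fermat quotients, $\sum_{a=1}^{6}q_7(a)=(67171-6)/7=9595$. By Definition \ref{quotient} the Gertsch quotient is $\mathbb{G}_7=(874-203+1)/7=96$, an integer, in line with Lemma \ref{intg}. Then $(9595-96)/7=9499/7=1357$, which agrees with the first computation.

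The only real risk is arithmetic. The main step to double-check is the power sum $\sum a^6$ and the value $K_7=874$, since everything else is a single division that the cross-check confirms.
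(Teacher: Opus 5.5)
Your proposal is correct and takes the same route as the paper: direct evaluation of Definition \ref{Atta} at $p=7$ (with $\sum_{a=1}^{6}a^6=67171$, $K_7=874$, $\mathbf{Bell}_6=203$, giving $66493/49=9499/7=1357$), a value the paper simply records in its table of quotients without a separate argument. Note that your two forms of $\mathcal{H}_p$ are algebraically identical, so the cross-check guards against arithmetic slips but is not an independent confirmation.
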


\begin{table}[h]
\centering
\renewcommand{\arraystretch}{1.5}
\begin{tabular}{@{}lcccc@{}}
\toprule
$p$ & $W_p$ & $L_p$ & $\mathbb{G}_{p}$ & $\mathcal{H}_p$ \\ \midrule
3 & 1 & 0 & 1 & 0 \\
5 & 5 & 13 & 4 & 13.2 \\
7 & 103 & 1,356 & 96 & 1,357 \\
\bottomrule
\end{tabular}
\caption{Fermat, Wilson, and Kurepa-Bell quotients. \cite{OEIS:A007540, sondow2014lerch, OEIS:A309483} and appendix \ref{secA1}}
\end{table}

\begin{question}

    Upon examining the quotients $W_p$, $L_p$, $\mathbb{G}_p$, and $\mathcal{H}_p$, it is evident that the sole prime for which all these quotients yield integers is $p=7$. The question arises: are there infinitely many primes for which these quotients are simultaneously integers?
\end{question}

\section{ Vladimirov atoms, $K_p$ and $W_p \pmod{p}$}\label{Bernoulli}
In 2002, V. S. Vladimirov \cite{TA} established a connection between left factorials and Bernoulli numbers. He gave new criteria equivalent to Kurepa's conjecture in terms of $p$-adic numbers and Bernoulli numbers. He proved the following congruence revealing the beautiful relationship between the left factorials and the Bernoulli numbers $\mathbf{B}_k$;
\begin{equation}
!p \sum_{k=0}^{p-2} (-1)^k \frac{\mathbf{B}_k}{k!} \equiv \sum_{m=1}^{(p-3)/2} \frac{\mathbf{B}_{2m}}{(2m)!} [!(2m)-1] \pmod{p}
\end{equation}
where $p=3, 5,\dots.$
In this section we shall revisit his equivalent conjecture and give a detailed proof with regards to Wilsons quotient, Agoh-Guiga conjecture, Miki identity, Gertsch quotient, Fermat quotient and some well-known properties of Bell-Touchard congruence and Bernoulli numbers. Furthermore, we make extensions to many areas etc. \cite{agoh2022congruences}

\begin{theorem}\label{VK}\cite{TA}
Let the Vladimirov atoms $V_p=\sum_{k=0}^{p-2} (-1)^k \frac{\mathbf{B}_k}{k!}$ and  $V_{p}^*=\sum_{k=0}^{p-2} \frac{\mathbf{B}_k}{k!}$, where 
\begin{align}\label{KP}
!p=K_p= \sum_{k=0}^{p-1} \frac{(-1)^{k+1} }{k!} \quad \mbox{and}\quad K_p\not\equiv 0 \pmod{p} \quad \mbox{for} \quad p=3,5,\cdots
\end{align}
then the following hold:
\begin{enumerate}
    \item if 
\begin{align}\label{Vlad}
    \sum_{m=1}^{(p-3)/2} \frac{\mathbf{B}_{2m}}{(2m)!} [!(2m)-1]\not\equiv 0 \pmod{p}
\end{align}
is true then the Kurepa conjecture \ref{KP} is true and the non-congruence
\begin{align}\label{Ben}
    V_p\not\equiv 0 \pmod{p}
\end{align}
is true for $p=5,7,\cdots.$ 
\item If the non-congruence \ref{Ben} is true, then the Kurepa conjecture \ref{KP} is equivalent to the non-congruence \ref{Vlad}.
\end{enumerate}
\end{theorem}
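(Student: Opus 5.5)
The whole argument is a formal consequence of Vladimirov's congruence
\[
!p\, V_p \equiv \sum_{m=1}^{(p-3)/2} \frac{\mathbf{B}_{2m}}{(2m)!}\,[!(2m)-1] \pmod{p},
\]
which I take as given from \cite{TA}, exactly as displayed at the start of this section. I write $R_p$ for the right-hand side and keep $V_p=\sum_{k=0}^{p-2}(-1)^k\mathbf{B}_k/k!$.

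First I will check that every quantity is a $p$-adic integer, so that congruences modulo $p$ make sense and $\Z/p\Z$ is a field. For $k\le p-2$ the factorial $k!$ is prime to $p$. By von Staudt--Clausen, the denominator of $\mathbf{B}_{2m}$ is divisible only by primes $q$ with $q-1\mid 2m$. Since $2m\le p-3<p-1$, the prime $p$ never divides that denominator. Hence $V_p$, $V_p^*$ and each term of $R_p$ lie in $\Z_{(p)}$.

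Both parts then follow from the elementary fact that a product in $\Z/p\Z$ is nonzero exactly when both factors are nonzero.

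\textbf{Part 1.} Suppose $R_p\not\equiv 0 \pmod p$, which is \eqref{Vlad}. Then $K_p V_p \equiv R_p \not\equiv 0$. A product can only be nonzero if neither factor is zero, so $K_p\not\equiv 0$, which is the Kurepa conjecture \eqref{KP}, and $V_p\not\equiv 0$, which is \eqref{Ben}. The restriction to $p\ge5$ is there because for $p=3$ the sum defining $R_p$ is empty. In that case $R_3=0$, so the hypothesis \eqref{Vlad} is vacuous, and I will remark on this separately.

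\textbf{Part 2.} Suppose $V_p$ is invertible modulo $p$. Then $K_p \equiv V_p^{-1} R_p$, so $K_p\not\equiv 0$ holds if and only if $R_p\not\equiv 0$. This is the claimed equivalence.

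I would also note that the expression for $K_p$ given in \eqref{KP} is the derangement form. By Property \ref{Gt} it agrees with $K_p$ modulo $p$, so it can be used interchangeably in the congruence above.

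The only genuine obstacle is Vladimirov's congruence itself, if one wants to re-derive it rather than cite it. For that I would:
\begin{enumerate}
\item write $K_p \equiv \sum_{k} (-1)^{k+1}/k!$ using the derangement form;
\item multiply by the truncated series $\sum_k (-1)^k \mathbf{B}_k/k!$, which is the truncation of $t/(e^t-1)$ evaluated at $-t$;
\item use the exponential generating function identity $\frac{t}{e^t-1}\cdot\frac{e^t-1}{t}=1$ coefficientwise modulo $p$ to collapse the Cauchy product;
\item rewrite the surviving terms via $!(2m)$, and use that $\mathbf{B}_{\mathrm{odd}}=0$ beyond $\mathbf{B}_1$.
\end{enumerate}
Tracking the truncation at degree $p-2$ is where I expect the difficulty, so in the write-up I will cite \cite{TA} for this step.
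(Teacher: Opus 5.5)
Your argument is correct. Its core is Vladimirov's congruence $K_pV_p\equiv R_p \pmod p$, together with $p$-integrality via von Staudt--Clausen and the fact that a product in $\mathbb{F}_p$ is nonzero exactly when both factors are nonzero. That is precisely the mechanism behind the cited theorem.

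The paper's route is different. It says the theorem will be proved through lemmas that use Agoh's congruences to rewrite the atoms as $V_p\equiv W_p+2$ and $V_p'\equiv W_p+\tfrac12$. It then re-derives the congruence by summing the Bernoulli recurrence, and recasts everything in Wilson-quotient form (Theorem~\ref{Francis}).

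Your direct route does not depend on that Wilson-quotient layer, and this is a real advantage: $V_p\equiv W_p+2$ already fails at $p=7$. There $V_7=1+\tfrac12+\tfrac1{12}-\tfrac1{720}=\tfrac{1139}{720}\equiv 2 \pmod 7$, whereas $W_7+2=105\equiv 0 \pmod 7$. Meanwhile $K_7V_7\equiv 5\equiv R_7$, as Vladimirov's congruence predicts.

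What the paper's recurrence computation does offer is a self-contained proof of the congruence, and it removes the truncation difficulty you anticipate. Multiply $\sum_{k=0}^{n-1}\binom{n}{k}\mathbf{B}_k=0$ by $(-1)^n/n!$, sum over $2\le n\le p-1$, swap the sums, and use $j!\,(p-1-j)!\equiv(-1)^{j+1}\pmod p$. The inner sums become $-K_p$ for $k=0$ and $-(K_p-!k)-1$ for $k\ge1$. This gives $K_pV_p\equiv\sum_{k=1}^{p-2}(-1)^k\frac{\mathbf{B}_k}{k!}(!k-1)$, in which the $k=1$ term vanishes because $!1=1$ and only even $k$ survive. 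So you can replace the citation with a few lines.

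One wording fix. For $p=3$ the hypothesis of Part 1 is false rather than vacuous. Also, the global equivalence in Part 2 should be read over $p\ge5$, since $K_3\not\equiv0$ while $R_3=0$; this is consistent with $V_3=\tfrac32\equiv0\pmod 3$.
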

To better see the validity of this theorem we ask the following questions:
    \begin{itemize}
        \item Does $ \sum_{m=1}^{(p-3)/2} \frac{\mathbf{B}_{2m}}{(2m)!}\pmod{p}$ vanish for $p\geq 3$?

\item Similarly, does $ V_p=\sum_{k=0}^{p-2} (-1)^k \frac{\mathbf{B}_k}{k!}\pmod{p} $ vanish?
    \end{itemize}

\begin{corollary}\label{Vp}\cite{TA}
Let $ V_{p}^{'}=\sum_{m=1}^{(p-3)/2} \frac{\mathbf{B}_{2m}}{(2m)!}$ for $p=3,5,\cdots$, then
\begin{align*}
    V_p=\frac{3}{2}+ V_{p}^{'} \quad \mbox{and} \quad V_{p}^{*}=\frac{1}{2}+ V_{p}^{'}.
\end{align*}
where $V_{3}^{'}=0$ and $V_{5}^{'}=\frac{1}{12}$.   
\end{corollary}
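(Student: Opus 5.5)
The plan is to split the two Vladimirov atoms $V_p$ and $V_p^{*}$ of Theorem \ref{VK} by the parity of the index $k$, using two standard facts about Bernoulli numbers: $\mathbf{B}_{2j+1}=0$ for all $j\geq 1$, and the low values $\mathbf{B}_0=1$, $\mathbf{B}_1=-\tfrac12$, $\mathbf{B}_2=\tfrac16$. Since $p$ is odd, the upper limit $p-2$ is odd. So in the range $0\le k\le p-2$ the only surviving terms are $k=0$, $k=1$, and the even indices $k=2m$ with $1\le m\le (p-3)/2$.

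First step: isolate the $k=0$ and $k=1$ terms. In $V_p$ the $k=0$ term is $\mathbf{B}_0/0!=1$, and the $k=1$ term is $(-1)^1\mathbf{B}_1/1!=\tfrac12$. In $V_p^{*}$ the $k=0$ term is again $1$, while the $k=1$ term is $\mathbf{B}_1=-\tfrac12$.

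Second step: handle the remaining terms. All odd $k\ge 3$ vanish. For even $k=2m$ the sign $(-1)^{2m}=1$, so the even parts of $V_p$ and $V_p^{*}$ are both equal to
\begin{align*}
\sum_{m=1}^{(p-3)/2}\frac{\mathbf{B}_{2m}}{(2m)!}=V_p^{'}.
\end{align*}
Adding the pieces gives $V_p=1+\tfrac12+V_p^{'}=\tfrac32+V_p^{'}$ and $V_p^{*}=1-\tfrac12+V_p^{'}=\tfrac12+V_p^{'}$, as claimed.

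Third step: check the special values. For $p=3$ the range $1\le m\le 0$ is empty, so $V_3^{'}=0$. For $p=5$ only $m=1$ contributes, giving $V_5^{'}=\mathbf{B}_2/2!=\tfrac{1}{12}$.

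The only real obstacle is the convention for $\mathbf{B}_1$. With the convention $\mathbf{B}_1=+\tfrac12$, the constants $\tfrac32$ and $\tfrac12$ would be interchanged between $V_p$ and $V_p^{*}$. I would therefore state explicitly that the generating function $t/(e^t-1)$, i.e.\ $\mathbf{B}_1=-\tfrac12$, is used, since this is the convention that matches the statement. Everything else is bookkeeping of the index range.
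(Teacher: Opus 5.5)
Your proof is correct: splitting the range $0\le k\le p-2$ by parity and using $\mathbf{B}_0=1$, $\mathbf{B}_1=-\tfrac12$ and $\mathbf{B}_{2j+1}=0$ for $j\ge1$ is the natural direct verification of this identity. The paper itself gives no proof and only cites Vladimirov. Your convention worry is settled within the paper, which fixes $\mathbf{B}_1=-\tfrac12$ in the subsection on Vladimirov's analogue; this is also consistent with the two sums in Theorem~\ref{Ag} differing by exactly $-2\mathbf{B}_1=1$.
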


\begin{Proper}\label{prop all}
The following identities hold  for an odd prime $p$ and an integers $n,k,m> 0$ with $p\nmid m$:
\begin{enumerate}
\item $p(p+1)\mathbf{B}_{p-1}\equiv (p-1)! \pmod{p^2}$ \quad (L. Carlitz \cite{carlitz1952some, carlitz1961staudt}).

 \item 
  
       $$ mW_p\equiv \mathbf{B}_{m(p-1)} + \frac{1}{p}-1 \pmod{p}$$ \quad (Agoh \cite{agoh2022congruences}).

    \item 

        $$W_p\equiv \mathbf{B}_{p-1} + \frac{1}{p}-1 \pmod{p}$$ \quad (Glaisher, Beeger \cite{glaisher1900residues, beeger1913quelques, glaisher1900residues1}).

\item 
   
        $$(n-k)W_p\equiv \mathbf{B}_{n(p-1)} - \mathbf{B}_{k(p-1)} \pmod{p}$$ \quad (Agoh \cite{agoh2022congruences}).

 \item 

       $$ W_p\equiv \mathbf{B}_{2(p-1)} + \mathbf{B}_{p-1} \pmod{p}$$ \quad (Lehmer \cite{lehmer1938congruences}).

    
\end{enumerate}

\end{Proper}

\begin{theorem}\label{Ag}\cite{agoh2022congruences}
    For an odd prime $p$ and an integer $m\geq 1$ with $p\nmid m$, we have 
  \begin{enumerate}
      \item $$\sum_{k=1}^{p-2}  \frac{1}{m^k}\frac{\mathbf{B}_k}{k!}\equiv W_p +q_p(m) \pmod{p}$$
      
\item $$\sum_{k=1}^{p-2}  \frac{(-1)^k}{m^k}\frac{\mathbf{B}_k}{k!}\equiv W_p +q_p(m)+ \frac{1}{m} \pmod{p}$$

\item specifically if $m=1$, 
$$\sum_{k=1}^{p-2}  \frac{\mathbf{B}_k}{k!}\equiv W_p \pmod{p}$$

\item $$\sum_{k=1}^{p-2}  (-1)^k\frac{\mathbf{B}_k}{k!}\equiv W_p + 1 \pmod{p}$$

\item For any fixed integer $n$ with $1\leq n \leq p-1$, we have
$$\sum_{k=1}^{p-2}  H_n^{(k)}\frac{\mathbf{B}_k}{k!}\equiv nW_p +q_p(n!) \pmod{p}$$
where the generalized harmonic number of order $k\geq 1$ $H_n^{(k)}:= \sum_{m=1}^{n}\frac{1}{m^k} $ for $n\geq 1.$
      
  \end{enumerate} 
  
\end{theorem}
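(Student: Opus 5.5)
The plan is to derive all five congruences from one generating-function identity for the Bernoulli numbers, reduced modulo $p$. The constant term is handled by the Glaisher--Beeger congruence $W_p\equiv \mathbf{B}_{p-1}+\frac1p-1 \pmod p$ (item 3 of Properties \ref{prop all}). The Fermat quotient $q_p(m)$ should come from a multiplication (Raabe) formula.

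\emph{Truncation.} Since $\mathbf{B}_k/k!$ is $p$-integral for $0\le k\le p-2$, and the Bernoulli polynomials are Appell, I would start from
$$\frac{\mathbf{B}_{p-1}(x)}{(p-1)!}=\sum_{k=0}^{p-1}\frac{\mathbf{B}_k}{k!}\,\frac{x^{p-1-k}}{(p-1-k)!}.$$
In this identity only the term $k=p-1$ has a denominator divisible by $p$. I would isolate that term and reduce the rest modulo $p$ using Wilson's theorem in the form
$$k!\,(p-1-k)!\equiv(-1)^{k+1}\pmod p,$$
so that $1/(p-1-k)!\equiv(-1)^{k+1}k!$. This is the step that transfers factorials between the weights.

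\emph{Source of $q_p(m)$.} I would apply Raabe's multiplication formula
$$\mathbf{B}_{p-1}(mx)=m^{p-2}\sum_{j=0}^{m-1}\mathbf{B}_{p-1}\!\left(x+\tfrac jm\right),$$
with $x$ specialised so that the weights become $m^{-k}$. The leftover factor $m^{p-2}=\frac{1}{m}\bigl(1+p\,q_p(m)\bigr)$ then produces $q_p(m)$ once it multiplies the singular term $\mathbf{B}_{p-1}\sim -1/p$. The isolated $\mathbf{B}_{p-1}$ and the Wilson factor $(p-1)!\equiv -1+pW_p$ together produce $W_p$, using Glaisher--Beeger. The cleaner route may be to work directly with $t/(e^{t}-1)$ evaluated at $t/m$ and truncated at degree $p-2$, comparing $\sum_{j=0}^{m-1}e^{jt}$ with $(e^{mt}-1)/(e^t-1)$ modulo $(p,t^{p-1})$.

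\emph{Deriving the remaining items.} Part 1 would be the output of this computation. Part 2 follows from Part 1, since $\mathbf{B}_k=0$ for odd $k\ge3$: the alternating sum differs from the plain sum only at $k=1$, by $2\cdot\frac{1}{m}\cdot\frac12=\frac1m$. Parts 3 and 4 are the case $m=1$, where $q_p(1)=0$. Part 5 would come from summing Part 1 over $m=1,\dots,n$ with weights $m^{-k}$, which builds $H_n^{(k)}$. Here $\sum_{m\le n}q_p(m)\equiv q_p(n!)\pmod p$ by the logarithmic property $q_p(ab)\equiv q_p(a)+q_p(b)$, and the $W_p$ terms add up to $nW_p$.

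The main obstacle is the first step: making the Wilson-type swap $1/(p-1-k)!\leftrightarrow(-1)^{k+1}k!$ produce exactly the weights $\mathbf{B}_k/(k!\,m^k)$, and not $\mathbf{B}_k\,k!$ or $\mathbf{B}_k/k$. If the Appell identity does not give this directly, I would fall back on the Miki-type convolution that Agoh uses \cite{agoh2022congruences}. That means expanding $\frac{t}{e^t-1}\cdot\frac{e^{t}-1}{t}=1$ modulo $(p,t^{p-1})$ and matching coefficients against the power sums $\sum_{a=1}^{p-1}a^{p-1}=p-1+p\sum_a q_p(a)$, which ties in Lerch's formula $\sum_a q_p(a)\equiv W_p$.
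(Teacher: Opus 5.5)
\textbf{The target statement is false as printed, so no proof along these lines can succeed.} The paper gives no proof of its own here; it only quotes Agoh. Take $p=7$, $m=1$:
\[
\sum_{k=1}^{5}\frac{\mathbf{B}_k}{k!}=-\frac12+\frac1{12}-\frac1{720}=-\frac{301}{720}\equiv 0 \pmod 7,
\]
since $301=7\cdot 43$. But $W_7=103\equiv 5\pmod 7$. The alternating sum is $\frac{419}{720}\equiv 1$, not $W_7+1\equiv 6$. So items 3 and 4, and with them items 1 and 5, fail with weights $\mathbf{B}_k/k!$.

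The obstacle you flag in your last paragraph is therefore not technical, and your first step already shows why. The swap $\frac{\mathbf{B}_k}{k!}\cdot\frac{1}{(p-1-k)!}\equiv(-1)^{k+1}\mathbf{B}_k$ cancels the $k!$ completely. Evaluating $\mathbf{B}_{p-1}(x)$ at $x=m$, where $\mathbf{B}_{p-1}(m)=\mathbf{B}_{p-1}+(p-1)\sum_{a<m}a^{p-2}$, only gives $\sum_{k=0}^{p-2}(-1)^k\mathbf{B}_k m^{-k}\equiv -H_{m-1}\pmod p$, with no $W_p$ or $q_p(m)$. Those quotients are second $p$-adic digits. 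They appear only after an exact identity is divided by $p$, and that division produces weights $1/k$, not $1/k!$. Likewise, Raabe's formula for $\mathbf{B}_{p-1}$ produces $q_p(m)$ only as $m^{p-2}\sum_{j<m}\bigl(\mathbf{B}_{p-1}(j/m)-\mathbf{B}_{p-1}\bigr)$, not with weights $m^{-k}$.

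\textbf{What is true and how to prove it.} The correct statement, which is Agoh's, has $\mathbf{B}_k/k$ in place of $\mathbf{B}_k/k!$, for example $-\frac12+\frac1{12}-\frac1{120}=-\frac{51}{120}\equiv 5\equiv W_7\pmod 7$. It also needs $1\le m\le p-1$. For $m=p+1$ the left side equals its value at $m=1$, while $q_p(p+1)\equiv -1\pmod p$, so item 1 fails for every $p$ otherwise.

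To prove the corrected version, use $\mathbf{B}_p(x)$ instead of $\mathbf{B}_{p-1}(x)$. You need three facts: $\binom pk\equiv(-1)^{k-1}\frac pk\pmod{p^2}$ for $1\le k\le p-1$, $\mathbf{B}_p=0$, and $\mathbf{B}_p(m)=p\sum_{a=1}^{m-1}a^{p-1}$. Expanding $\mathbf{B}_p(m)$ and dividing by $p$ gives
\[
\sum_{a=1}^{m-1}a^{p-1}-m\Bigl(q_p(m)+\frac{p\mathbf{B}_{p-1}+1}{p}\Bigr)\equiv m\sum_{k=1}^{p-2}\frac{(-1)^{k-1}\mathbf{B}_k}{k\,m^{k}}\pmod p .
\]
For $m\le p-1$ the first sum is $\equiv m-1$. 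The Glaisher--Beeger congruence $W_p\equiv\mathbf{B}_{p-1}+\frac1p-1$ says $\frac{p\mathbf{B}_{p-1}+1}{p}\equiv W_p+1$. Substituting yields exactly item 2 with weights $\mathbf{B}_k/(k\,m^k)$.

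Your reductions of the other items then carry over verbatim and are correct: item 1 from item 2 by the vanishing of odd $\mathbf{B}_k$ for $k\ge 3$, items 3 and 4 as the case $m=1$, and item 5 by summing over $m\le n$ with $q_p(ab)\equiv q_p(a)+q_p(b)$.
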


\begin{theorem}[Agoh]\cite{agoh2022congruences, agoh2014convolution}
For integers $n, m \geq 1$, we have
\begin{enumerate}
\item 
\begin{align}
    \sum_{k=1}^{n-1} (m)^{n-k} \frac{\mathbf{B}_k}{k!}- \sum_{k=1}^{n-1}\left(\begin{array}{c} n\\ k \end{array}\right) (m)^{n-k} \frac{\mathbf{B}_k}{k!}=  \sum_{j=1}^{m-1} \frac{(m-j)^n}{j}+ m^n(H_n -H_m)  
\end{align}
   if we set $ m=1$ in the above we obtain
\item 
\begin{align}
    \sum_{k=1}^{n-1}  \frac{\mathbf{B}_k}{k!}- \sum_{k=1}^{n-1}\left(\begin{array}{c} n\\ k \end{array}\right)\frac{\mathbf{B}_k}{k!}=  H_n - 1  
\end{align}
    
\end{enumerate}

\end{theorem}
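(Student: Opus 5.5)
The plan is a direct algebraic verification. I expand the right-hand side by the binomial theorem, evaluate the resulting power sums with Faulhaber's formula, and match the Bernoulli terms against the left-hand side. The harmonic terms should cancel or combine into $m^n(H_n-H_m)$.

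\emph{Step 1: expand the sum over $j$.} For $1\le j\le m-1$, the binomial theorem gives
\begin{align*}
\frac{(m-j)^n}{j}=\frac{m^n}{j}+\sum_{i=1}^{n}\binom{n}{i}m^{n-i}(-1)^i j^{i-1}.
\end{align*}
Summing over $j$, the first term contributes $m^nH_{m-1}$. The second contributes $\sum_{i=1}^n\binom{n}{i}(-1)^im^{n-i}S_{i-1}(m)$, where $S_r(m)=\sum_{j=1}^{m-1}j^r$ (note $S_0(m)=m-1$). Combined with $m^n(H_n-H_m)$, the harmonic terms reduce to $m^nH_n-m^{n-1}$. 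I then use the classical identity $H_n=\sum_{i=1}^n\binom{n}{i}\frac{(-1)^{i-1}}{i}$ to rewrite $m^nH_n$ as a binomial sum of the same shape as the power-sum terms.

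\emph{Step 2: apply Faulhaber's formula.} Substitute
\begin{align*}
S_{i-1}(m)=\frac1i\sum_{k=0}^{i-1}\binom{i}{k}\mathbf{B}_k m^{i-k}
\end{align*}
into the expression from Step 1. This produces a double sum over $i$ and $k$ in which every term carries the power $m^{n-k}$. I then interchange the order of summation and write the coefficient of each $m^{n-k}\mathbf{B}_k$ as an inner alternating binomial sum over $i$. The $k=0$ terms must cancel against $m^nH_n-m^{n-1}$. For $1\le k\le n-1$, I will evaluate the inner sum with $\binom{n}{i}\binom{i}{k}=\binom{n}{k}\binom{n-k}{i-k}$ and a standard alternating-sum identity. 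The target is a coefficient of the form $\frac{1}{k!}\bigl(1-\binom{n}{k}\bigr)$, which matches the left-hand side of the theorem. The case $m=1$, i.e.\ the second displayed identity, then follows immediately: there the sum over $j$ is empty and $H_m=1$.

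\emph{Main obstacle.} The hardest part is the bookkeeping in Step 2: the index shifts, the boundary terms at $k=0$ and $k=n$, and the sign of $\mathbf{B}_1$. A more basic concern is whether the normalization $\mathbf{B}_k/k!$, combined with $\binom nk$, is exactly the form the computation produces, since Miki-type identities are often stated with $\mathbf{B}_k/k$. Before the full computation I will check small cases. For $m=1$ and $n=2,3$ the stated identity holds: both sides equal $\tfrac12$ and $\tfrac56$ respectively. However, these cases cannot distinguish $\mathbf{B}_k/k!$ from $\mathbf{B}_k/k$, because $k!=k$ for $k\le2$. I will therefore test $n=4$ and $m=2$ numerically first, and fix the normalization to whatever Faulhaber's formula forces. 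If the direct coefficient extraction becomes unwieldy, I will instead compare exponential generating functions in $n$. Then $\sum_n\frac{(m-j)^n t^n}{n!}$ and $\frac{t}{e^t-1}$ make the convolution structure explicit, and the identity reduces to comparing coefficients of $t^n$.
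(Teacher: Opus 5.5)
Your method (binomial expansion, Faulhaber's formula, interchange of summation) is sound, and your worry about the normalization is exactly the problem: the statement as printed is false, so the step ``the target is a coefficient of the form $\frac{1}{k!}\bigl(1-\binom{n}{k}\bigr)$, which matches the left-hand side'' can never be completed.

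In Step 2 the coefficient of $\mathbf{B}_k m^{n-k}$ is $\sum_{i=k+1}^{n}(-1)^i\binom{n}{i}\binom{i}{k}\frac{1}{i}$. Two facts evaluate it:
\begin{itemize}
\item $\binom{i}{k}\frac{1}{i}=\frac{1}{k}\binom{i-1}{k-1}$;
\item $\sum_{i=k}^{n}(-1)^i\binom{n}{i}\binom{i-1}{k-1}=(-1)^k$ for $1\le k\le n$.
\end{itemize}
So the coefficient equals $\frac{(-1)^k}{k}\bigl(1-\binom{n}{k}\bigr)$. The denominator Faulhaber's formula produces is $k$, never $k!$.

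The bookkeeping then runs as follows.
\begin{itemize}
\item The $k=0$ coefficient is $-H_n$. It cancels $m^nH_n$ only, not $m^nH_n-m^{n-1}$.
\item The leftover $-m^{n-1}$ is absorbed at the boundary $i=1$. There Faulhaber's formula gives $m$, whereas your $S_0(m)=m-1$; the difference contributes $+nm^{n-1}$.
\item Together these turn the $k=1$ term $\frac{1}{2}(1-n)m^{n-1}$ into $\mathbf{B}_1(1-n)m^{n-1}=\frac{n-1}{2}m^{n-1}$, with $\mathbf{B}_1=-\frac{1}{2}$.
\item For even $k$ the sign $(-1)^k$ is harmless, and the odd $k\ge 3$ drop out.
\end{itemize}
What your argument actually proves is therefore
\[
\sum_{k=1}^{n-1} m^{n-k}\frac{\mathbf{B}_k}{k}-\sum_{k=1}^{n-1}\binom{n}{k}m^{n-k}\frac{\mathbf{B}_k}{k}=\sum_{j=1}^{m-1}\frac{(m-j)^n}{j}+m^n(H_n-H_m).
\]
The $k!$ in the statement is evidently a slip for $k$. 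The paper gives no proof of its own to compare with, only the citation to Agoh.

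The printed version already fails for $m=1$, $n=5$. Its left side is $\frac{\mathbf{B}_1}{1!}(1-5)+\frac{\mathbf{B}_2}{2!}(1-10)+\frac{\mathbf{B}_4}{4!}(1-5)=2-\frac{3}{4}+\frac{1}{180}=\frac{113}{90}$, while $H_5-1=\frac{77}{60}$. Using $\mathbf{B}_4/4=-\frac{1}{120}$ in place of $\mathbf{B}_4/4!=-\frac{1}{720}$ gives exactly $\frac{77}{60}$.

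Your planned check at $n=4$, $m=2$ would not reveal this. Because $\mathbf{B}_3=0$, only $k\le 2$ contribute when $n\le 4$, and there $k!=k$; both versions give $\frac{31}{3}$. The first discriminating case is $n=5$, through $k=4$.

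So the proposal should conclude that the identity holds with $\mathbf{B}_k/k$, and so does its $m=1$ specialization $\sum_{k=1}^{n-1}\bigl(1-\binom{n}{k}\bigr)\frac{\mathbf{B}_k}{k}=H_n-1$. The statement as written cannot be proved.
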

    
\begin{conjecture}\label{conj9}[Agoh \cite{agoh1995giuga}]
    An integer $p>1$ is an odd prime number if and only if
    $$p\mathbf{B}_{p-1}\equiv -1 \pmod{p}$$
    where $\mathbf{B}_{p-1}$ is the even Bernoulli number.
\end{conjecture}

\begin{conjecture}[Giuga \cite{giuga1950presumibile, borwein1996giuga}]
    An integer, $n>1$, is prime if and only if
    $$s_n:= \sum_{k=1}^{n-1}k^{n-1}\equiv -1 \pmod{n}.$$

\end{conjecture}

\subsection{Kurepa in Galois Field and the Derangement modulo prime}
Mijajlović in \cite{mijajlovic1990some} in his attempt to verify the left factorials by computers made some interesting observations about the Kurepa factorial in the Galois field of $p$ elements $\mathbf{GF}(p)$. In particular, If $p$ is a prime $\geq3$ then in  $\mathbf{GF}(p)$ we have 
\begin{align}
    K_p&= \sum_{k=0}^{p-1} (-1)^{k+1}\frac{1}{k!}\\
    K_p&=\sum_{k=0}^{p-1} (-1)^{k+1}(k+1)(k+2)\cdots(p-1).
\end{align}
He added that the Kurepa conjecture is equivalent to verifying for all primes $\mathbf{GF}(p)$ that
    $$K_p= \sum_{k=0}^{p-1} (-1)^{k+1}\frac{1}{k!}\not\equiv 0 \pmod{p}$$
In property \ref{Gt}, we know Gertsch \cite{gertsch1996some} gave the connection between the Kurepa conjecture and derangement modulo prime $\mathbf{Der}_{p-1}$. Specifically, she proposed that
$$\mathbf{Der}_{p-1} = (p-1)! \sum_{k=0}^{p-1} \frac{(-1)^k}{k!}$$
one immediate sees 
\begin{align*}
    \mathbf{Der}_{p-1} = (p-1)! \sum_{k=0}^{p-1} \frac{(-1)^k}{k!}&\equiv\sum_{k=0}^{p-1} \frac{(-1)^k}{k!}(p-1)!\pmod{p}\\
    &\equiv\sum_{k=0}^{p-1} \frac{(-1)^k}{k!}(-1)\pmod{p}\\
     &\equiv\sum_{k=0}^{p-1} \frac{(-1)^{k+1}}{k!}\pmod{p}\\
&\equiv \sum_{k=0}^{p-1} (-1)^{k+1} \left[ (-1)^{k+1} (p-1-k)! \right] \pmod{p}\\
&\equiv \sum_{k=0}^{p-1}  (p-1-k)! \pmod{p}\\
\mathbf{Der}_{p-1}&\equiv \sum_{k=0}^{p-1}  k!  \pmod{p}=K_p \pmod{p}.
\end{align*}
Gertsch also proposed $\mathbf{Der}_{p-1}\not\equiv 0 \pmod{p}$ which is the same as the former. 
Now one can easily see the connection $\mathbf{Der}_{p-1}\equiv K_p \pmod{p}$ has with the Bell numbers
    $$\mathbf{Der}_{p-1}\equiv K_p \pmod{p}\equiv \mathbf{Bell}_{p-1} - 1\pmod{p}$$
following definition \ref{quotient} we have 
\begin{corollary}
For all prime $p\geq 3,$ the Gertsch quotient 
    $$\mathbb{G}_{p} = \frac{(\mathbf{Der}_{p-1} - \mathbf{Bell}_{p-1}) + 1}{p}.$$
\end{corollary}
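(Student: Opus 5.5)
The plan is to obtain the stated expression by substituting the derangement form of the Kurepa factorial into Definition \ref{quotient}. In that definition $K_p$ enters only through the congruence $K_p\equiv \mathbf{Bell}_{p-1}-1 \pmod{p}$ of Property \ref{Gt}. The computation just carried out in this subsection gives $\mathbf{Der}_{p-1}\equiv K_p \pmod{p}$: we wrote $\mathbf{Der}_{p-1}=(p-1)!\sum_{k=0}^{p-1}(-1)^k/k!$, applied Wilson's theorem to $(p-1)!$, and used $1/k!\equiv(-1)^{k+1}(p-1-k)!\pmod p$.

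The first step is to record the chain
\[
\mathbf{Der}_{p-1}\equiv K_p\equiv \mathbf{Bell}_{p-1}-1 \pmod{p},
\]
which is Property \ref{Gt}(1),(3),(5). This shows that $\mathbf{Der}_{p-1}$ is a legitimate representative of the same residue class that Definition \ref{quotient} assigns to $K_p$. In particular, every hypothesis the definition places on $K_p$ is equally satisfied by $\mathbf{Der}_{p-1}$.

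The second step is to check that the new numerator is divisible by $p$, which is what makes the quotient an integer, in parallel with Lemma \ref{intg}. By Property \ref{Gt}(3), $\mathbf{Der}_{p-1}-\mathbf{Bell}_{p-1}+1\equiv 0\pmod p$. The third step is to replace $K_p$ by $\mathbf{Der}_{p-1}$ in $\mathbb{G}_p=\bigl((K_p-\mathbf{Bell}_{p-1})+1\bigr)/p$, which gives the displayed formula. The same substitution is consistent with Remark \ref{wilson}, since
\[
\mathbf{Der}_{p-1}-\mathbf{Bell}_{p-1}\equiv -1\equiv (p-1)! \pmod p .
\]

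The main obstacle is the third step, and it is a genuine limitation rather than a technical one. $K_p$ and $\mathbf{Der}_{p-1}$ agree only modulo $p$, not as integers. Swapping one for the other therefore changes the numerator by the multiple of $p$ given by $K_p-\mathbf{Der}_{p-1}$. As a consequence it changes the quotient by the integer $(K_p-\mathbf{Der}_{p-1})/p$, which need not vanish modulo $p$.

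So the stated equality can only be justified if the corollary is read in the same convention as Definition \ref{quotient}, where $K_p$ stands for its residue representative $\mathbf{Bell}_{p-1}-1\equiv\mathbf{Der}_{p-1}$. Under that reading the corollary follows directly from the three steps above. If $K_p$ is instead taken to be the exact integer $\sum_{n=0}^{p-1}n!$, the equality is not proved by this argument. In that case I would compute $p=3$ explicitly: $K_3=4$, $\mathbf{Der}_2=1$ and $\mathbf{Bell}_2=2$ give numerators $3$ and $0$, so the two quotients are $1$ and $0$ and the equality fails. The corollary would then hold only in the residue-class sense described above.
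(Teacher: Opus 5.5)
Your analysis is correct, and it refutes the paper's own argument. The paper supports this corollary only by recording $\mathbf{Der}_{p-1}\equiv K_p\equiv \mathbf{Bell}_{p-1}-1 \pmod p$ and then writing ``following Definition~\ref{quotient}'' the displayed formula. That is exactly your third step: it replaces $K_p$ by $\mathbf{Der}_{p-1}$ in the numerator on the strength of a congruence modulo $p$.

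The paper's $\mathbb{G}_p$ is built from the exact integer $K_p=\sum_{n=0}^{p-1}n!$. This is what the appendix tabulates: $\mathbb{G}_3=1$, $\mathbb{G}_5=4$, $\mathbb{G}_7=96$, $\mathbb{G}_{11}=356540$. The derangement formula instead gives $0,\,-1,\,9,\,110817$. So the statement is false as an identity, and the discrepancy is precisely your integer $(K_p-\mathbf{Der}_{p-1})/p$. Your first two steps are correct, but they only show that the right-hand side is an integer, not that it equals $\mathbb{G}_p$.

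The part to tighten is your fallback ``residue-class'' reading. The expression $(x-\mathbf{Bell}_{p-1}+1)/p$ is not a function of $x \bmod p$: replacing $x$ by $x+pt$ shifts it by $t$. So ``$K_p$ stands for its residue representative'' does not determine $\mathbb{G}_p$ at all; the representative $\mathbf{Bell}_{p-1}-1$ would give $\mathbb{G}_p=0$ for every $p$. Taking $\mathbf{Der}_{p-1}$ as the representative just turns the corollary into the definition of a different quantity. Likewise, it is not true that every hypothesis of Definition~\ref{quotient} is met by $\mathbf{Der}_{p-1}$, since that definition fixes $K_p$ as the integer $\sum_{n=0}^{p-1} n!$.

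Nor does the formula survive reduction modulo $p$, which is how $\mathbb{G}_p$ is used later (through $(\mathbb{G}_p \bmod p)_p$ and the comparison with $W_p$):
\begin{itemize}
\item $1\not\equiv 0 \pmod 3$;
\item $96\equiv 5\not\equiv 2\equiv 9 \pmod 7$;
\item $356540\equiv 8\not\equiv 3\equiv 110817 \pmod{11}$.
\end{itemize}
The agreement at $p=5$ is an accident of $(K_5-\mathbf{Der}_4)/5=5$.

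The accurate conclusion is that the corollary is false as stated. The only correct statement nearby is
\[
\mathbb{G}_p=\frac{\mathbf{Der}_{p-1}-\mathbf{Bell}_{p-1}+1}{p}+\frac{K_p-\mathbf{Der}_{p-1}}{p},
\]
with both summands integers.
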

Since the Kurepa factorial $K_p$ is defined for odd prime $p>3$ in a $\mathbf{GF}(p)$ one can rewrite $\mathbf{Der}_{p-1}\equiv \mathbf{Bell}_{p-1} - 1\pmod{p}$ in terms of Bernoulli numbers using the Agoh-Guiga conjecture;
\begin{align}
    \mathbf{Der}_{p-1}&\equiv \mathbf{Bell}_{p-1} - 1\pmod{p}\\ \nonumber
    \mathbf{Der}_{p-1}&\equiv \mathbf{Bell}_{p-1} + p\mathbf{B}_{p-1}\pmod{p}\\
     \mathbf{Der}_{p-1}&\equiv \mathbf{Bell}_{p-1} + s_n \pmod{p}.
\end{align}
These lead to the following proposition:
\begin{proposition}\label{KD}
   For any integer $p>1$, the following are equivalent
   \begin{enumerate}
       \item $p\mathbf{B}_{p-1} \equiv K_p - \mathbf{Bell}_{p-1} \pmod{p}$\\
        \item $\sum_{k=1}^{p-1}k^{p-1} \equiv K_p - \mathbf{Bell}_{p-1} \pmod{p}$\\
        \item $p\mathbf{B}_{p-1} \equiv \mathbf{Der}_{p-1} - \mathbf{Bell}_{p-1}\pmod{p}$\\
        \item $p\mathbf{B}_{p-1} \equiv \mathbf{Der}_{p-1} - \sum_{0\leq n\leq (p-1)} S(n, p-1)\pmod{p}$\\
        \item $p\mathbf{B}_{p-1} \equiv K_p -  \sum_{0\leq n\leq (p-1)} S(n, p-1)\pmod{p}$\\
   \end{enumerate}
   where $p$ is an odd prime and $S(n,k)$ is Stirling number of the second kind.
\end{proposition}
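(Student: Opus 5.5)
The plan is to show that, for an odd prime $p$, both sides of each of the five congruences reduce to the same residue, namely $-1 \bmod p$. All five statements are then simultaneously true, and so trivially equivalent. I will read the hypothesis ``for any integer $p>1$'' through the closing clause ``where $p$ is an odd prime'', so only odd primes are treated. No attempt will be made to turn the proposition into a primality criterion: that would be the open Agoh--Giuga conjecture (Conjecture \ref{conj9}).

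\emph{Step 1 (left-hand sides).} First, $p\mathbf{B}_{p-1}$ is $p$-integral, and the von Staudt--Clausen theorem gives $p\mathbf{B}_{p-1}\equiv -1 \pmod p$; this is the prime direction of Conjecture \ref{conj9}, which is a theorem. Second, Fermat's little theorem gives $k^{p-1}\equiv 1$ for $1\le k\le p-1$, hence
\[
s_p=\sum_{k=1}^{p-1}k^{p-1}\equiv p-1\equiv -1 \pmod p .
\]
So the left-hand sides of items 1, 3, 4, 5 and that of item 2 are all $\equiv -1$.

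\emph{Step 2 (right-hand sides).} Property \ref{Gt} gives $K_p\equiv \mathbf{Bell}_{p-1}-1 \pmod p$, which the paper also derived from $\sum_{n=0}^{p-2}n!\equiv \mathbf{Bell}_{p-1}$ and Wilson's theorem. Hence
\[
K_p-\mathbf{Bell}_{p-1}\equiv -1 \pmod p .
\]
Next, the Galois-field computation above shows $\mathbf{Der}_{p-1}\equiv K_p \pmod p$, so $\mathbf{Der}_{p-1}-\mathbf{Bell}_{p-1}\equiv -1$ as well. For items 4 and 5, I replace $\sum S(n,p-1)$ by $\mathbf{Bell}_{p-1}$, which is how Property \ref{Gt}(4)--(5) use it. This is the identity $\mathbf{Bell}_{p-1}=\sum_{k} S(p-1,k)$, together with the vanishing $S(p,k)\equiv 0$ for $1<k<p$ recorded earlier. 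Items 4 and 5 then collapse to items 3 and 1 respectively.

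\emph{Step 3 (conclusion).} Each of the five congruences now reads $-1\equiv -1 \pmod p$, so each holds for every odd prime, and any one implies any other. Equivalently, one can chain them. Items $1\Leftrightarrow 2$ follow since $p\mathbf{B}_{p-1}\equiv s_p$. Items $1\Leftrightarrow 3$ and $4\Leftrightarrow 5$ follow since $K_p\equiv\mathbf{Der}_{p-1}$. Items $3\Leftrightarrow 4$ follow since the Stirling sum equals $\mathbf{Bell}_{p-1}$.

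\emph{Main obstacle.} The only delicate point is the Stirling sum in items 4 and 5. Read literally as $\sum_{0\le n\le p-1}S(n,p-1)$, it equals $S(p-1,p-1)=1$, which is not $\mathbf{Bell}_{p-1}$ in general. So I would first fix the intended meaning as $\sum_{0\le k\le p-1}S(p-1,k)=\mathbf{Bell}_{p-1}$, consistent with Property \ref{Gt}, and state this explicitly in the proof. After that, the argument is just the three standard congruences above: von Staudt--Clausen, Fermat, and Wilson.
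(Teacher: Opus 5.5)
Your proposal is correct and follows essentially the same route as the paper, whose one-line proof likewise reduces every item to $-1 \bmod p$ using the prime direction of the Agoh--Giuga congruence (Conjecture \ref{conj9}), Fermat's little theorem, and the Wilson-type relation $(p-1)!\equiv K_p-\mathbf{Bell}_{p-1}$ of Remark \ref{wilson}. Your version improves on it in three ways: it makes explicit that only the von Staudt--Clausen direction $p\mathbf{B}_{p-1}\equiv -1$ is needed, it restricts to odd primes $p$, and it notes that items 4--5 are true only when the Stirling sum is read as $\sum_k S(p-1,k)=\mathbf{Bell}_{p-1}$.
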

\begin{proof}
    The proof of this is by the use of conjecture \ref{conj9}, Fermat little theorem and the lemma \ref{wilson}.
\end{proof}

The Guiga conjecture is equivalent to the Agoh conjecture and this connects to the Wilson's theorem. We define the following congruences related to the Kurepa conjecture

\begin{definition}\label{KAG}
The Kurepa modulo odd prime $p>3$
$$ K_p \equiv \mathbf{Bell}_{p-1}+ p\mathbf{B}_{p-1} \pmod{p}$$
   where $p\mathbf{B}_{p-1}\equiv -1 \pmod{p}$ is the Agoh-Giuga conjecture. 
\end{definition}

\begin{lemma}\label{Alem}
    Let $q_p(m) = \frac{m^{p-1}-1}{p}$ and define an Agoh-Giuga quotient 
\begin{align}
    \mathbb{AG}_p=\frac{p\mathbf{B}_{p-1}+1}{p}
\end{align}
then $$ \mathrm{Q}_p(m):= \frac{p\mathbf{B}_{p-1}+m^{p-1}}{p}=\frac{p\mathbf{B}_{p-1}+1}{p}+q_p(m)= \mathbb{AG}_p + q_p(m) \in \BZ_p$$
is the special quotient \cite{agoh2022congruences}.
\end{lemma}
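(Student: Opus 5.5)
The plan is to split the statement into two independent parts: a purely algebraic identity, and a $p$-integrality claim. The identity is immediate. Write $m^{p-1}=1+(m^{p-1}-1)$ and divide by $p$:
$$\mathrm{Q}_p(m)=\frac{p\mathbf{B}_{p-1}+1}{p}+\frac{m^{p-1}-1}{p}=\mathbb{AG}_p+q_p(m).$$
So everything reduces to showing that each summand lies in $\BZ_p$, taking, as in the definition of the Fermat quotient, $p\nmid m$.

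The summand $q_p(m)$ is routine. By Fermat's little theorem $m^{p-1}\equiv 1 \pmod p$ whenever $p\nmid m$, so $q_p(m)$ is an honest integer and in particular lies in $\BZ_p$.

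The real content is $\mathbb{AG}_p\in\BZ_p$, i.e.\ $p\mathbf{B}_{p-1}\equiv -1 \pmod{p\BZ_p}$. Here I would not appeal to Conjecture \ref{conj9} as an unproved hypothesis. Its "only if" direction for primes is a theorem, the von Staudt--Clausen theorem, which I would invoke directly:
$$\mathbf{B}_{p-1}+\sum_{\substack{q \text{ prime}\\ (q-1)\mid (p-1)}}\frac1q\in\BZ .$$
Multiplying by $p$ and isolating the term $q=p$ (which occurs since $(p-1)\mid(p-1)$) gives
$$p\mathbf{B}_{p-1}+1 = pN - p\sum_{q\neq p}\frac1q, \qquad N\in\BZ .$$
Every $q\neq p$ is a unit in $\BZ_p$, so the right-hand side lies in $p\BZ_p$. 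Dividing by $p$ yields $\mathbb{AG}_p=N-\sum_{q\neq p}1/q\in\BZ_p$. Adding the two summands then gives $\mathrm{Q}_p(m)\in\BZ_p$.

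The main obstacle is this step: getting the exact $p$-adic valuation of $\mathbf{B}_{p-1}$, namely that its denominator contains $p$ exactly once with the residue that makes $p\mathbf{B}_{p-1}\equiv-1$. That is precisely what von Staudt--Clausen supplies. As a cross-check, Property \ref{prop all}(3) (Glaisher--Beeger), $W_p\equiv \mathbf{B}_{p-1}+\frac1p-1 \pmod p$, read in $\BZ_p$, gives $\mathbb{AG}_p\equiv W_p+1 \pmod p$ with $W_p\in\BZ$. This is consistent with the integrality, and I would record it as a remark linking $\mathbb{AG}_p$ to the Wilson quotient.
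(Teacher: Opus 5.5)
Your proof is correct, and it is more complete than what the paper offers, since the paper gives no argument for Lemma \ref{Alem} at all. The splitting $m^{p-1}=1+(m^{p-1}-1)$ is built into the statement, and membership in $\BZ_p$ is simply attributed to Agoh. The surrounding text (Conjecture \ref{conj9}, Definition \ref{KAG}) also presents $p\mathbf{B}_{p-1}\equiv -1 \pmod p$ as ``the Agoh--Giuga conjecture,'' which makes the $p$-integrality of $\mathbb{AG}_p$ look conditional.

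Your route via von Staudt--Clausen shows it is unconditional. It even gives the explicit form $\mathbb{AG}_p=N-\sum_{q\neq p,\,(q-1)\mid(p-1)}1/q\in\BZ_{(p)}$. The cost is one classical theorem, whereas the paper's bare citation leaves the key step unjustified inside the paper.

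You were also right to impose $p\nmid m$. The stated lemma omits this hypothesis but genuinely needs it: if $p\mid m$, then $m^{p-1}/p\in p\BZ_p$ while $\mathbf{B}_{p-1}$ has $p$-adic valuation $-1$, so $\mathrm{Q}_p(m)=\mathbf{B}_{p-1}+m^{p-1}/p\notin\BZ_p$.

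Keeping the Glaisher--Beeger congruence as a consistency remark rather than a proof step is likewise the right choice. Reading it modulo $p$ already presupposes that $\mathbf{B}_{p-1}+1/p$ is $p$-integral, so it could not replace von Staudt--Clausen.
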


\begin{corollary}
    There are infinitely many pairs of $(m,p)$ where $\mathrm{Q}_p(m)\equiv 0\mod{p}$. 
\end{corollary}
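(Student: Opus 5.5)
The plan is to fix an arbitrary odd prime $p$ and show that the set of $m$ with $p\nmid m$ and $\mathrm{Q}_p(m)\equiv 0 \pmod p$ is nonempty and in fact infinite. Even a single prime then supplies infinitely many pairs $(m,p)$. Doing this for every odd prime gives the stronger statement that each $p$ occurs in infinitely many such pairs. By Lemma \ref{Alem}, $\mathrm{Q}_p(m)=\mathbb{AG}_p+q_p(m)$, so the target congruence is
$$q_p(m)\equiv -\mathbb{AG}_p \pmod p .$$
The proof then splits into two steps: make sense of the right-hand side modulo $p$, and show that the Fermat quotient takes every residue class modulo $p$.

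\emph{Step 1.} I first check that $\mathbb{AG}_p$ is a $p$-integral rational number, so that the congruence is meaningful. Von Staudt--Clausen gives $p\mathbf{B}_{p-1}\equiv -1 \pmod p$ in $\Z_{(p)}$, since $1/p$ is the only term of the von Staudt--Clausen sum that is not $p$-integral. Hence $p\mathbf{B}_{p-1}+1\equiv 0 \pmod p$, and $\mathbb{AG}_p\in\Z_{(p)}$. Let $r_p\in\{0,\dots,p-1\}$ be the residue of $-\mathbb{AG}_p$ modulo $p$. Equivalently, Property \ref{prop all}(3) gives $\mathbb{AG}_p\equiv W_p+1 \pmod p$, which is again $p$-integral.

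\emph{Step 2.} I use the standard translation rule for Fermat quotients. For $p\nmid m$ and any integer $k$, expanding $(m+kp)^{p-1}$ binomially modulo $p^2$ gives
$$(m+kp)^{p-1}\equiv m^{p-1}+(p-1)kp\,m^{p-2}\pmod{p^2},$$
and therefore
$$q_p(m+kp)\equiv q_p(m)-k\,m^{-1}\pmod p .$$
Fix $m_0=1$, so that $q_p(1)=0$. Choose $k\equiv -r_p \pmod p$; then $m=1+kp$ satisfies $q_p(m)\equiv r_p$, and hence $\mathrm{Q}_p(m)\equiv 0 \pmod p$. Since $q_p(m)\bmod p$ depends only on $m\bmod p^2$, every $m$ in the residue class $1+kp+p^2\Z$ (with $m>0$) also works. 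These values are automatically prime to $p$, and there are infinitely many of them. Ranging over all odd primes $p$ then gives infinitely many pairs.

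The main obstacle is Step 1. The paper phrases $p\mathbf{B}_{p-1}\equiv-1$ through the Agoh--Giuga conjecture, but for prime $p$ this is an unconditional theorem (von Staudt--Clausen). I would cite it in that form so that the argument does not depend on any conjecture. Step 2 is a routine binomial computation, and the rest is bookkeeping about residue classes modulo $p^2$.
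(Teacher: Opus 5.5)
Your proof is correct, and it takes a genuinely different route from the paper. The paper's ``proof'' only lists the pairs $(2,3),(6,7),(14,19),(5,23),(19,31),(20,37)$, and a finite list cannot establish an infinitude claim. You supply an actual argument in two steps:
\begin{enumerate}
\item Von Staudt--Clausen gives $\mathbb{AG}_p=\mathbf{B}_{p-1}+1/p\in\mathbb{Z}_{(p)}$ unconditionally. As you note, the Agoh--Giuga conjecture is only about the converse direction, so nothing rests on it.
\item The translation rule $q_p(m+kp)\equiv q_p(m)-k/m \pmod p$ shows that $q_p$ takes every residue class modulo $p$.
\end{enumerate}
Consequently $m=1+kp$ with $k\equiv \mathbb{AG}_p\equiv W_p+1 \pmod p$, and every positive $m$ congruent to it modulo $p^2$, satisfies $\mathrm{Q}_p(m)\equiv 0\pmod p$ for every odd prime $p$. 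This gives more than the statement asks: every odd prime occurs in infinitely many pairs.

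The flip side is that your argument shows the literal statement is essentially trivial once $m$ is unrestricted. The paper's examples all have $1\le m\le p-1$, and they skip primes such as $5$ and $11$, for which no $m<p$ works. For $p=5$ one has $q_5(1),\dots,q_5(4)\equiv 0,3,1,1 \pmod 5$ while $-\mathbb{AG}_5\equiv 4$; your construction gives $m=31$ instead. This suggests the intended content may be that infinitely many primes admit a solution with $m<p$. Your method cannot address that version, because it depends on moving $m$ within its residue class modulo $p$, and the restriction $m<p$ removes exactly that freedom. The paper's list does not prove that stronger version either.
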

\begin{proof}
    We list a few pairs here; $(2, 3), (6, 7) , (14, 19), (5, 23), (19, 31),
(20, 37)\ldots
$
\end{proof}

\begin{theorem}\label{Gert}
For successive members of $K_p$ with  $K_p\equiv \mathbf{Bell}_{p-1} -1 \pmod{p}$, the
$$K_{p+1}\equiv p! + K_p \pmod{p}$$ 
where $p=3,5,\cdots$.
\end{theorem}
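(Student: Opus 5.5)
The congruence is essentially the recursive definition of the left factorial, so the plan is to read it off directly from the sum that defines $K_p$. By the definition in the introduction, $K_n=\sum_{m=0}^{n-1} m!$ for every $n\in\BN$. Hence for every $n$ we have the exact integer identity
\[
K_{n+1}=\sum_{m=0}^{n} m! = \sum_{m=0}^{n-1} m! + n! = K_n + n!.
\]
I would first record this identity and then specialise it to $n=p$ an odd prime. This gives $K_{p+1}=K_p+p!$ as an equality of integers, so in particular it holds modulo $p$. The hypothesis $K_p\equiv \mathbf{Bell}_{p-1}-1\pmod p$, from Property \ref{Gt}(5), plays no role in this step; it only lets us restate the result in Bell-number form.

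Next I would simplify. Since $p\mid p!$, the congruence collapses to $K_{p+1}\equiv K_p\pmod p$. Combining this with Property \ref{Gt}(5), and with the Wilson-theorem derivation given in Section \ref{sec 3}, yields
\[
K_{p+1}\equiv K_p\equiv \mathbf{Bell}_{p-1}-1 \equiv \mathbf{Der}_{p-1}\pmod p.
\]
This shows that the Kurepa conjecture for $p$ is equivalent to $p\nmid K_{p+1}$, which is the form in which the statement is useful for the later sections.

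There is no genuine obstacle here, because the statement is a telescoping identity. The only point needing care is not to confuse $K_{p+1}$, which is the sum up to $p!$, with a shifted index convention that would instead add $(p-1)!\equiv -1$. I would state the indexing of $K_n$ explicitly to avoid that pitfall. If the author intended $K_{p+1}$ to include the term $(p+1)!$, the same argument would still apply, since that term is also divisible by $p$.
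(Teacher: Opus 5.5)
Your argument is correct and is the same as the paper's: both split off the last term of the defining sum to get the exact identity $K_{p+1}=K_p+p!$, and the congruence (indeed $K_{p+1}\equiv K_p \pmod p$) follows at once. The paper routes this through $\sum_{n=0}^{p-2}n!+(p-1)!$ and writes $\sum_{n=0}^{p-2}n!=\mathbf{Bell}_{p-1}$ as an equality, although it holds only modulo $p$; you rightly observe that the Bell hypothesis is not needed at all.
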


\begin{proof}
It is well known that,
\begin{align*}
 K_p=\sum_{ n=0}^{p-1} n!&= 0! + 1! + 2! + \cdots + (p-2)! + (p-1)!\\
        K_p&= \sum_{ n=0}^{p-2} n! + (p-1)!=\mathbf{Bell}_{p-1} + (p-1)! \\
         K_p&\equiv \mathbf{Bell}_{p-1} -1 \pmod{p}
\end{align*}
next we compute for 

\begin{align*}
 K_{p+1}=&\sum_{0\leq n\leq p} n!= 0! + 1! + 2! + \cdots + (p-1)!+ p!\\
  K_{p+1}&=\sum_{ n=0}^{p} n!= \sum_{ n=0}^{p-1} n! +p! \\
    K_{p+1}&=\sum_{ n=0}^{p} n!= \sum_{ n=0}^{p-2} n! + (p-1)! +p! \\
         K_{p+1}&= \mathbf{Bell}_{p-1} + (p-1)!+ p!= K_p + p!\\
         K_{p+1}&\equiv \mathbf{Bell}_{p-1} -1 \pmod{p}+ p!
\end{align*}
So we have $$K_{p+1}\equiv p! + K_p \pmod{p}.$$ 

\end{proof}


\begin{theorem}
    For an odd integer $p>1$ the successive member $K_{p+1} \pmod{p}$ is given by:
    $$ K_{p+1} \equiv \mathbf{Bell}_{p-1}-p!+p\mathbf{B}_{p-1}\pmod{p}.$$
\end{theorem}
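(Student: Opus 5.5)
The plan is to reduce the claim to two ingredients already in the paper: the recurrence of Theorem~\ref{Gert} and the Agoh--Giuga congruence $p\mathbf{B}_{p-1}\equiv -1 \pmod p$ (Conjecture~\ref{conj9} and Definition~\ref{KAG}). I would take the statement in the same setting as Theorem~\ref{Gert}, namely with $K_p\equiv \mathbf{Bell}_{p-1}-1 \pmod p$ as a standing hypothesis, since ``successive member'' refers back to that result.

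\textbf{Step 1.} Since $p\mid p!$ for every integer $p>1$, prime or not, we have $p!\equiv -p!\equiv 0 \pmod p$. The exact identity $K_{p+1}=K_p+p!$ from the proof of Theorem~\ref{Gert} then gives $K_{p+1}\equiv K_p \pmod p$. The claimed congruence is therefore equivalent to
$$K_p\equiv \mathbf{Bell}_{p-1}+p\mathbf{B}_{p-1}\pmod p,$$
which is exactly Definition~\ref{KAG}, and equivalently item~1 of Proposition~\ref{KD}.

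\textbf{Step 2.} Substitute the standing hypothesis $K_p\equiv\mathbf{Bell}_{p-1}-1$. What remains is $p\mathbf{B}_{p-1}\equiv -1 \pmod p$, where $p\mathbf{B}_{p-1}$ is read modulo $p$ as a rational number whose denominator is prime to $p$.
\begin{itemize}
\item \emph{Prime $p$.} Von Staudt--Clausen gives $\mathbf{B}_{p-1}+\sum_{(q-1)\mid(p-1)}\frac1q\in\mathbb{Z}$, with the sum over primes $q$. Multiplying by $p$, every term with $q\neq p$ is $p$-integral and divisible by $p$, so $p\mathbf{B}_{p-1}\equiv -1 \pmod p$. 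This is the easy direction of Conjecture~\ref{conj9}.
\item \emph{Odd composite $p$.} Only the hypotheses $K_p\equiv\mathbf{Bell}_{p-1}-1$ and $p\mathbf{B}_{p-1}\equiv-1$ can close the argument. I would state them explicitly as the conditions under which the theorem holds.
\end{itemize}

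\textbf{Main obstacle.} The obstacle is the composite case, and I expect it cannot be removed: there the statement as worded is false without these hypotheses. For $p=9$ one has $K_{10}=409114\equiv 1 \pmod 9$ and $\mathbf{Bell}_8=4140\equiv 0 \pmod 9$. Also $9\mathbf{B}_8=-3/10\equiv -3\equiv 6 \pmod 9$, since $10\equiv 1 \pmod 9$. The right-hand side is then $6\not\equiv 1$.

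So the proof I would write consists of Step~1, valid for all odd $p>1$, followed by Step~2. It is an unconditional proof for odd primes and a conditional one for odd composites, with $p=9$ recorded as showing that the conditions are necessary.
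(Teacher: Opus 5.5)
Your proposal is correct and follows the paper's route: $K_{p+1}=K_p+p!$, then $K_p\equiv \mathbf{Bell}_{p-1}-1 \pmod p$, then $-1\equiv p\mathbf{B}_{p-1}\pmod p$. It is tighter than the paper's proof in three ways: the paper cites the Agoh--Giuga conjecture where only the von Staudt--Clausen direction is needed, and its computation ends with $+p!$ rather than $-p!$, which is immaterial since $p\mid p!$; your $p=9$ check ($K_{10}\equiv 1$ while the right-hand side is $\equiv 6 \pmod 9$) also correctly shows that the statement must be read for odd primes only.
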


\begin{proof}
From Theorem \ref{Gert}, $K_{p+1}\equiv K_p + p!$, and from proposition \ref{KD}, definition \ref{KAG} and conjecture \ref{conj9}, one can write the Kurepa modulo prime as;

\begin{align*}
 K_p=\sum_{ n=0}^{p-1} n!&= 0! + 1! + 2! + \cdots + (p-2)! + (p-1)!\\
        K_p&= \sum_{ n=0}^{p-2} n! + (p-1)!=\mathbf{Bell}_{p-1} + (p-1)! \\
         K_p&\equiv \mathbf{Bell}_{p-1} -1 \pmod{p}\\
          K_p&\equiv \mathbf{Bell}_{p-1}+ p\mathbf{B}_{p-1} \pmod{p}
\end{align*}
also one can compute for successive members as in Theorem \ref{Gert},
\begin{align*}
    K_{p+1}&=\sum_{ n=0}^{p} n!= \sum_{ n=0}^{p-2} n! + (p-1)! +p! \\
         K_{p+1}&= \mathbf{Bell}_{p-1} + (p-1)!+ p!= K_p + p!\\
        &\equiv \mathbf{Bell}_{p-1} -1 \pmod{p}+ p!\\
         &\equiv \mathbf{Bell}_{p-1}+ p\mathbf{B}_{p-1} \pmod{p} + p!\\
          &\equiv \mathbf{Bell}_{p-1}+ p\mathbf{B}_{p-1} + p! \pmod{p}\\
\end{align*}
\end{proof}
\begin{remark}
Since $p(p + 1)\mathbf{B}_{p-1}\equiv (p-1)! \pmod{p^2}$ (see \cite{carlitz1952some})
it follows from former theorem that
\begin{align*}
     K_{p+1}&= \mathbf{Bell}_{p-1} + (p-1)!+ p!\\
     &= \mathbf{Bell}_{p-1} + (p-1)!+ p(p-1)!\\
     &=\mathbf{Bell}_{p-1}+ (1+p) (p-1)!\\
     &\equiv \mathbf{Bell}_{p-1}+ p(1 + p)\mathbf{B}_{p-1}\\
     K_{p+1} &\equiv \mathbf{Bell}_{p-1}+ (p-1)! \pmod{p^2}.
\end{align*}
\end{remark}

The Vladimirov numbers $V_p$ and $V_{p}^*$ have beautiful connection to the Wilson's theorem and we shall explore this interesting relationship. 
To prove Theorem \ref{VK} we shall need the following lemma:

\begin{lemma}
    For $V_p=\sum_{k=0}^{p-2} (-1)^k \frac{\mathbf{B}_k}{k!}$ and  $V_{p}^*=\sum_{k=0}^{p-2} \frac{\mathbf{B}_k}{k!}$ the following congruences hold
    \begin{enumerate}
        \item[(i)] $ V_{p}^{*}\equiv 1 + W_p \pmod{p}$
        \item [(ii)]  $V_p\equiv  W_p + 2 \pmod{p}$.
    \end{enumerate}
    where $W_p$ is the Wilson's quotient.
\end{lemma}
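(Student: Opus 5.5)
The plan is to reduce both congruences to the two special cases $m=1$ of Theorem~\ref{Ag} (Agoh). The only extra work is isolating the $k=0$ term and checking that all the quantities involved are $p$-integral, so that congruences modulo $p$ make sense.

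\emph{Step 1: $p$-integrality.} For $0\le k\le p-2$ the number $\mathbf{B}_k/k!$ is a $p$-integral rational.
\begin{itemize}
\item The factor $k!$ is not divisible by $p$, since $k<p$.
\item By von Staudt--Clausen, $p$ divides the denominator of $\mathbf{B}_k$ only when $(p-1)\mid k$ with $k\ge 2$ even. This cannot happen for $2\le k\le p-2$.
\item The remaining values $\mathbf{B}_0=1$ and $\mathbf{B}_1=-\tfrac12$ are $p$-integral for odd $p$.
\end{itemize}
Hence $V_p$ and $V_p^*$ lie in $\Z_{(p)}$, and reducing them modulo $p$ is legitimate. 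This is the same hypothesis under which Theorem~\ref{Ag} is stated.

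\emph{Step 2: split off $k=0$.} Since $\mathbf{B}_0/0!=1$, we can write
\[
V_p^*=1+\sum_{k=1}^{p-2}\frac{\mathbf{B}_k}{k!},\qquad V_p=1+\sum_{k=1}^{p-2}(-1)^k\frac{\mathbf{B}_k}{k!}.
\]
By Theorem~\ref{Ag}(3), the first sum is $\equiv W_p \pmod p$, which gives (i): $V_p^*\equiv 1+W_p$. By Theorem~\ref{Ag}(4), the second sum is $\equiv W_p+1\pmod p$, which gives (ii): $V_p\equiv W_p+2$. Items (3) and (4) are themselves the case $m=1$ of items (1) and (2), where $q_p(1)=0$ and $1/m=1$.

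\emph{Step 3: consistency check.} As a sanity check, the two sums differ only in the odd-index terms. Since $\mathbf{B}_k=0$ for odd $k\ge3$, only $k=1$ contributes, and
\[
V_p-V_p^*=-2\,\mathbf{B}_1=1.
\]
This agrees with (ii) minus (i), and also with Corollary~\ref{Vp}. So (ii) can alternatively be derived from (i) alone, using only the value of $\mathbf{B}_1$.

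\emph{Main obstacle.} There is no real difficulty if Theorem~\ref{Ag} is taken as given. The only delicate point is the $p$-integrality in Step~1, which uses von Staudt--Clausen and the bound $k\le p-2$ (the term $k=p-1$ is deliberately excluded). If one wanted a self-contained argument instead of citing Agoh, I would first try to obtain (i) from the generating function $t/(e^t-1)$ truncated modulo $t^{p-1}$, combined with Glaisher's congruence $W_p\equiv \mathbf{B}_{p-1}+\frac1p-1$ from Properties~\ref{prop all}. That route would be the hard part.
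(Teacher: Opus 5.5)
\textbf{The step that fails is the citation of Theorem~\ref{Ag}(3)--(4), and the lemma itself is false as stated.} Your argument is the same as the paper's: split off the $k=0$ term and quote items (3) and (4) of Theorem~\ref{Ag}. No proof along these lines can work, and neither can your proposed generating-function route.

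Take $p=7$. Since $\mathbf{B}_3=\mathbf{B}_5=0$,
\[
V_7^*=1-\frac12+\frac{1}{12}-\frac{1}{720}=\frac{419}{720}\equiv\frac{6}{6}\equiv 1 \pmod 7 .
\]
On the other side, $W_7=721/7=103\equiv 5 \pmod 7$, so $1+W_7\equiv 6$. Your Step~3 is correct ($V_p-V_p^*=-2\mathbf{B}_1=1$ exactly), so (ii) fails at the same prime: $V_7\equiv 2$, while $W_7+2\equiv 0$. The same failure occurs at $p=17$, where $V_{17}^*\equiv 7$ but $1+W_{17}\equiv 6$. So items (3)--(4) of Theorem~\ref{Ag}, as printed with $\mathbf{B}_k/k!$, are not true.

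\textbf{The version with $\mathbf{B}_k/k$ is what holds.} Divide $\sum_{k=0}^{p-1}\binom{p}{k}\mathbf{B}_k=0$ by $p$. For $1\le k\le p-2$ we have $\frac1p\binom pk\equiv\frac{(-1)^{k-1}}{k}\pmod p$, which gives
\[
\frac1p+\mathbf{B}_{p-1}+\sum_{k=1}^{p-2}(-1)^{k-1}\frac{\mathbf{B}_k}{k}\equiv0 \pmod p .
\]
Since $\mathbf{B}_k=0$ for odd $k\ge3$, this rearranges to
\[
\sum_{k=1}^{p-2}\frac{\mathbf{B}_k}{k}\equiv\mathbf{B}_{p-1}+\frac1p-1\equiv W_p \pmod p
\]
by Glaisher's congruence. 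Adding $-2\mathbf{B}_1=1$ then gives $\sum_{k=1}^{p-2}(-1)^k\frac{\mathbf{B}_k}{k}\equiv W_p+1$. As a check at $p=7$: $-\frac12+\frac1{12}-\frac1{120}=-\frac{51}{120}\equiv 5=W_7$.

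\textbf{Why small cases look fine.} We have $k!=k$ only for $k\le 2$, and $\mathbf{B}_3=0$. So the two sums agree for $p\le 5$. From $p=7$ on they differ in general; they happen to coincide at $p=11$ and $p=13$.

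\textbf{What survives.} Your Steps 1--3 are sound. The lemma becomes true only after replacing $\mathbf{B}_k/k!$ by $\mathbf{B}_k/k$ for $k\ge 1$. That quantity is no longer Vladimirov's atom, which genuinely involves $k!$.
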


\begin{proof}
Let $V_{p}^*=\sum_{k=0}^{p-2} \frac{\mathbf{B}_k}{k!}$, one can rewrite this expression using Agoh's theorem from Theorem \ref{Ag} as
\begin{align*}
     V_{p}^{*}=\sum_{k=0}^{p-2} \frac{\mathbf{B}_k}{k!}&= \frac{\mathbf{B}_0}{0!} + \sum_{k=1}^{p-2} \frac{\mathbf{B}_k}{k!},\\
     &= 1 + \sum_{k=1}^{p-2} \frac{\mathbf{B}_k}{k!}\\
     &\equiv 1 + W_p \pmod{p}.
\end{align*}
and next,

\begin{align*}
    V_p=\sum_{k=0}^{p-2} (-1)^k \frac{\mathbf{B}_k}{k!}&=(-1)^0 \frac{\mathbf{B}_0}{0!} + \sum_{k=1}^{p-2} (-1)^k \frac{\mathbf{B}_k}{k!}\\
    &=1 + \sum_{k=1}^{p-2} (-1)^k \frac{\mathbf{B}_k}{k!}\\
    &\equiv 1 + W_p + 1 \pmod{p}\\
    &\equiv  W_p + 2 \pmod{p}.
\end{align*}
\end{proof}

\begin{lemma} Let $W_p\equiv \mathbf{B}_{p-1} + \frac{1}{p}-1 \pmod{p} $(Glaisher, Beeger \cite{glaisher1900residues, beeger1913quelques}) then the following congruences are equivalent:
    \begin{enumerate}
        \item[(i)] $ V_{p}^{*}\equiv \mathbf{B}_{p-1} + \frac{1}{p} \pmod{p}$
        \item [(ii)]  $V_p \equiv\mathbf{B}_{p-1} + \frac{1}{p}+1 \pmod{p}$
    \end{enumerate}
    where $\mathbf{B}_{k}$ is the $k$-th Bernoulli number.
\end{lemma}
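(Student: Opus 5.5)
The plan is to substitute the Glaisher--Beeger congruence $W_p\equiv \mathbf{B}_{p-1}+\frac{1}{p}-1 \pmod{p}$ (item 3 of Properties \ref{prop all}) into the two congruences of the preceding lemma, namely $V_{p}^{*}\equiv 1+W_p$ and $V_p\equiv W_p+2 \pmod p$. Everything is read in $\mathbb{Z}_{(p)}$, or more precisely in $\frac{1}{p}\mathbb{Z}_{(p)}$. The expression $\mathbf{B}_{p-1}+\frac1p$ is $p$-integral by von Staudt--Clausen, since $p\mathbf{B}_{p-1}\equiv -1 \pmod p$. So the congruences make sense as statements about the $p$-integral quantity $\mathbf{B}_{p-1}+\frac1p$, and every manipulation will keep $\mathbf{B}_{p-1}$ and $\frac1p$ grouped together in this form.

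For (i), start from $V_p^{*}=1+\sum_{k=1}^{p-2}\frac{\mathbf{B}_k}{k!}$. Apply Theorem \ref{Ag}(3) to get $V_p^*\equiv 1+W_p$, then replace $W_p$ using Glaisher--Beeger:
\[
V_p^*\equiv 1+\mathbf{B}_{p-1}+\tfrac1p-1=\mathbf{B}_{p-1}+\tfrac1p \pmod p .
\]

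For (ii), the same route applies. Use $V_p\equiv W_p+2$, obtained from Theorem \ref{Ag}(4), and substitute to get
\[
V_p\equiv \mathbf{B}_{p-1}+\tfrac1p-1+2=\mathbf{B}_{p-1}+\tfrac1p+1 \pmod p .
\]

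The word ``equivalent'' in the statement will be handled by observing that both (i) and (ii) are individually consequences of the hypothesis. Moreover, each follows from the other by the corresponding identity $V_p-V_p^*\equiv 1 \pmod p$. That identity comes from subtracting the two parts of the preceding lemma, or alternatively from Corollary \ref{Vp}, which gives $V_p-V_p^*=1$ exactly.

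The only genuinely delicate point is justifying that congruences involving $\frac1p$ are legitimate. I would handle it by stating explicitly that all terms are grouped as $\mathbf{B}_{p-1}+\frac1p\in\mathbb{Z}_{(p)}$ and invoking von Staudt--Clausen. I would also check that the sums in Theorem \ref{Ag} have $p$-integral terms: for $k\le p-2$, the prime $p$ does not divide $k!$, and no $\mathbf{B}_k$ with $k\le p-2$ has $p$ in its denominator. The rest is direct substitution.
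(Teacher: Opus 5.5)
You follow the paper's own route: substitute Glaisher--Beeger into $V_p^*\equiv 1+W_p$ and $V_p\equiv W_p+2$. The gap, which you inherit from the paper, is in what you take from Theorem~\ref{Ag}(3),(4). With factorial denominators those congruences are false, so the step ``apply Theorem~\ref{Ag}(3)'' does not go through.

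Take $\mathbf{B}_1=-\tfrac12$ (the paper's convention) and $p=7$. Then $\sum_{k=1}^{5}\mathbf{B}_k/k!=-\tfrac12+\tfrac1{12}-\tfrac1{720}=-\tfrac{301}{720}\equiv 0\pmod 7$, since $301=7\cdot 43$. But $W_7=103\equiv 5$. The failure is not isolated: at $p=17$ the sum is $\equiv 6$ while $W_{17}\equiv 5$.

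The congruence that is actually true has $\mathbf{B}_k/k$. From $\sum_{k=0}^{p-1}\binom pk\mathbf{B}_k=0$ and $\tfrac1p\binom pk\equiv(-1)^{k-1}/k\pmod p$ you get $\mathbf{B}_{p-1}+\tfrac1p\equiv\sum_{k=1}^{p-2}(-1)^k\mathbf{B}_k/k$. Hence $\sum_{k=1}^{p-2}\mathbf{B}_k/k\equiv W_p$ and $\sum_{k=1}^{p-2}(-1)^k\mathbf{B}_k/k\equiv W_p+1$. Nothing of this kind holds for $\mathbf{B}_k/k!$.

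Consequently (i) and (ii) are not consequences of the hypothesis. We have $V_7^*=\tfrac{419}{720}\equiv 1\pmod 7$, but $\mathbf{B}_6+\tfrac17=\tfrac16\equiv 6$. Likewise $V_7\equiv 2$, but $\mathbf{B}_6+\tfrac17+1\equiv 0$.

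What survives is your closing remark. By Corollary~\ref{Vp}, $V_p-V_p^*=-2\mathbf{B}_1=1$ exactly, so (i)$\iff$(ii) for every odd prime $p$. This is the only correct content of ``equivalent'' here, and it needs neither the Glaisher--Beeger hypothesis nor Theorem~\ref{Ag}. Rebuild the proof on that identity alone and drop the assertion that either congruence actually holds. If you want both to hold, you would have to redefine the atoms with $\mathbf{B}_k/k$ in place of $\mathbf{B}_k/k!$. That makes them different objects from Vladimirov's sums.
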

\begin{proof}
Given that $ W_p\equiv \mathbf{B}_{p-1} + \frac{1}{p}-1 \pmod{p}$ \cite{glaisher1900residues, beeger1913quelques, agoh2022congruences} we can rewrite 
\begin{align*}
     V_{p}^{*}&\equiv 1 + W_p \pmod{p}\\
     &\equiv 1 +\mathbf{B}_{p-1} + \frac{1}{p}-1 \pmod{p}\\
     &\equiv \mathbf{B}_{p-1} + \frac{1}{p} \pmod{p},
\end{align*}
also, 

\begin{align*}
    V_p &\equiv  W_p + 2 \pmod{p}\\
    &\equiv\mathbf{B}_{p-1} + \frac{1}{p}-1 +2 \pmod{p}\\
     &\equiv\mathbf{B}_{p-1} + \frac{1}{p}+1 \pmod{p}.
\end{align*}
\end{proof}

\begin{corollary}\label{ident}
    From corollary \ref{Vp}, the following identity and it's equivalent hold:
     $$V_{p}^{'}\equiv  \frac{1}{2} + W_p \pmod{p}$$
     equivalently, $$V_{p}^{'}\equiv \mathbf{B}_{p-1} + \frac{1}{p}-\frac{1}{2} \pmod{p}.$$
\end{corollary}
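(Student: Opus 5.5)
The plan is to combine Corollary \ref{Vp} with the lemma just proved, which expresses the Vladimirov atoms through the Wilson quotient. Corollary \ref{Vp} gives the exact rational identity $V_p = \frac{3}{2} + V_p^{'}$, and equivalently $V_p^{*} = \frac12 + V_p^{'}$. Since $p$ is odd, $2$ is invertible modulo $p$, so both identities may be read as congruences in $\Z_{(p)}$.

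First I will subtract $\frac{3}{2}$ from the lemma's congruence $V_p \equiv W_p + 2 \pmod{p}$. This gives
\[
V_p^{'} = V_p - \tfrac{3}{2} \equiv W_p + 2 - \tfrac{3}{2} = W_p + \tfrac{1}{2} \pmod{p}.
\]
As a consistency check, I will also subtract $\frac12$ from $V_p^{*} \equiv 1 + W_p \pmod{p}$. This yields the same value $W_p + \frac12$, so the two parts of the lemma agree on $V_p^{'}$.

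For the equivalent form, I will substitute the Glaisher--Beeger congruence $W_p \equiv \mathbf{B}_{p-1} + \frac1p - 1 \pmod{p}$ from Property \ref{prop all}(3). This gives
\[
V_p^{'} \equiv \mathbf{B}_{p-1} + \tfrac1p - 1 + \tfrac12 = \mathbf{B}_{p-1} + \tfrac1p - \tfrac12 \pmod{p}.
\]
Alternatively, I can subtract $\frac12$ directly from part (i) of the preceding lemma, which gives the same result.

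The main obstacle is interpretive rather than computational. The quantities $\mathbf{B}_{p-1}$ and $\frac1p$ are not $p$-integral individually. The congruence therefore has to be read for the combination $\mathbf{B}_{p-1} + \frac1p$, which is $p$-integral by von Staudt--Clausen, exactly as in the Glaisher--Beeger statement. I will add a remark that every congruence here is taken in $\Z_{(p)}$ with $\frac12$ interpreted as the inverse of $2$ modulo $p$, and that the $\frac1p$ terms appear only inside that $p$-integral combination. With that convention, every step above is legitimate.
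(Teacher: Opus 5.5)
Your reduction is the paper's own argument. The paper subtracts $\frac{1}{2}$ from $V_p^{*}\equiv 1+W_p$ using $V_p^{*}=\frac{1}{2}+V_p^{'}$; you subtract $\frac{3}{2}$ from $V_p\equiv W_p+2$; both then substitute Glaisher--Beeger. That algebra is fine.

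The problem is the input. The lemma $V_p^{*}\equiv 1+W_p$, $V_p\equiv W_p+2 \pmod{p}$ is false, because it rests on Theorem~\ref{Ag}(3),(4) quoted with $\mathbf{B}_k/k!$. At $p=7$ one gets $\sum_{k=1}^{5}\mathbf{B}_k/k!=-\frac{1}{2}+\frac{1}{12}-\frac{1}{720}=-\frac{301}{720}\equiv 0\pmod{7}$, whereas $W_7=103\equiv 5$. Consequently the corollary itself fails at $p=7$:
\begin{itemize}
\item $V_7^{'}=\frac{1}{12}-\frac{1}{720}=\frac{59}{720}\equiv 4\pmod{7}$;
\item $\frac{1}{2}+W_7\equiv 4+5\equiv 2$;
\item $\mathbf{B}_6+\frac{1}{7}-\frac{1}{2}=-\frac{1}{3}\equiv 2$.
\end{itemize}
Similarly $V_{17}^{'}\equiv 15$ but $W_{17}+\frac{1}{2}\equiv 14\pmod{17}$. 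It only happens to hold for $p=3,5,11,13$. So the step ``subtract $\frac{3}{2}$ from $V_p\equiv W_p+2$'' has no valid premise, and no proof of the statement as written is possible.

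Neither of your safeguards could catch this. The cross-check against $V_p^{*}$ is automatic: $V_p-V_p^{*}=-2\mathbf{B}_1=1$ exactly, and the two parts of the lemma differ by $1$ for the same reason. The $p$-integrality point you call the main obstacle is harmless.

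What is true is the version with $k$ in place of $k!$.
\begin{itemize}
\item Divide $\sum_{k=0}^{p-1}\binom{p}{k}\mathbf{B}_k=0$ by $p$, and use $\frac{1}{p}\binom{p}{k}\equiv\frac{(-1)^{k-1}}{k}\pmod{p}$ for $1\le k\le p-2$.
\item This gives $\mathbf{B}_{p-1}+\frac{1}{p}\equiv\sum_{k=1}^{p-2}(-1)^k\frac{\mathbf{B}_k}{k}=\frac{1}{2}+\sum_{m=1}^{(p-3)/2}\frac{\mathbf{B}_{2m}}{2m}$.
\item Hence, by Glaisher--Beeger, $\sum_{m=1}^{(p-3)/2}\frac{\mathbf{B}_{2m}}{2m}\equiv W_p+\frac{1}{2}\equiv\mathbf{B}_{p-1}+\frac{1}{p}-\frac{1}{2}\pmod{p}$.
\end{itemize}
At $p=7$ this checks out: $\frac{1}{12}-\frac{1}{120}=\frac{3}{40}\equiv 2$. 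That argument does not transfer to the factorial-weighted $V_p^{'}$, which is the object in the statement.
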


\begin{proof}
\begin{align*}
    V_{p}^{*}&=\frac{1}{2}+ V_{p}^{'}\\
    1 + W_p \pmod{p} &\equiv \frac{1}{2}+ V_{p}^{'}\\
     \frac{1}{2} + W_p \pmod{p} &\equiv V_{p}^{'}\quad \mbox{thus we have}\\
      V_{p}^{'}\equiv  \frac{1}{2} + W_p \pmod{p} \\
      \sum_{m=1}^{(p-3)/2} \frac{B_{2m}}{(2m)!}\equiv W_p +\frac{1}{2} \pmod{p} 
\end{align*}
as an equivalent;

\begin{align*}
 \sum_{m=1}^{(p-3)/2} \frac{B_{2m}}{(2m)!}&\equiv W_p +\frac{1}{2} \pmod{p}\\
     V_{p}^{'}&\equiv  \frac{1}{2} + W_p \pmod{p} \\
     &\equiv\frac{1}{2} +\mathbf{B}_{p-1} + \frac{1}{p}-1 \pmod{p}\\
     &\equiv \mathbf{B}_{p-1} + \frac{1}{p}-\frac{1}{2} \pmod{p}.
\end{align*}
\end{proof}

\begin{lemma}
    For any odd prime $p\geq 3$, the Vladimirov atoms $V_p$ and $V_{p}^*$ satisfy the identity $$  V_p \equiv  V_{p}^{*} +1 \pmod{p}.$$
\end{lemma}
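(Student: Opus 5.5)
The plan is to derive the identity directly from the preceding lemma, which gives $V_p^{*}\equiv 1+W_p \pmod{p}$ and $V_p\equiv W_p+2 \pmod{p}$. Subtracting the first congruence from the second eliminates the Wilson quotient:
\begin{align*}
V_p - V_p^{*} \equiv (W_p+2)-(1+W_p) \equiv 1 \pmod{p}.
\end{align*}
Both congruences come from items (3) and (4) of Theorem \ref{Ag} with the $k=0$ term $\mathbf{B}_0/0!=1$ split off. All quantities involved ($\mathbf{B}_k/k!$ for $k\le p-2$) are $p$-integral, so subtraction modulo $p$ is legitimate in $\BZ_{(p)}$.

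As an independent check that avoids Agoh's theorem, I would compare the two sums term by term. We have
\[
V_p - V_p^{*}=\sum_{k=0}^{p-2}\bigl((-1)^k-1\bigr)\frac{\mathbf{B}_k}{k!}=-2\sum_{\substack{k=1\\ k\ \text{odd}}}^{p-2}\frac{\mathbf{B}_k}{k!}.
\]
Since $\mathbf{B}_k=0$ for odd $k\ge3$, only $k=1$ contributes. Its value depends on the sign convention for $\mathbf{B}_1$. With $\mathbf{B}_1=-\tfrac12$ we get $V_p-V_p^{*}=-2\cdot(-\tfrac12)=1$ exactly, not merely modulo $p$. This also matches Corollary \ref{Vp}, since $V_p-V_p^{*}=\tfrac32-\tfrac12=1$. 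For $p=3$ the sum ranges over $k=0,1$ and the same computation applies.

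The main obstacle is the convention for $\mathbf{B}_1$. Corollary \ref{Vp} forces $\mathbf{B}_1=-\tfrac12$: writing $V_p^{*}=1+\mathbf{B}_1+V_p'=\tfrac12+V_p'$ requires it. The identity then holds as an exact equality, and the congruence follows a fortiori. I would state this convention explicitly, since with $\mathbf{B}_1=+\tfrac12$ the difference becomes $-1$. The proof is then the one-line subtraction from the lemma, with the direct computation serving as confirmation.
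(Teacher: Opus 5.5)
Your subtraction route is the paper's route. The paper cancels $\mathbf{B}_{p-1}+\frac1p$ between the Glaisher--Beeger rewrites $V_p\equiv \mathbf{B}_{p-1}+\frac1p+1$ and $V_p^{*}\equiv \mathbf{B}_{p-1}+\frac1p$, which is the same as your cancelling of $W_p$.

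The trouble is that the two input congruences are false. Items (3) and (4) of Theorem~\ref{Ag}, as quoted with $\mathbf{B}_k/k!$, already fail at $p=7$:
$\sum_{k=1}^{5}\mathbf{B}_k/k!=-\frac12+\frac1{12}-\frac1{720}=-\frac{301}{720}\equiv 0 \pmod 7$, whereas $W_7=103\equiv 5 \pmod 7$.
Consequently $V_7^{*}\equiv 1$ and $V_7\equiv 2 \pmod 7$. So neither $V_7^{*}\equiv 1+W_7\equiv 6$ nor $V_7\equiv W_7+2\equiv 0$ holds. (With $\mathbf{B}_k/k$ in place of $\mathbf{B}_k/k!$ the congruences do check out numerically, so the theorem is presumably misquoted.) The subtraction still lands on the right answer only because (4) minus (3) is exactly $-2\mathbf{B}_1=1$, so the common error cancels. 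As a deduction, however, it rests on false premises.

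So reverse the roles in your write-up: the term-by-term computation is the proof, not a confirmation. It uses only two facts:
\begin{itemize}
\item $\mathbf{B}_k=0$ for odd $k\ge 3$;
\item the convention $\mathbf{B}_1=-\frac12$, which, as you note, is forced by Corollary~\ref{Vp} and is also the one fixed in the paper's recurrence section.
\end{itemize}
It involves no Wilson quotient at all. It gives the exact identity $V_p=V_p^{*}+1$ in $\mathbb{Q}$, and the congruence then follows at once because both sides are $p$-integral, as you observe. This is stronger than what the paper's argument delivers, and it is the actual reason the lemma is true.
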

\begin{proof}
From corollary \ref{ident},
    \begin{align*}
    V_p &\equiv\mathbf{B}_{p-1} + \frac{1}{p}+1 \pmod{p}\\
     &\equiv  V_{p}^{*} +1 \pmod{p}.
\end{align*}
\end{proof}


\begin{lemma}\label{Aby}
For any odd prime $p\geq 3$ and Wilson's quotient $W_p$, the $ V_{p}^{'}=\sum_{m=1}^{(p-3)/2} \frac{B_{2m}}{(2m)!}$ satisfies the following
\begin{enumerate}
    \item $V_{p}^{'} [!(2m)-1]\equiv (!(2m)-1)W_p +  \frac{1}{2}\left(!(2m)-1 \right) \pmod{p}$\\
    \item $V_{p}^{'} [!(2m)-1]\equiv \mathbf{B}_{!(2m)(p-1)}-\mathbf{B}_{p-1} +\frac{1}{2}\left(!(2m)-1 \right) \pmod{p}$
\end{enumerate}
    
\end{lemma}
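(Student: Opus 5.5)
The plan is to obtain both parts of Lemma \ref{Aby} by multiplying Corollary \ref{ident} by the integer $!(2m)-1$. Part (2) then follows by rewriting the Wilson-quotient term with Agoh's relation in Properties \ref{prop all}. Throughout, $!(2m)-1$ is an ordinary integer, so it can multiply any congruence modulo $p$. It should also be checked at the start that every $p$-adic quantity involved, $W_p$, $\frac{1}{2}$ and the Bernoulli values, lies in $\BZ_{(p)}$ or is handled as in the cited results. For $\frac12$ this holds because $p$ is odd.

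\textbf{Part (1).} By Corollary \ref{ident} we have $V_{p}^{'}\equiv \frac12 + W_p \pmod p$. Multiplying both sides by $!(2m)-1$ and distributing gives
$$V_{p}^{'}[!(2m)-1]\equiv (!(2m)-1)W_p+\tfrac12\left(!(2m)-1\right) \pmod p,$$
which is exactly item (1). The only care needed is that $V_p^{'}$ is itself a $p$-integral quantity. For $1\le 2m\le p-3$, von Staudt--Clausen says the denominator of $\mathbf{B}_{2m}$ is not divisible by $p$, and $(2m)!$ is prime to $p$, so multiplication preserves the congruence.

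\textbf{Part (2).} Next I would substitute for $(!(2m)-1)W_p$ using the Agoh congruence in Properties \ref{prop all}, $(n-k)W_p\equiv \mathbf{B}_{n(p-1)}-\mathbf{B}_{k(p-1)} \pmod p$. Taking $n=!(2m)$ and $k=1$ gives $(!(2m)-1)W_p\equiv \mathbf{B}_{!(2m)(p-1)}-\mathbf{B}_{p-1}\pmod p$, and inserting this into part (1) yields item (2).

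\textbf{Main obstacle.} The main obstacle is justifying that Agoh's congruence applies with $n=!(2m)$. That statement involves $\mathbf{B}_{n(p-1)}$ and $\mathbf{B}_{k(p-1)}$, which have $p$ in the denominator: $p\mathbf{B}_{j(p-1)}\equiv -1$. The congruence must therefore be read as a congruence of $p$-adic numbers in which the polar parts $-\frac1p$ cancel in the difference. This is consistent with the Glaisher form $W_p\equiv \mathbf{B}_{p-1}+\frac1p-1$. I would first check that the difference $\mathbf{B}_{n(p-1)}-\mathbf{B}_{k(p-1)}$ is $p$-integral, either by von Staudt--Clausen or by Kummer-type congruences. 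Then I would verify that Agoh's result holds for arbitrary positive integers $n>k$, not only for $n\le p-1$, since $!(2m)$ can be large. If that generality is not available, I would reduce $!(2m)$ modulo $p$ and use the periodicity of $\mathbf{B}_{j(p-1)}+\frac1p$ modulo $p$ in $j$.
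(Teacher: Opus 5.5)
Your route is the paper's route. You multiply Corollary~\ref{ident} by the integer $!(2m)-1$, then trade $(!(2m)-1)W_p$ for $\mathbf{B}_{!(2m)(p-1)}-\mathbf{B}_{p-1}$. The paper does this second step with Glaisher's congruence and Agoh's $nW_p\equiv \mathbf{B}_{n(p-1)}+\frac1p-1$ at $n=!(2m)$; your difference form with $k=1$ is the same thing. The step you flag as the main obstacle is harmless for $p\ge5$: $p\mathbf{B}_{n(p-1)}\equiv\sum_{a=1}^{p-1}a^{n(p-1)}\equiv p-1+npW_p \pmod{p^2}$ holds for every $n\ge1$ by Lerch's formula (Proposition~\ref{lerch}). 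So no reduction of $!(2m)$ modulo $p$ is needed.

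The genuine gap is the input to your first step. Corollary~\ref{ident} is false, so item (1) is false, and item (2) falls with it. Take $p=7$ and $m=1$, so $!(2)-1=1$ and item (1) is literally Corollary~\ref{ident}. Then $V_7'=\frac1{12}-\frac1{720}=\frac{59}{720}\equiv 4 \pmod 7$. On the other side, $W_7=103\equiv 5$, so $\frac12+W_7\equiv 4+5\equiv 2\pmod 7$. In item (2), $\mathbf{B}_{12}-\mathbf{B}_6=-\frac{18}{65}\equiv 5$, so its right-hand side is also $\equiv 2$, not $4$.

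The error enters through the quoted Theorem~\ref{Ag}(3), $\sum_{k=1}^{p-2}\mathbf{B}_k/k!\equiv W_p$, on which $V_p^*\equiv 1+W_p$ and hence Corollary~\ref{ident} rest. At $p=7$ that sum is $-\frac12+\frac1{12}-\frac1{720}=-\frac{301}{720}\equiv 0$, not $5$, and it fails again at $p=17$. The classical congruence has $\mathbf{B}_k/k$ in place of $\mathbf{B}_k/k!$. From $\sum_{k=0}^{p-1}\binom{p}{k}\mathbf{B}_k=0$ and $\binom{p}{k}\equiv(-1)^{k-1}p/k \pmod{p^2}$ one gets $\sum_{k=1}^{p-2}\mathbf{B}_k/k\equiv\mathbf{B}_{p-1}+\frac1p-1\equiv W_p$. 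Consequently $\sum_{m=1}^{(p-3)/2}\mathbf{B}_{2m}/(2m)\equiv W_p+\frac12 \pmod p$.

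With $(2m)!$ in the denominators no such reduction exists. What does hold is Vladimirov's $K_pV_p\equiv\sum_m\frac{\mathbf{B}_{2m}}{(2m)!}(!(2m)-1)$ (both sides are $\equiv 5$ at $p=7$), and it does not express $V_p'$ through $W_p$. So no argument can prove the lemma as stated; it must first be corrected, for instance by replacing $(2m)!$ with $2m$.

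Separately, the paper's proof begins by writing the Vladimirov sum $\sum_m\frac{\mathbf{B}_{2m}}{(2m)!}[!(2m)-1]$ as $V_p'[!(2m)-1]$, which pulls an index-dependent factor out of the sum. Your reading, with $m$ a fixed external integer, avoids that error. But under that reading the lemma says nothing about the sum it is later used for.
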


\begin{proof}
\begin{align*}
\sum_{m=1}^{\frac{(p-3)}{2}} \frac{\mathbf{B}_{2m}}{(2m)!} [!(2m)-1]&=V_{p}^{'} [!(2m)-1]\\
&\equiv  \left(\frac{1}{2} + W_p \pmod{p}\right) [!(2m)-1]\\
&\equiv \frac{!(2m)}{2}- \frac{1}{2}+ !(2m) W_p -W_p \pmod{p}\\
&\equiv \frac{!(2m)}{2}- \frac{1}{2}+ \mathbf{B}_{!(2m)(p-1)} + \frac{1}{p}-1 -\left( \mathbf{B}_{p-1} + \frac{1}{p}-1\right)\pmod{p}\\
&\equiv \frac{!(2m)}{2}- \frac{3}{2}+ \mathbf{B}_{!(2m)(p-1)}-\mathbf{B}_{p-1}+1\pmod{p}\\
&\equiv \mathbf{B}_{!(2m)(p-1)}-\mathbf{B}_{p-1} +\frac{!(2m)}{2}- \frac{1}{2} \pmod{p}\\
&\equiv \mathbf{B}_{!(2m)(p-1)}-\mathbf{B}_{p-1} +\frac{1}{2}\left(!(2m)-1 \right) \pmod{p}\\
&\equiv (!(2m)-1)W_p +  \frac{1}{2}\left(!(2m)-1 \right) \pmod{p}.
\end{align*}
\end{proof}

\begin{remark}\cite{zivkovic1999number, OEIS:A003422}
For $K_p=\sum_{ n=0}^{p-1} n!= 0! + 1! + 2! + \cdots + (p-2)! + (p-1)!$ it is possible to write
\begin{align*}
    !(2m)=K_{2m} = \sum_{k=0}^{2m-1} k! = 0! + 1! + 2! + \cdots + (2m-1)!.
\end{align*}

\end{remark}

\begin{lemma}
    For odd prime $p\geq 3$, the  $\sum_{k=1}^{p-2} \frac{(-1)^{k} B_k}{k!} + K_p \equiv  W_p + \mathbf{Bell}_{p-1}\pmod{p}$.
\end{lemma}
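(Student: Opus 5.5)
The plan is to split the left-hand side into its two summands, reduce each modulo $p$ with a result already stated earlier in the paper, and then add the two congruences. No new machinery should be required. The only care needed is to check that every quantity is $p$-integral, so that reduction modulo $p$ makes sense.

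First, the alternating Bernoulli sum. By part (4) of Theorem \ref{Ag} (Agoh), for an odd prime $p$,
\begin{align*}
\sum_{k=1}^{p-2} (-1)^k\frac{\mathbf{B}_k}{k!}\equiv W_p + 1 \pmod{p}.
\end{align*}
This is the same step used above to derive $V_p\equiv W_p+2 \pmod{p}$. I will note why the sum is $p$-integral.
\begin{itemize}
\item For $1\le k\le p-2$, the factorial $k!$ is prime to $p$.
\item By von Staudt--Clausen, the denominator of $\mathbf{B}_k$ involves only primes $q$ with $(q-1)\mid k$. Such primes satisfy $q\le k+1\le p-1<p$.
\end{itemize}
So each term lies in $\BZ_{(p)}$ and the congruence is meaningful.

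Second, the Kurepa term. By Property \ref{Gt}(5), equivalently the Wilson-theorem computation in Section \ref{sec 3}, we have $K_p\equiv \mathbf{Bell}_{p-1}-1 \pmod{p}$. Adding the two congruences gives
\begin{align*}
\sum_{k=1}^{p-2} \frac{(-1)^k \mathbf{B}_k}{k!} + K_p \equiv (W_p+1)+(\mathbf{Bell}_{p-1}-1) = W_p+\mathbf{Bell}_{p-1}\pmod{p},
\end{align*}
which is the claim.

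The only step with real content is Agoh's congruence (4) of Theorem \ref{Ag}, and we take it as given from \cite{agoh2022congruences}. If one wanted to avoid citing it, I would derive (4) from (3) of the same theorem. Since $\mathbf{B}_k=0$ for odd $k\ge3$, the alternating sum and the non-alternating sum differ only in the $k=1$ term. That term contributes $-\mathbf{B}_1=\tfrac12$ in one sum and $+\tfrac12$ in the other. Hence
\begin{align*}
\sum_{k=1}^{p-2}(-1)^k\frac{\mathbf{B}_k}{k!}=\sum_{k=1}^{p-2}\frac{\mathbf{B}_k}{k!}+1\equiv W_p+1 \pmod{p}.
\end{align*}
This reduces everything to part (3) of Theorem \ref{Ag}, which is the genuine arithmetic input, linking the Bernoulli sum to the Wilson quotient.
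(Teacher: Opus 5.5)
Your route is the same as the paper's. The paper also uses part (4) of Theorem~\ref{Ag} to replace the alternating sum by $W_p+1$, then applies $K_p\equiv\mathbf{Bell}_{p-1}-1$; its middle line through $\mathbf{B}_{p-1}+\frac1p$ is only a detour. However, the step you call ``the only step with real content'' cannot be taken as given. With $\mathbf{B}_k/k!$ in the summand, part (4) of Theorem~\ref{Ag} is false, and so is the lemma. Already at $p=7$,
\begin{align*}
\sum_{k=1}^{5}(-1)^k\frac{\mathbf{B}_k}{k!}=\frac12+\frac1{12}-\frac1{720}=\frac{419}{720}\equiv 1\pmod 7,\qquad W_7+1=104\equiv 6\pmod 7.
\end{align*}
Hence the left side of the lemma is $\equiv 1+K_7=1+874\equiv 0\pmod 7$. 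The right side is $W_7+\mathbf{Bell}_6=103+203=306\equiv 5\pmod 7$. The prime $p=17$ fails too.

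Your fallback via part (3) does not rescue this, because (3) fails at the same prime: $\sum_{k=1}^{5}\mathbf{B}_k/k!=-301/720\equiv 0$ while $W_7\equiv 5\pmod 7$. Your reduction (3)$\Rightarrow$(4) through the $k=1$ term is correct, and so are your $p$-integrality remarks. The problem is the quoted input, and the paper's proof inherits exactly the same defect.

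The congruence that does hold has $\mathbf{B}_k/k$ in place of $\mathbf{B}_k/k!$. Write $\binom{p}{k}=\frac{p}{k}\binom{p-1}{k-1}$ in $\sum_{k=0}^{p-1}\binom{p}{k}\mathbf{B}_k=0$ and use $\binom{p-1}{k-1}\equiv(-1)^{k-1}\pmod p$. This gives
\begin{align*}
\frac{1+p\mathbf{B}_{p-1}}{p}\equiv\sum_{k=1}^{p-2}(-1)^k\frac{\mathbf{B}_k}{k}\pmod p.
\end{align*}
Glaisher's congruence $W_p\equiv\mathbf{B}_{p-1}+\frac1p-1\pmod p$ then identifies the left side with $W_p+1$. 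At $p=7$ this checks out: $\frac12+\frac1{12}-\frac1{120}\equiv 4+3-1=6$.

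With this replacement your two-line argument goes through verbatim and proves $\sum_{k=1}^{p-2}(-1)^k\mathbf{B}_k/k+K_p\equiv W_p+\mathbf{Bell}_{p-1}\pmod p$. The statement as written, with $k!$, cannot be proved because it is false.
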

\begin{proof}
\begin{align*}
    \sum_{k=1}^{p-2} \frac{(-1)^{k} B_k}{k!} + K_p &\equiv  W_p + 1 + K_p \pmod{p}\\
                        & \equiv \mathbf{B}_{p-1} + \frac{1}{p} + K_p \pmod{p}\\
                      &\equiv  W_p + \mathbf{Bell}_{p-1}\pmod{p}.                       
\end{align*}
\end{proof}


\begin{theorem}\label{Francis}
\begin{align}\label{WV}
!p=K_p= \sum_{k=0}^{p-1} \frac{(-1)^{k+1} }{k!} \quad \mbox{and}\quad K_p\not\equiv 0 \pmod{p} \quad \mbox{for} \quad p=3,5,\cdots
\end{align}
then the following hold:
\begin{enumerate}
    \item if 
\begin{align}\label{modk}
    \left(W_p +\frac{1}{2}\right)(K_{2m}-1)  \not \equiv  0\pmod{p}
\end{align}
is true then the Kurepa conjecture \ref{WV} is true and the non-congruence
\begin{align}\label{wil}
   W_p + 2 \not\equiv 0 \pmod{p}
\end{align}
is true for $p=5,7,\cdots.$ 
\item If the non-congruence \ref{wil} is true, then the Kurepa conjecture \ref{WV} is equivalent to the non-congruence \ref{modk}.
\end{enumerate}
\end{theorem}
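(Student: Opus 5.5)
The plan is to reduce Theorem \ref{Francis} to Vladimirov's Theorem \ref{VK} by rewriting each of its two quantities modulo $p$ in terms of the Wilson quotient. Then the two parts of Theorem \ref{Francis} become the two parts of Theorem \ref{VK} with the hypotheses replaced by congruent expressions.

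\textbf{Step 1: Vladimirov's atom.} By the lemma giving $V_p\equiv W_p+2 \pmod{p}$, which rests on Theorem \ref{Ag}(4), the non-congruence \eqref{Ben}, $V_p\not\equiv 0\pmod p$, is equivalent to \eqref{wil}, $W_p+2\not\equiv 0\pmod p$. The left side $V_p$ is a $p$-integral rational: by von Staudt--Clausen, $\mathbf{B}_k$ for $k\le p-2$ has no $p$ in its denominator, and neither does $1/k!$. So reduction modulo $p$ is legitimate, and the equivalence holds exactly.

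\textbf{Step 2: Vladimirov's right-hand side.} Invoke Corollary \ref{ident}, $V_p'\equiv W_p+\tfrac12 \pmod p$, together with Lemma \ref{Aby}(1). This identifies the left side of \eqref{Vlad} with $\left(W_p+\tfrac12\right)(K_{2m}-1)$ modulo $p$, where $K_{2m}=!(2m)$ as in the remark following Lemma \ref{Aby}. Hence \eqref{Vlad} is equivalent to \eqref{modk}.

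\textbf{Step 3: transfer.} Part 1 follows from Theorem \ref{VK}(1): if \eqref{modk} holds then so does \eqref{Vlad}, giving the Kurepa conjecture \eqref{WV} and $V_p\not\equiv0$, which is \eqref{wil} by Step 1. For part 2, assume \eqref{wil}. Then \eqref{Ben} holds, so by Theorem \ref{VK}(2) the Kurepa conjecture is equivalent to \eqref{Vlad}, which is equivalent to \eqref{modk} by Step 2. The range $p=5,7,\dots$ is inherited directly from Theorem \ref{VK}.

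\textbf{Main obstacle.} The delicate step is Step 2. In \eqref{Vlad} the factor $!(2m)-1$ lies inside the sum over $m$, whereas Lemma \ref{Aby} pulls it outside as though it were constant in $m$. I would first read \eqref{modk} in the sense in which it is stated, with the same $m$-dependence as in Lemma \ref{Aby}, so that Step 2 is literally that lemma. If a rigorous version is needed, I would instead write \eqref{modk} as $\sum_m \frac{\mathbf{B}_{2m}}{(2m)!}(K_{2m}-1)$ and keep Vladimirov's form. The expression $(W_p+\tfrac12)(K_{2m}-1)$ would then be regarded only as the paper's notation for it, via Corollary \ref{ident}. I would flag this identification explicitly as the point where the argument depends on the paper's earlier conventions.
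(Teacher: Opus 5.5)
Your reduction is in substance the paper's own argument. The paper rewrites Vladimirov's congruence using $V_p\equiv W_p+2$ and Lemma~\ref{Aby}, obtains $K_p(W_p+2)\equiv (W_p+\tfrac{1}{2})(K_{2m}-1)\pmod p$, and reads both parts off this product; you push the hypotheses of Theorem~\ref{VK} through the same two congruences. \textbf{The problem is that this shared ingredient is false.}

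Step~1 uses $V_p\equiv W_p+2\pmod p$, which comes from Theorem~\ref{Ag}(3),(4) as quoted with $\mathbf{B}_k/k!$. It already fails at $p=7$: $V_7=1+\tfrac{1}{2}+\tfrac{1}{12}-\tfrac{1}{720}=\tfrac{1139}{720}\equiv 2\pmod 7$, whereas $W_7+2=105\equiv 0\pmod 7$. Likewise $V_7'=\tfrac{59}{720}\equiv 4$ while $W_7+\tfrac{1}{2}\equiv 2\pmod 7$, so Corollary~\ref{ident} fails, and your Step~2 with it.

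What is true is the analogue with $\mathbf{B}_k/k$. From $\sum_{k=0}^{p-1}\binom{p}{k}\mathbf{B}_k=0$, $\binom{p}{k}\equiv(-1)^{k-1}p/k\pmod{p^2}$ for $1\le k\le p-1$, and Glaisher's $W_p\equiv \mathbf{B}_{p-1}+\tfrac{1}{p}-1$, one gets $\sum_{k=1}^{p-2}\mathbf{B}_k/k\equiv W_p$ and $1+\sum_{k=1}^{p-2}(-1)^k\mathbf{B}_k/k\equiv W_p+2\pmod p$. This agrees with $V_p$ only for $p\le 5$, where $k!=k$ for every $k\le p-2$ with $\mathbf{B}_k\neq 0$. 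Your von Staudt--Clausen remark only shows that $V_p$ can be reduced modulo $p$; it does not give the congruence.

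No repair is possible, because Part~1 is false. At $p=7$, Vladimirov's sum is $\tfrac{1}{12}(K_2-1)-\tfrac{1}{720}(K_4-1)=\tfrac{51}{720}\equiv 5\not\equiv 0\pmod 7$, consistent with $K_7V_7\equiv 6\cdot 2\equiv 5$. The literal expression $(W_7+\tfrac{1}{2})(K_{2m}-1)\equiv 2(K_{2m}-1)$ is nonzero for $m=1,2$. So the hypothesis of Part~1 holds under either reading, and the Kurepa conjecture holds at $7$ since $K_7=874\equiv 6$. Yet $W_7+2\equiv 0\pmod 7$, so the conclusion fails.

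In particular, your fallback of reading the hypothesis as Vladimirov's sum rescues nothing. Theorem~\ref{VK} then delivers $V_p\not\equiv 0$, which is not equivalent to $W_p+2\not\equiv 0$. You were right that pulling $K_{2m}-1$ outside the sum over $m$ is illegitimate. But the decisive failure is identifying $V_p$ and $V_p'$ with $W_p+2$ and $W_p+\tfrac{1}{2}$, a step the paper's proof shares.
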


\subsection{Vladimirov's analogue and $W_p \pmod{p}$}
Let $\mathbf{B}_k$ be the $k$-th Bernoulli number satisfying the recurrence relation \cite{koblitz2012p}:

$$n\mathbf{B}_{n-1} + 1 + \sum_{k=1}^{n-2} \mathbf{B}_k \binom{n}{k} = 0$$
with $\mathbf{B}_0=1$, $\mathbf{B}_1=-\frac{1}{2}$ and $\mathbf{B}_{2n+1}=0$. Now 
we use $\sum_{n=2}^{p-1} (-1)^{n+1}\frac{1}{n!}$ and the recurrence relation above to obtain

\begin{align*}
    \sum_{n=2}^{p-1} (-1)^{n+1}\frac{ nB_{n-1}}{n(n-1)!} + \sum_{n=2}^{p-1} \frac{(-1)^{n+1}}{n!}+\sum_{n=2}^{p-1} \sum_{k=1}^{n-2} \frac{(-1)^{n+1} B_k}{n!} \frac{n!}{k!(n-k)!}=0 \\ 
    \mbox{We set $k=n-1$ and take $p\geq 3$}\\
  \sum_{k=1}^{p-2} \frac{(-1)^{k} B_k}{k!} + \sum_{n=2}^{p-1} \frac{(-1)^{n+1}}{n!} +\sum_{n=2}^{p-1} \sum_{k=1}^{n-2} \frac{(-1)^{n+1}}{(n-k)!}\frac{B_k }{k!}&\equiv 0 \pmod{p} \\
 \sum_{k=1}^{p-2} \frac{(-1)^{k} B_k}{k!} + K_p + \sum_{k=1}^{p-3} \frac{B_k}{k!} \sum_{n=k+2}^{p-1} \frac{(-1)^{n+1}}{(n-k)!}&\equiv 0 \pmod{p}\\
 \sum_{k=1}^{p-2} \frac{(-1)^{k} B_k}{k!} + K_p + \sum_{k=1}^{p-3} \frac{B_k}{k!} (-1)^{k}(K_p-!k)&\equiv 0 \pmod{p}\\
  \sum_{k=1}^{p-2} \frac{(-1)^{k} B_k}{k!} + K_p +\sum_{m=1}^{\frac{(p-3)}{2}} \frac{\mathbf{B}_{2m}}{(2m)!} [!(2m)-1]&\equiv 0 \pmod{p}\\
  W_p + 1 + K_p+V_{p}^{'} [!(2m)-1]&\equiv 0 \pmod{p}\\
 \mathbf{B}_{p-1} + \frac{1}{p} + K_p + \mathbf{B}_{!(2m)(p-1)}-\mathbf{B}_{p-1} +\frac{1}{2}\left(!(2m)-1 \right) &\equiv 0 \pmod{p}\\
 \mathbf{B}_{!(2m)(p-1)}-\mathbf{B}_{p-1}+\mathbf{B}_{p-1}+\frac{1}{p} +\frac{1}{2}\left(2K_p+!(2m)-1 \right) &\equiv 0 \pmod{p}\\
  \mathbf{B}_{!(2m)(p-1)}+\frac{1}{p} +\frac{1}{2}\left(2K_p+!(2m)-1 \right) &\equiv 0 \pmod{p}\\
   \mathbf{B}_{!(2m)(p-1)}+\frac{1}{p}+ K_p +\frac{1}{2}\left(!(2m)-1 \right) &\equiv 0 \pmod{p}\\
    \mathbf{B}_{!(2m)(p-1)}+\frac{1}{p}-1+ \mathbf{Bell}_{p-1} +\frac{1}{2}\left(!(2m)-1 \right) &\equiv 0 \pmod{p}\\
!(2m)W_p +  \frac{1}{2}\left(!(2m)-1 \right) +\mathbf{Bell}_{p-1}&\equiv 0 \pmod{p}.  
\end{align*}

\subsection{Proof of Theorem \ref{Francis}}
Given that 
\begin{equation}
!p \sum_{k=0}^{p-2} (-1)^k \frac{\mathbf{B}_k}{k!} \equiv \sum_{m=1}^{\frac{(p-3)}{2}} \frac{\mathbf{B}_{2m}}{(2m)!} [!(2m)-1] \pmod{p}
\end{equation}

\begin{align*}
!pV_p&=K_p V_p\\
= K_p \sum_{k=0}^{p-2} (-1)^k \frac{\mathbf{B}_k}{k!}&= K_p\left(\frac{\mathbf{B}_0}{0!}+ \sum_{k=1}^{p-2} (-1)^k \frac{\mathbf{B}_k}{k!} \right)\\
&= K_p\left(1 + \sum_{k=1}^{p-2} (-1)^k \frac{\mathbf{B}_k}{k!} \right)\equiv K_p\left(1 +  W_p + 1 \pmod{p} \right)\\
&\equiv K_p\left(  W_p + 2\pmod{p}  \right)\\
&\equiv K_p\left(  W_p + \mathbf{Bell}_{p}  \pmod{p}\right) \\
&\equiv \mathbf{B}_{K_p(p-1)}+\frac{1}{p}-1+ K_p\mathbf{Bell}_{p}\pmod{p}\\
&\equiv \mathbf{B}_{(\mathbf{Bell}_{p-1}-1)(p-1)}+\frac{1}{p}-1 \\
&+ (\mathbf{Bell}_{p-1}-1)\mathbf{Bell}_{p}\pmod{p}.
\end{align*}
We see that $K_p V_p$ is also equivalent to
\begin{align*}
    K_p\left(  W_p + 2\pmod{p}  \right)&\equiv K_p W_p + 2K_p \pmod{p}\\
    &\equiv \mathbf{B}_{K_p(p-1)}+\frac{1}{p}-1+ 2K_p\pmod{p}\\
    &\equiv (\mathbf{Bell}_{p-1}-1)W_p +2(\mathbf{Bell}_{p-1}-1)\pmod{p}\\
     &\equiv \mathbf{B}_{(\mathbf{Bell}_{p-1})(p-1)}-\mathbf{B}_{p-1}+ 2K_p\pmod{p}.
\end{align*}
Following the equivalent of Vladimirov \cite{TA}, we can see that:
\begin{align*}
&!p \sum_{k=0}^{p-2} (-1)^k \frac{\mathbf{B}_k}{k!} \equiv \sum_{m=1}^{\frac{(p-3)}{2}} \frac{\mathbf{B}_{2m}}{(2m)!} [!(2m)-1] \pmod{p}\\
&\mathbf{B}_{(\mathbf{Bell}_{p-1})(p-1)}-\mathbf{B}_{p-1}+ 2K_p \equiv \mathbf{B}_{!(2m)(p-1)}-\mathbf{B}_{p-1} +\frac{1}{2}\left(!(2m)-1 \right) \pmod{p}\\
&\mathbf{B}_{(\mathbf{Bell}_{p-1})(p-1)}-\mathbf{B}_{p-1}+ 2K_p \equiv \mathbf{B}_{K_{2m}(p-1)}-\mathbf{B}_{p-1} +\frac{1}{2}\left(K_{2m}-1 \right) \pmod{p}\\
 &(\mathbf{Bell}_{p-1}-1)W_p +2K_p\equiv (!(2m)-1)W_p +  \frac{1}{2}\left(!(2m)-1 \right) \pmod{p} \\
& K_p W_p + 2K_p \equiv \left(W_p +\frac{1}{2}\right)(!(2m)-1) \pmod{p}\\
&K_p(W_p + 2) \equiv \left(W_p +\frac{1}{2}\right)(K_{2m}-1) \pmod{p}.
\end{align*}

\subsection{Faber-Pandhariponde constant $b_g \quad\mbox{and}\quad K_p \pmod{p}$}\label{sec 6}
Hodge integrals arise naturally in Gromov-Witten theory
and on $\bar{\mathcal{M}}_{g,n}$, Hodge integrals emerged in the examination of tautological degeneracy loci of the Hodge bundle \cite{getzler1998virasoro} \cite{eguchi1997quantum, bryan2000enumerative}. In this section, we shall consider the behavior of some Hodge integral constant $b_g$ in terms of modulo prime $p.$ This is to see if there is any natural information enummerative geometry can tell us about the Kurepa modulo prime.

\begin{theorem}[Faber and Pandhariponde \cite{faber1998hodge}]
Define the series $F(t,k)\in \mathbb{Q}[k][[t]]$ by
$$F(t,k)=1+ \sum_{g\geq1}\sum_{i=1}^{g}t^{2g}k^i \int_{\bar{\mathcal{M}}_{g,1}}\psi^{2g-2+i}_n \lambda_{g-i} $$
then
    $$F(t,k)=\left(\dfrac{\frac{t}{2}}{\sin{\frac{t}{2}}} \right)^{k+1}.$$
In particular, the integrals $b_g$ and $c_g$ are determined by 
\begin{equation}
    \sum_{g\geq 0} b_g t^{2g}=\dfrac{\frac{t}{2}}{\sin{\frac{t}{2}}}, 
\end{equation}
$$ \sum_{g\geq 0} c_g t^{2g}=\left(\dfrac{\frac{t}{2}}{\sin{\frac{t}{2}}} \right) \log \left(\dfrac{\frac{t}{2}}{\sin{\frac{t}{2}}} \right). $$
\end{theorem}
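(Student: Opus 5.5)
The plan is to follow Faber and Pandharipande's route: obtain relations among Hodge integrals from virtual torus localization on a space of stable maps to $\mathbb{P}^1$, and then solve the resulting generating-function equations. I take the standard facts about $\bar{\mathcal{M}}_{g,n}$ as known. These are the string and dilaton equations for $\psi$-classes, Mumford's relation $c(\mathbb{E})c(\mathbb{E}^\vee)=1$, the vanishing $\lambda_g^2=0$, and the Graber--Pandharipande virtual localization formula. None of these appear earlier in the paper, so they enter as outside input rather than as consequences of the number-theoretic results above.

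\textbf{Step 1: the $k$-linear coefficient.} First isolate the coefficient of $k^1$. Take the $\mathbb{C}^*$-action on $\mathbb{P}^1$ and the moduli space $\bar{\mathcal{M}}_{g,1}(\mathbb{P}^1,1)$. Choose an equivariant integrand of dimension strictly larger than the virtual dimension $2g$, built from $\mathrm{ev}^*$ of a fixed point class times a power of $\psi$. Its nonequivariant integral then vanishes.

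Localization writes this zero as a sum over fixed loci. Each fixed locus is a product $\bar{\mathcal{M}}_{g_1,1}\times\bar{\mathcal{M}}_{g_2,1}$ with $g_1+g_2=g$, joined by the degree-one edge. The vertex contributions are Hodge integrals $\int\psi^{a}\lambda_{g_i-j}$, which come from expanding $\frac{c(\mathbb{E}^\vee)}{1-\psi}$-type Euler class factors. Summing over $g$ turns the vanishing relations into the statement that the series $\sum_g b_g t^{2g}$ with $b_g=\int_{\bar{\mathcal{M}}_{g,1}}\psi^{2g-2}\lambda_g$ satisfies a product identity. In Faber--Pandharipande this identity is equivalent to
$$\Big(\sum_g b_g t^{2g}\Big)\cdot\frac{\sin(t/2)}{t/2}=1.$$
This forces $\sum_g b_g t^{2g}=\frac{t/2}{\sin(t/2)}$, once the normalization $b_0=1$ is fixed and $b_1=\tfrac1{24}$ is checked.

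\textbf{Step 2: the full $k$-dependence.} Next show that $F(t,k)$ has the form $G(t)^{k+1}$. The plan is to derive a differential equation in $k$, i.e. to show that $\partial_k\log F$ is independent of $k$. The same localization on $\bar{\mathcal{M}}_{g,n}(\mathbb{P}^1,1)$, now with $n$ extra marked points whose $\psi$-insertions are removed by the string and dilaton equations, gives relations among the integrals $\int\psi^{2g-2+i}\lambda_{g-i}$ for different $i$. Mumford's relation is used to trade $\lambda_{g-i}$ monomials. These relations should assemble into the multiplicative identity $F(t,k_1+k_2+1)=F(t,k_1)F(t,k_2)$, and with $F(t,0)=G(t)$ this gives $F=G^{k+1}$.

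\textbf{Step 3: conclusion and main obstacle.} Taking $G(t)=\sum_g b_g t^{2g}$ from Step 1 gives $F(t,k)=\big(\frac{t/2}{\sin(t/2)}\big)^{k+1}$. The $c_g$ statement then follows by differentiating at $k=0$: $\partial_kF|_{k=0}=G\log G$, and identifying the coefficients with the corresponding $k$-linear Hodge integrals defines $c_g$.

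I expect the main obstacle to be the combinatorics of the fixed-point contributions in Steps 1 and 2. One has to choose the auxiliary integrand so that most graphs vanish, and then check that the surviving vertex terms reorganize exactly into a convolution of generating series. My first attempt would use the lift with the $\mathrm{ev}^*$ insertion concentrated at one fixed point. This kills all graphs with a nontrivial vertex at the other point, leaving a one-vertex sum that is directly the required convolution.
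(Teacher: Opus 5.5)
The paper gives no proof of this statement; it only quotes it from Faber--Pandharipande. Your plan therefore has to stand on its own, and it has a genuine gap.

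\textbf{Normalization.} With $\sum_{i=1}^{g}$ as printed, $F(t,0)=1$, which contradicts the claimed formula at $k=0$. You are tacitly using Faber--Pandharipande's $\sum_{i=0}^{g}$, under which $F(t,0)=\sum_g b_g t^{2g}$ with $b_g=\int_{\bar{\mathcal{M}}_{g,1}}\psi_1^{2g-2}\lambda_g$. You should say so. Note that $b_g$ is the $k^0$ coefficient, not the $k$-linear one as your Step 1 heading says; the $k$-linear coefficients are the $c_g=\int\psi_1^{2g-1}\lambda_{g-1}$.

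\textbf{Step 1: the relation mechanism is wrong.} Take an equivariant class of degree $m$ larger than the virtual dimension $D$. Its equivariant integral is $c\,t^{m-D}$, and the nonequivariant limit vanishes whatever $c$ is, so the localization sum is not constrained at all. For example, $\int_{\mathbb{P}^1}[0]^2=t\neq 0$. Relations come from integrands of degree $m<D$, or from comparing two lifts of the same class.

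There are smaller errors too. $\bar{\mathcal{M}}_{g,1}(\mathbb{P}^1,1)$ has virtual dimension $2g+1$, and its fixed loci are $\bar{\mathcal{M}}_{g_1,2}\times\bar{\mathcal{M}}_{g_2,1}$. Also, $\mathrm{ev}_1^*[0]$ only kills loci where the marking lies over $\infty$; it does not kill vertices over $\infty$.

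\textbf{Step 1 fails even after repair.} Degree-one localization does not isolate $b_g$. At a genus-$h$ vertex over a fixed point of weight $w$, the Hodge classes enter only through $\Lambda^\vee_h(w)/(w-\psi)$, with the same $w$ in both places. This gives $\int_{\bar{\mathcal{M}}_{h,1}}\Lambda^\vee_h(w)/(w-\psi)=(-1)^h w^{1-2h}\,[t^{2h}]F(t,-1)$. Consequently the relation from $\int 1$ on $\bar{\mathcal{M}}_{g}(\mathbb{P}^1,1)$ is just $F(t,-1)^2=1$, which is the trivial case $k=-1$. Putting $\psi$-powers at a marking produces two-pointed integrals on $\bar{\mathcal{M}}_{h,2}$, which can only be used recursively together with Witten--Kontsevich numbers. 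Nothing in this setup supplies a known series equal to $\frac{\sin(t/2)}{t/2}$. Your ``product identity'' is therefore the theorem restated, and the one-vertex sum you end with is not a convolution.

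\textbf{Step 2 has the same defect.} In degree one nothing changes the relative weight of $\lambda_{g-i}$ against $\psi^{2g-2+i}$, so there is no geometric source for a general $k$. There is then no reason the relations should ``assemble'' into $F(t,k_1+k_2+1)=F(t,k_1)F(t,k_2)$. The algebra from that identity to $G^{k+1}$ is fine.

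\textbf{The missing idea} is a geometric realization of $k$. At a vertex attached to an edge of degree $d$, the smoothing factor has weight $w/d$ while the Hodge factor is still $\Lambda^\vee_h(w)$. This gives
\[
\int_{\bar{\mathcal{M}}_{h,1}}\frac{\Lambda^\vee_h(w)}{w/d-\psi}=(-1)^h w^{1-2h}d^{2h-1}\,[t^{2h}]F(t,-d).
\]
So evaluating fully ramified degree-$d$ contributions gives $F(t,-d)=\bigl(\frac{\sin(t/2)}{t/2}\bigr)^{d-1}$ for all $d\ge 1$. One way to do this is the ELSV formula combined with the closed formula for one-part Hurwitz numbers. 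Each $t^{2g}$-coefficient on both sides is a polynomial of degree $\le g$ in $k$, so these values determine $F(t,k)$, and $b_g$ follows at $k=0$. Alternatively, you can start from Mumford's Grothendieck--Riemann--Roch formula for $\mathrm{ch}(\mathbb{E})$, whose coefficients are exactly Bernoulli numbers.
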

This theorem has direct application to the Gromov-Witten theory to multiple cover formula for Calabi-Yau-3-folds \cite{getzler1998virasoro} \cite{eguchi1997quantum, bryan2000enumerative}. 

    \begin{equation*}
 b_g=
\begin{cases}
	\text{1}    & \text{if  $g=0$;}\\
	\text{$\int_{\bar{\mathcal{M}}_{g,1}}\psi^{2g-2}_1 \lambda_{g} $}   & \text{ if $g> 0$}.
\end{cases}
 \end{equation*}
Now we shall focus on the integral $b_g$, we put this as:
\begin{equation}
    b_g=\frac{(2-2^{2g})}{2^{2g}}\frac{\mathbf{B}_{2g}}{(2g)!}
\end{equation}
Up to a genus of $6$, we can know the value of $b_g$ such as; $b_1= -\frac{1}{24}$, $b_2= \frac{7}{5760}$, $b_3= -\frac{31}{9676780}$, $b_4= \frac{127}{154828800}$, $b_5= -\frac{73}{3503554560}$ and $b_6=\frac{1414477}{2678117105664000}$.\\

\begin{example}\label{genus}
For genus $g$ we shall check the following left factorial property for $!(2g)=K_{2g}$, where 
$$!(2g)=K_{2g}=\sum_{r=0}^{2g-1} r!= 0! + 1! + 2! + \cdots + (2g-1)!.$$ 
We compute few examples of $!(2g) b_g$ as follows:
\begin{itemize}
 \item $(!0)b_0= b_0=1$
    \item $(!2)b_1= 2\cdotp b_1$
    \item $(!4)b_2= 10\cdotp b_2$
   \item $(!6)b_3= 154\cdotp b_3$
\end{itemize}
\end{example}

\subsection{Genus $g$ and kurepa factorial}

In particular, we can see that
\begin{align*}
    !(2g)=K_{2g}=\sum_{r=0}^{2g-1} r!&= 0! + 1! + 2! + \cdots+(2g-2)! + (2g-1)!\\
    !(2g)=\sum_{r=0}^{2g-1} r!&= 0! + 1! + 2! + \cdots +(2g-2)!+ (2g-1)!\\
    !(2g)&= 0! + 1! + 2! + \cdots +(2g-2)!+ (2g-1)!\\
    !(2g)&=\sum_{r=0}^{2g-2} r!+ (2g-1)!\\
    !(2g)&=!(2g-1) + (2g-1)!\\
    K_{2g}&= K_{2g-1}+ (2g-1)!
\end{align*}
it is easy to see that 
\begin{equation}
    (2g-1)!=  K_{2g}-  K_{2g-1}= !(2g)- !(2g-1)
\end{equation}
one can generalize this recurssion to a lemma as follows:
\begin{lemma}
    For all genus $g>0$, $$(2g-1)!=  K_{2g}-  K_{2g-1}= !(2g) - !(2g-1)$$
    and \cite{kurepa1971left} if $$!(n+1)= !n + n!$$ for all $n\in \BN.$ Then
    $$(2g-(n+1))!=  K_{2g-n}-  K_{2g-(n+1)}= !(2g-n) - !(2g-(n+1))$$
    and 
     $$(2g+n-1)!=  K_{2g+n}-  K_{(2g+n-1)}= !(2g+n) - !(2g+n-1)$$
    for $n\geq 0.$
\end{lemma}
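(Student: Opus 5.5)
The plan is to obtain everything from the single recursion $!(n+1)=!n+n!$, i.e.\ $K_{n+1}=K_n+n!$, which is immediate from the definition $K_{n}=\sum_{r=0}^{n-1}r!$. Splitting off the last summand gives $K_{n+1}=\sum_{r=0}^{n-1}r!+n!=K_n+n!$. This is exactly the telescoping step already carried out in the computation preceding the lemma.

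For the first assertion, I will apply the recursion with $n=2g-1$, which is allowed since $g>0$ gives $2g-1\geq 1$. This yields $K_{2g}=K_{2g-1}+(2g-1)!$. Subtracting $K_{2g-1}$ from both sides gives $(2g-1)!=K_{2g}-K_{2g-1}=!(2g)-!(2g-1)$.

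For the two shifted identities, I will substitute $N=2g-(n+1)$ and $N=2g+n-1$ into $K_{N+1}-K_N=N!$.
\begin{itemize}
\item With $N=2g-(n+1)$ we have $N+1=2g-n$, which gives $(2g-(n+1))!=K_{2g-n}-K_{2g-(n+1)}$.
\item With $N=2g+n-1$ we have $N+1=2g+n$, which gives $(2g+n-1)!=K_{2g+n}-K_{2g+n-1}$.
\end{itemize}

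The only real obstacle is the range of validity in the first shifted identity. We need $N=2g-n-1\geq 0$, so that $N!$ and $K_N$ make sense with the conventions $K_0=0$ and $K_1=0!=1$. I will therefore restrict to $0\leq n\leq 2g-1$ there, and note that the recursion $K_{N+1}=K_N+N!$ also holds at $N=0$ under the empty-sum convention. The second shifted identity holds for all $n\geq 0$ with no restriction.
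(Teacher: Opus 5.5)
Your proposal is correct and is essentially the paper's own argument. The paper splits off the last summand to get $K_{2g}=K_{2g-1}+(2g-1)!$ and then generalizes via Kurepa's recursion $!(n+1)=!n+n!$, exactly as you do by substituting $N=2g-1$, $N=2g-(n+1)$ and $N=2g+n-1$ into $K_{N+1}-K_N=N!$. Your restriction $0\le n\le 2g-1$ in the first shifted identity is a legitimate correction: under the statement's ``for $n\geq 0$'', larger $n$ would require the factorial of a negative integer.
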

Following this lemma we can compute few arithmetic examples as follows:

\begin{example}
For $(2g-(n+1))!=  K_{2g-n}-  K_{2g-(n+1)}$ we check for $n=0,1,2.$
\begin{itemize}
 \item $(2g-1)!=  K_{2g}-  K_{2g-1}= !(2g) - !(2g-1)$
    \item $(2g-2)!=  K_{2g-1}-  K_{2g-2}= !(2g-1) - !(2g-2)$
    \item $(2g-3)!=  K_{2g-2}-  K_{2g-3}= !(2g-2) - !(2g-3)$
\end{itemize}

Similarly we check for $(2g+n-1)!=  K_{2g+n}-  K_{(2g+n-1)}= !(2g+n) - !(2g+n-1)$
\begin{itemize}
 \item $(2g-1)!=  K_{2g}-  K_{2g-1}= !(2g) - !(2g-1)$
    \item $(2g)!=  K_{2g+1}-  K_{2g}= !(2g+1) - !(2g)$
    \item $(2g+1)!=  K_{2g+2}-  K_{2g+1}= !(2g+2) - !(2g+1)$
   \item $(2g+2)!=  K_{2g+3}-  K_{2g+2}= !(2g+3) - !(2g+2).$
\end{itemize}

\end{example}

\begin{theorem}
    Let  $n\in \BN$ and fix $n\geq 0$, for genus $g=g_1 + g_2$ with $g_1, g_2 >0$ then
   \begin{align*}
    (2(g_1 + g_2)-(n+1))!&=  K_{2(g_1 + g_2)-n}-  K_{2(g_1 + g_2)-(n+1)}\\
    &= !(2(g_1 + g_2)-n) - !(2(g_1 + g_2)-(n+1))
     \end{align*}
    and 
\begin{align*}
    (2(g_1 + g_2)+n-1)!&=  K_{2(g_1 + g_2)+n}-  K_{(2(g_1 + g_2)+n-1)}\\
    &= !(2(g_1 + g_2)+n) - !(2(g_1 + g_2)+n-1).
\end{align*}

\end{theorem}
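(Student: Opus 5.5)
The plan is to obtain both identities as direct specializations of the preceding lemma, together with Kurepa's basic recursion $!(n+1)=\,!n+n!$. For natural numbers the left factorial is the finite sum $K_N=\sum_{r=0}^{N-1} r!$. So for any integer $N\geq 1$, subtracting the sums for $N$ and $N-1$ leaves exactly one term:
\[
K_N-K_{N-1}=\sum_{r=0}^{N-1} r!-\sum_{r=0}^{N-2} r! = (N-1)!.
\]
This telescoping identity is the only ingredient. The whole statement reduces to choosing $N$ correctly and checking that the index lies in the range where the identity is valid.

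For the first identity, set $g=g_1+g_2$ and take $N=2g-n$. The identity above then reads
\[
K_{2g-n}-K_{2g-(n+1)}=(2g-n-1)!=(2g-(n+1))!,
\]
and rewriting $K_M$ as $!M$ gives the second equality. This is verbatim the first part of the preceding lemma with $g$ replaced by $g_1+g_2$. The only point to address is that $g_1,g_2>0$ gives $g\geq 2$, so $g$ is a legitimate genus with $g>0$ and the lemma applies.

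For the second identity, take $N=2g+n$ with $g=g_1+g_2$. Since $n\geq 0$ and $g\geq 2$, we have $N\geq 4$, so the telescoping step always applies and yields
\[
K_{2g+n}-K_{2g+n-1}=(2g+n-1)!,
\]
with no further conditions.

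The only genuine obstacle is the range of $n$ in the first identity. The expression $(2(g_1+g_2)-(n+1))!$ and $K_{2(g_1+g_2)-(n+1)}$ make sense only when $2(g_1+g_2)-(n+1)\geq 0$, that is, $n\leq 2(g_1+g_2)-1$. I will therefore state this restriction explicitly. Under the standard convention $K_0=\,!0=0$, the boundary case $n=2g-1$ gives $K_1-K_0=1=0!$, so the identity still holds there. No congruences or arithmetic facts about primes are needed.
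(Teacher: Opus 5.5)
Your proof is correct, and it takes a different route from the paper. The paper derives the theorem from a corollary that splits the genus. That corollary asserts $(2g_1-1)!+(2g_2-1)!=(2(g_1+g_2)-1)!$. It obtains this by adding the two one-step identities and claiming $!(2g_1)+!(2g_2)-!(2g_1-1)-!(2g_2-1)=\,!(2(g_1+g_2))-!(2(g_1+g_2)-1)$, which treats factorials and left factorials as additive in the index. This intermediate step is false already for $g_1=g_2=1$: the left side is $1!+1!=2$, while $3!=\,!4-!3=10-4=6$. The final identity $(2(g_1+g_2)-1)!=K_{2(g_1+g_2)}-K_{2(g_1+g_2)-1}$ that the corollary reaches happens to be true, but the corollary only treats $n=0$.

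You observe instead that the decomposition $g=g_1+g_2$ plays no role. The statement is just the telescoping identity $K_N-K_{N-1}=(N-1)!$, i.e.\ Kurepa's recursion or the preceding lemma with $g$ replaced by $g_1+g_2\ge 2$, applied with $N=2g-n$ and $N=2g+n$. This gives a valid proof for every admissible $n$. Your added restriction $n\le 2(g_1+g_2)-1$ in the first identity, with $K_0=0$ at the boundary, is genuinely needed: as written, the statement allows $n$ for which $(2(g_1+g_2)-(n+1))!$ is undefined.

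The paper's route seems aimed at linking the identity to the genus splitting $g_1+g_2=g$ in the Faber--Pandharipande-type sums later in that section. Since it rests on a false additivity step, it adds nothing beyond the direct argument, so yours is the one to keep.
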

To prove this, we shall state a corollary which explains the details 

\begin{corollary}
    For genus $g=g_1 + g_2$ with $g_1, g_2 >0$ and
    \begin{align*}
        (2g_1-1)!=  K_{2g_1}-  K_{2g_1-1}= !(2g_1) - !(2g_1-1)\\
        (2g_2-1)!=  K_{2g_2}-  K_{2g_2-1}= !(2g_2) - !(2g_2-1)
    \end{align*}
then
$$  (2g_1-1)! +  (2g_2-1)!=(2(g_1 + g_2)-1)!$$
and 
 $$(2(g_1 + g_2)-1)!=  K_{2(g_1 + g_2)}-  K_{2(g_1 + g_2)-1}= !2(g_1 + g_2) - !(2(g_1 + g_2)-1).$$

\end{corollary}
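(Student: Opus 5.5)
The second displayed identity can be proved directly from the left-factorial recursion, but the first displayed identity is false in general, so I will treat the two parts separately.

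\textbf{Second identity.} Set $g=g_1+g_2$, which satisfies $g\geq 2$. The recursion $!(n+1)=!n+n!$ is recorded in the preceding lemma from \cite{kurepa1971left}. Apply it with $n=2g-1$. This gives $(2(g_1+g_2)-1)! = K_{2(g_1+g_2)}-K_{2(g_1+g_2)-1}$ immediately. It is the case $n=0$ of the preceding lemma at genus $g$, and it does not use the hypotheses on $g_1$ and $g_2$ at all. The same computation, applied at $g_1$ and at $g_2$ separately, also recovers the two displayed hypotheses.

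\textbf{First identity.} The bridging claim $(2g_1-1)!+(2g_2-1)!=(2(g_1+g_2)-1)!$ cannot be proved as stated, because it fails already for the smallest case. With $g_1=g_2=1$ the left side is $1!+1!=2$ and the right side is $3!=6$. In general, for $g_1,g_2\geq 1$ the right side $(2g-1)!$ grows far faster than the sum on the left. Expanding both sides with the lemma does not rescue it either: that yields only $K_{2g_1}-K_{2g_1-1}+K_{2g_2}-K_{2g_2-1}$ against $K_{2g}-K_{2g-1}$, and no telescoping relates the two.

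\textbf{What I would do instead.} I would replace the additive statement by a correct multiplicative one. Since $(2g_1-1)!\,(2g_2-1)!$ divides $(2g_1+2g_2-2)!$ via a binomial coefficient, the right relation is
\[
(2g-1)! \;=\; (2g-1)\binom{2g-2}{2g_1-1}\,(2g_1-1)!\,(2g_2-1)! .
\]
Combining this with the two hypotheses gives
\[
K_{2g}-K_{2g-1} \;=\; (2g-1)\binom{2g-2}{2g_1-1}\bigl(K_{2g_1}-K_{2g_1-1}\bigr)\bigl(K_{2g_2}-K_{2g_2-1}\bigr).
\]
This is exactly the kind of link between genera that the corollary was aiming for.

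\textbf{Main obstacle.} The only real obstacle is this false additivity step. I would flag it explicitly, supply the counterexample above, and use the multiplicative replacement in its place. Nothing in the preceding theorem actually depends on the additive version, since its conclusion follows from the recursion alone.
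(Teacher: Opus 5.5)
Your diagnosis is correct, and it departs from the paper exactly where the paper's argument breaks. The paper substitutes the two hypotheses to get $!(2g_1)+!(2g_2)-!(2g_1-1)-!(2g_2-1)$. It then simply regroups this as $!(2(g_1+g_2))-!(2(g_1+g_2)-1)$, and does the same with $K$. In effect it treats $g\mapsto K_{2g}-K_{2g-1}$ as additive in $g$. Nothing justifies that step, and your example $g_1=g_2=1$ refutes it: $K_2+K_2-K_1-K_1=2$, while $K_4-K_3=10-4=6$.

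The paper then reaches the second displayed identity by chaining through this false equality. You instead get it directly as the instance $n=2g-1$ of $!(n+1)=!n+n!$, without using $g_1,g_2$, and that is the valid route here.

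Your multiplicative replacement $(2g-1)!=(2g-1)\binom{2g-2}{2g_1-1}(2g_1-1)!\,(2g_2-1)!$ is correct; it gives $3\cdot 2\cdot 1\cdot 1=6$ at $g_1=g_2=1$. You are also right that the preceding theorem needs only the recursion. Likewise, the product formula in the proposition that follows just multiplies the two hypotheses, so it does not depend on the additive claim either.
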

\begin{proof}
    \begin{align*}
    (2g_1-1)! +  (2g_2-1)!&=  !(2g_1) - !(2g_1-1) +!(2g_2) - !(2g_2-1)  \\    
    &=!(2g_1)+!(2g_2) - !(2g_1-1) - !(2g_2-1)  \\  
    &= !2(g_1 + g_2) - !(2(g_1 + g_2)-1)\\
    &=(2(g_1 + g_2)-1)!
\end{align*}


\begin{align*}
    (2g_1-1)! +  (2g_2-1)! &= K_{2g_1}+ K_{2g_2} -  K_{2g_1-1}-K_{2g_2-1}\\
    &=  K_{2(g_1 + g_2)}-  K_{2(g_1 + g_2)-1}
\end{align*}
\end{proof}

\begin{proposition}
For $ b_g=\frac{(2-2^{2g})}{2^{2g}}\frac{\mathbf{B}_{2g}}{(2g)!}=\frac{\beta_{g}}{2^{2g}}$ and  $H_{2g-1}= \sum_{r=0}^{2g-1}\frac{1}{r}$ the following are satisfied:

\begin{align*}
    (2g-1)!\cdotp b_g&=  K_{2g}\cdotp b_g-  K_{2g-1}\cdotp b_g= !(2g)\cdotp b_g- !(2g-1)\cdotp b_g\\
(2g-1)! H_{2g-1} &=  K_{2g}H_{2g-1} -  K_{2g-1}H_{2g-1}= !(2g)H_{2g-1} - !(2g-1)H_{2g-1}= s(2g,2)\\ 
 (2g-1)!\cdotp b_g H_{2g-1} &=  K_{2g}\cdotp b_g H_{2g-1} -  K_{2g-1}\cdotp b_g H_{2g-1}\\
 &= !(2g)\cdotp b_g H_{2g-1} - !(2g-1)\cdotp b_g H_{2g-1}= s(2g,2)\cdotp b_g
\end{align*}
In particular, for the genus $g=g_1 + g_2$ with $g_1, g_2 >0$ if
\begin{align*}
        (2g_1-1)! \cdotp b_g&=  K_{2g_1}\cdotp b_g-  K_{2g_1-1}\cdotp b_g= !(2g_1)\cdotp b_g - !(2g_1-1)\cdotp b_g\\ \\
        (2g_2-1)!\cdotp b_g&=  K_{2g_2}\cdotp b_g-  K_{2g_2-1}\cdotp b_g= !(2g_2)\cdotp b_g - !(2g_2-1)\cdotp b_g\\ \\
        (2g_1-1)! H_{2g_1-1} &= K_{2g_1}H_{2g_1-1} -  K_{2g_1-1}H_{2g_1-1}\\
    &= !(2g_1)H_{2g_1-1} - !(2g_1-1)H_{2g_1-1}= s(2g_1,2)\\ \\
     (2g_1-1)!\cdotp b_{g_1} H_{2g_1-1} &=  K_{2g_1}\cdotp b_{g_1} H_{2g_1-1} -  K_{2g_1-1}\cdotp b_{g_1} H_{2g_1-1}\\
 &= !(2g_1)\cdotp b_{g_1} H_{2g_1-1} - !(2g_1-1)\cdotp b_{g_1} H_{2g_1-1}= s(2g_1,2)\cdotp b_{g_1} 
    \end{align*}
then
\begin{align}
  &(2g_1-1)!\cdotp b_{g_1} \times  (2g_2-1)! \cdotp b_{g_2}=(2g_1-1)! (2g_2-1)!\cdotp b_{g_1} b_{g_2}\\ \nonumber
  &= \bigg[ !(2g_1)!(2g_2)+!(2g_1-1)!(2g_2-1) - !(2g_1)!(2g_2-1)-!(2g_1-1)!(2g_2) \bigg]b_{g_1} b_{g_2}\\
  &=\bigg[ K_{2g_1}K_{2g_2}+K_{2g_1-1}K_{2g_2-1} - K_{2g_1}K_{2g_2-1}-K_{2g_2}K_{2g_1-1} \bigg]b_{g_1} b_{g_2}\\ \nonumber
  &=\big(K_{2g_1} - K_{2g_1-1}\big)\big(K_{2g_2} - K_{2g_2-1}\big)b_{g_1} b_{g_2}
\end{align}
    
\end{proposition}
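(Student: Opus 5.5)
The plan is to reduce every identity in the proposition to the single recursion $!(n+1)=\,!n+n!$ of Kurepa \cite{kurepa1971left}, in the form recorded in the preceding lemma: $(2g-1)!=K_{2g}-K_{2g-1}=\,!(2g)-\,!(2g-1)$ for every $g>0$.

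\textbf{Step 1 (the $b_g$ identity).} Multiplying the lemma's identity by the rational number $b_g=\frac{(2-2^{2g})}{2^{2g}}\frac{\mathbf{B}_{2g}}{(2g)!}$ gives
\[(2g-1)!\,b_g=K_{2g}\,b_g-K_{2g-1}\,b_g.\]
This uses only distributivity in $\mathbb{Q}$. The closed form of $b_g$ plays no role, so the line holds for any multiplier.

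\textbf{Step 2 (the harmonic identity).} Multiplying the same identity by $H_{2g-1}$ gives the first two equalities of the second line. Here I read $H_{2g-1}=\sum_{r=1}^{2g-1}\frac1r$, since the $r=0$ term in the statement must be a typo. It remains to show $(2g-1)!\,H_{2g-1}=s(2g,2)$, with $s$ the unsigned Stirling number of the first kind. I will prove the general claim $s(n,2)=(n-1)!\,H_{n-1}$ for $n\ge 2$ by induction, using the triangular recurrence $s(n+1,2)=n\,s(n,2)+s(n,1)$ and $s(n,1)=(n-1)!$.
\begin{itemize}
\item Base case: $s(2,2)=1=1!\,H_1$.
\item Inductive step: $n\,(n-1)!\,H_{n-1}+(n-1)!=n!\,(H_{n-1}+\tfrac1n)=n!\,H_n$.
\end{itemize}
Setting $n=2g$ gives the claim. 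Multiplying the resulting identity by $b_g$ then gives the third line.

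\textbf{Step 3 (the $g=g_1+g_2$ statements).} Each hypothesis is Step 1 or Step 2 specialised to $g_i$. In those hypotheses the multiplier should be read as $b_{g_i}$ rather than $b_g$. The product formula follows by multiplying the two equalities $(2g_i-1)!\,b_{g_i}=\bigl(K_{2g_i}-K_{2g_i-1}\bigr)b_{g_i}$ for $i=1,2$. Expanding bilinearly gives the four-term bracket
\[K_{2g_1}K_{2g_2}+K_{2g_1-1}K_{2g_2-1}-K_{2g_1}K_{2g_2-1}-K_{2g_2}K_{2g_1-1},\]
which refactors as $(K_{2g_1}-K_{2g_1-1})(K_{2g_2}-K_{2g_2-1})$. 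I will not invoke the earlier corollary claiming $(2g_1-1)!+(2g_2-1)!=(2(g_1+g_2)-1)!$. It fails already at $g_1=g_2=1$, where $1+1\neq 3!$, and the product identity does not need it.

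The main obstacle is not the algebra but fixing conventions. The sign convention for $s(n,k)$ must be the unsigned one, because the signed $s(2g,2)$ equals $-(2g-1)!\,H_{2g-1}$. The summation range of $H_{2g-1}$ and the subscripts on $b$ in the hypotheses must also be read as above. Once these are settled, everything is a direct consequence of the Kurepa recursion and the Stirling recurrence.
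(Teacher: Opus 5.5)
Your argument is correct and follows the route the paper implicitly intends: the paper writes no proof, relying only on the preceding lemma $(2g-1)!=K_{2g}-K_{2g-1}$ multiplied through by $b_g$ and $H_{2g-1}$ and then expanded bilinearly, whereas you also prove $(2g-1)!\,H_{2g-1}=s(2g,2)$, which the paper merely asserts, and you rightly avoid the false additive corollary. One correction to your closing remark: the signed Stirling number of the first kind is $(-1)^{n-k}$ times the unsigned one and $(-1)^{2g-2}=1$, so the signed $s(2g,2)$ also equals $+(2g-1)!\,H_{2g-1}$ (e.g.\ $s(4,2)=11$ in both conventions), and no sign-convention issue actually arises.
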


\begin{theorem}[Faber-Pandhariponde-Zagier and Gessel-Mikki identities]\cite{gessel1999miki, getzler1998virasor}
    Put $b_g=2-2^{2g}\frac{\mathbf{B}_{2g}}{(2g)!}$
\begin{itemize}
    \item \begin{align*}
        \sum_{n=1}^{g}\frac{2^{2n}}{2n}\frac{\mathbf{B}_{2n}}{(2n)!}b_{g-n}&=\sum_{g_1 + g_2=g}\frac{(K_{2g_1} - K_{2g_1-1})(K_{2g_2} - K_{2g_2-1})}{2}b_{g_1} b_{g_2}
        \\&- (K_{2g}\cdotp b_g H_{2g-1} -  K_{2g-1}\cdotp b_g H_{2g-1})
    \end{align*}

    \item $$\frac{n}{2}\sum_{i=2}^{n-2}\frac{\mathbf{B}_i (\frac{1}{2})}{i} \frac{\mathbf{B}_{n-i} (\frac{1}{2})}{n-i}- \sum_{i=0}^{n-2}\dbinom{n}{i} \mathbf{B}_i \bigg(\frac{1}{2}\bigg)\beta_{n-i}= H_{n-1}\mathbf{B}_n \bigg(\frac{1}{2}\bigg)$$
\end{itemize}
    where $\beta_{n}=(-1)^n \frac{\mathbf{B}_n}{n}$, $(2g-1)!b_g= 2^{2g}\beta_{2g}(\frac{1}{2})$, and $\mathbf{B}_n(\frac{1}{2})=(2^{1-n}-1)\mathbf{B}_n$ \cite{faber1998hodge}.
\end{theorem}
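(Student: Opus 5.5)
The plan is to reduce both items to their classical forms and then invoke the generating-function proofs of Gessel \cite{gessel1999miki} and Faber--Pandharipande \cite{faber1998hodge}. No new arithmetic input about $K_p$ modulo $p$ is used. The Kurepa factorials enter the first identity only through the telescoping relation of the genus lemma above, $K_{2g}-K_{2g-1}=(2g-1)!$. Accordingly, the first step rewrites every product $(K_{2g_1}-K_{2g_1-1})(K_{2g_2}-K_{2g_2-1})$ as $(2g_1-1)!\,(2g_2-1)!$. The correction term $K_{2g}b_gH_{2g-1}-K_{2g-1}b_gH_{2g-1}$ likewise becomes $(2g-1)!\,b_gH_{2g-1}$, i.e.\ $s(2g,2)\,b_g$ up to sign, by the preceding proposition. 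After this substitution the first bullet is exactly the Faber--Pandharipande identity for the $\lambda_g$ Hodge integrals, which Zagier verified and Gessel derived from Miki's identity.

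Before anything is cited, the normalisation has to be fixed. The theorem writes ``$b_g=2-2^{2g}\frac{\mathbf{B}_{2g}}{(2g)!}$'', while earlier the paper uses $b_g=\frac{2-2^{2g}}{2^{2g}}\frac{\mathbf{B}_{2g}}{(2g)!}$, equivalently $\sum_g b_gt^{2g}=\frac{t/2}{\sin(t/2)}$. I will adopt the generating-function definition throughout and read the stated formula as a typographical variant of it. I will also check the relation $(2g-1)!\,b_g=2^{2g}\beta_{2g}(\tfrac12)$ against $\mathbf{B}_n(\tfrac12)=(2^{1-n}-1)\mathbf{B}_n$. If any constant disagrees, I will correct it so that the $g=1,2$ cases match the values $b_1=-\frac1{24}$ and $b_2=\frac7{5760}$ listed earlier.

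The classical identity itself would be proved in Gessel's way.
\begin{enumerate}
\item Put $F(t)=\frac{t/2}{\sin(t/2)}$ and use $\frac{d}{dt}\log F=\frac1t-\frac12\cot\frac t2$, whose Taylor coefficients are the numbers $\frac{2^{2n}}{2n}\frac{\mathbf{B}_{2n}}{(2n)!}$ up to sign.
\item The left-hand side is then the $t^{2g}$ coefficient of $F\cdot t\frac{d}{dt}\log F$.
\item The convolution $\sum(2g_1-1)!(2g_2-1)!\,b_{g_1}b_{g_2}$ is a coefficient of $F^2$ read in a ``binomial-weighted'' form.
\item The $H_{2g-1}$ term comes from the derivative in $k$ of $F^{k+1}$ at $k=1$, or equivalently from $F\log F$, i.e.\ the $c_g$ series.
\end{enumerate}
Comparing coefficients then gives the identity.

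The second bullet is Gessel's Miki-type identity for $\mathbf{B}_n(\tfrac12)$. I will quote it from \cite{gessel1999miki} and sketch its standard proof: write $\beta_n$ and $\mathbf{B}_n(\tfrac12)/n$ as coefficients of $\log$-derivatives of $\frac{t}{e^t-1}$ and $\frac{t e^{t/2}}{e^t-1}$, multiply the two series, and extract the $t^n/n!$ coefficient. The $H_{n-1}$ factor arises from the partial-fraction identity $\frac{1}{i(n-i)}=\frac1n\bigl(\frac1i+\frac1{n-i}\bigr)$.

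The main obstacle is the bookkeeping of normalisations and index ranges, not the mathematics. This covers whether $g_1,g_2\ge1$ or $\ge0$ in the convolution, the factor $\frac12$, and the sign conventions in $b_g$ and $\beta_n$. I expect to settle these by testing small $g$ and $n$ numerically before committing to the final form.
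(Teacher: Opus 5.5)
Your plan has a genuine gap. After the substitution, the first bullet is \emph{not} the Faber--Pandharipande--Zagier identity, and as written it is false for every $g\ge 2$. The paper gives no proof of this theorem, only the citations, so the issue is with the statement and with your claim that it reduces to the known identity.

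\textbf{The first bullet is off by a factor $(2g-1)!$.} Take the normalisation the statement intends, $b_g=(2-2^{2g})\mathbf{B}_{2g}/(2g)!$, the $t^{2g}$-coefficient of $t/\sinh t$. This is the normalisation for which the stated relation $(2g-1)!\,b_g=2^{2g}\beta_{2g}(\tfrac12)$ holds, reading $\beta_{2g}(\tfrac12)=\mathbf{B}_{2g}(\tfrac12)/(2g)$, and it makes $g=1$ correct. With it, the true identity is
\[
\sum_{n=1}^{g}\frac{2^{2n}}{2n}\frac{\mathbf{B}_{2n}}{(2n)!}\,b_{g-n}
=\frac12\sum_{\substack{g_1+g_2=g\\ g_1,g_2\ge 1}}\frac{(2g_1-1)!\,(2g_2-1)!}{(2g-1)!}\,b_{g_1}b_{g_2}-H_{2g-1}\,b_g .
\]
So the stated right-hand side must be divided by $(2g-1)!=K_{2g}-K_{2g-1}$. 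For example, at $g=2$ we have $b_0=1$, $b_1=-\tfrac16$, $b_2=\tfrac7{360}$:
\begin{itemize}
\item the left side is $-\tfrac1{36}-\tfrac1{180}=-\tfrac1{30}$;
\item the stated right side is $\tfrac1{72}-6\cdot\tfrac{11}{6}\cdot\tfrac{7}{360}=-\tfrac15$;
\item the corrected right side is $-\tfrac1{30}$.
\end{itemize}
Both sides are nonzero for all $g\ge 1$ (the left side is a $c_g$-type coefficient), so the stated version fails for all $g\ge2$. A $g$-dependent factor like this cannot be absorbed by rescaling $b_g$. Your small-$g$ checks will therefore refute the first bullet, not just settle bookkeeping.

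What can be proved is the corrected identity, and it needs no separate argument. It is exactly the $n=2g$ case of the second bullet after substituting $\mathbf{B}_{2m}(\tfrac12)=2^{-2m}(2m)!\,b_m$ and multiplying by $2^{2g}/(2g)!$. The second bullet is correct as stated: it checks at $n=2,4,6$ and is trivial for odd $n$.

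\textbf{Your normalisation choice makes it worse.} With the coefficients of $\frac{t/2}{\sin(t/2)}$, even $g=1$ fails: the left side is $\tfrac16$ and the right side is $-b_1=-\tfrac1{24}$. The paper's earlier $b_1=-\tfrac1{24}$ also fails (right side $\tfrac1{24}$). These two sequences are $(-4)^{-g}$ and $4^{-g}$ times the intended one.

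\textbf{Your generating-function sketch is misassembled.}
\begin{itemize}
\item The numbers $\frac{2^{2n}}{2n}\frac{\mathbf{B}_{2n}}{(2n)!}$ are the coefficients of $\log\frac{\sinh t}{t}$ itself. So the left side is the $t^{2g}$-coefficient of $\frac{t}{\sinh t}\log\frac{\sinh t}{t}$, an $F\log F$-type series coming from $\partial_k F^{k+1}$ at $k=0$, not $k=1$. It is not the coefficient of $F\cdot t\frac{d}{dt}\log F$, which carries an extra factor $2n$.
\item More seriously, steps 3--4 hide the whole difficulty. The sum $\sum(2g_1-1)!(2g_2-1)!\,b_{g_1}b_{g_2}$ is an ordinary convolution of the factorially growing sequence $(2g-1)!\,b_g$. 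It is a coefficient of the square of the divergent series $\sum_g(2g-1)!\,b_gt^{2g}$, and it cannot be read off from $F^2$ by ordinary or binomial (exponential) convolution. Comparing coefficients of $F^2$ and $F\log F$ therefore cannot produce it.
\item Reconciling ordinary and binomial convolutions is exactly what Zagier's and Gessel's arguments do. Citing Gessel for the second bullet is fine, since that is all the paper does, but your partial-fraction sketch is not a proof of it.
\end{itemize}
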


Mimicing Vladimirov Theorem \ref{VK} we pose the following question about the $b_g$:

\begin{question}
For odd primes $p=3,5,\cdots$
\begin{enumerate}
    \item Is
\begin{align}
    \sum_{g=1}^{(p-3)/2}  b_g(K_{2g}-1)  \not \equiv  0\pmod{p}
\end{align}
Does this make the Kurepa conjecture $K_p\not\equiv 0 \pmod{p}$? 
\end{enumerate}
\end{question}

\section{Kaneko`s Heuristic about analogues of Euler constant}\label{sec 7}
Kaneko et al. \cite{kaneko2025finite} examined finite analogues of Euler's constant within the framework of finite multiple zeta values and proposed many possible values based on the notions of a "regularized value of $\zeta(1)$" and the series representations of Euler's constant by Mascheroni and Kluyver utilizing Gregory coefficients. In this section we shall extend the results to the 
Gertsch($\mathbb{G}_{p}$) , Lerch($L_p$) and Agoh-Guiga ($\mathbb{AG}_p$) $\pmod{p}$.
The Euler's constant $\gamma$,~\cite{euler1740progressionibus} 
\[ \gamma=\lim_{n\to\infty} \left(1+\frac12+\frac13+\cdots+\frac1n-\log n\right), \]
is an elegant mysterious mathematical constant, see ~\cite{lagarias2013euler} for more details.
Let $(G_n)_n$ be Gregory coefficients defined by
\begin{equation}\label{eq:gregory} \frac{x}{\log(1+x)} = 1+\sum_{n=1}^\infty G_n x^n.\end{equation}
All $G_n$'s are rational numbers alternating in sign ($(-1)^{n-1}G_n>0$ for $n \geq 1$), first several of them being
\[ G_1 = \frac{1}{2}, \quad G_2 = -\frac{1}{12}, \quad G_3 = \frac{1}{24}, \quad G_4 = -\frac{19}{720}, \quad \ldots \ .\]
Several authors, including F.T. Howard, Jordan, Feng Qi, Agoh, and Dilcher, etc. \cite{qi2014explicit, howard1993norlund, howard1994congruences, agoh2010recurrence} have studied the Gregory coefficients, also known as the Bernoulli number of the second kind. Next we shall follow approach by Kaneko et al. \cite{kaneko2017finite, kaneko2025finite, kanekoZagier2026};
Kaneko et al \cite{kaneko2025finite} gave a heuristic argument to ``justify'' the evidences that the element in $\A$ which corresponds to
$\zeta(k)\bmod \zeta(2)\ZZ_\R$ under the conjectural isomorphism above should be $Z_\A(k)$.
They showed that 
\begin{align}
 \zeta(k)\;\,\text{``}\!\underset{\text{Fermat}}{\equiv}\text{''}\;\zeta(k-(p-1))
\underset{\text{Euler}}{\=}-\frac{\mathbf{B}_{p-k}}{p-k}
\equiv \frac{\mathbf{B}_{p-k}}k\ \pmod p.
\end{align}
``Fermat'' means forcibly applying Fermat's little theorem to each summand of the Riemann zeta value. This can also be interpreted as a forbidden extrapolation of Kummer's congruence, which is valid for negative integers, to positive integers.

Kaneko gave a heuristic explanation to argue that this is an analogue of Euler's constant by recalling that $\gamma$ is a ``regularized value of $\zeta(1)$'', or, 
\[ \gamma=\lim_{s\to1} \left(\zeta(s)-\frac1{s-1}\right).\]
Applying the reasoning 
\[ \zeta(1)\,\text{``}\equiv\text{"}\,\zeta(1-(p-1))=-\frac{\mathbf{B}_{p-1}}{p-1}, \]
ending up in the value which is not $p$-integral. To remedy that, he added a ``polar term'' and have
\[ -\frac{\mathbf{B}_{p-1}}{p-1}+\frac1p\in\Z_{(p)}, \]
by the theorem of Clausen and von Staudt~\cite[Theorem~3.1]{arakawa2014bernoulli}. Define $\beta_n\ (n\ge1)$ by
\[ \beta_n\coloneqq\begin{cases}\mathbf{B}_n/n & \text{if }p-1\nmid n, \\ (\mathbf{B}_n+p^{-1}-1)/n & \text{if } p-1\mid n,\end{cases} \]
the value above is $-\beta_{p-1}$, and notably, it is known that various modulo $p$ congruences for $\beta_n$ hold uniformly, regardless of whether $n$ is divisible by $p-1$ or not (see.~\cite{johnson1975p}). Kaneko observed that the Kummer's congruence $\beta_{2n}\equiv \beta_{2n+p-1}$ ($p\geq 5$) holds for any $n$ and, by the (forbidden) extrapolation, the ``congruence''
\[ \text{``regularized value of }\zeta(1)\text{'' ``}\equiv\text{''} -\beta_{p-1}=-\frac{1}{p-1}\left(\mathbf{B}_{p-1}+\frac1p-1\right)= -\frac{W_p}{p-1}  \mod{p}\]
might look natural
\cite{crandall1997search}.

\subsection{$\gamma$, $W_p$ and Mascheroni $\pmod{p}$}
Kaneko et al. \cite{kaneko2025finite} introduced $\gaW$ as an analogue of $\gamma$ in $\A$.
Further, they introduce other analogues of Euler's constant in $\A$ and establish relations with $\gaW$ and other quantities in $\A$, such as an analogue $\log_\A(x)$ of logarithm.

\begin{definition}\label{Euler-W}
We define $\gaW$ in $\A$ by
\[ \gaW\coloneqq\left(W_p\bmod p\right)_p\in\A. \]
\end{definition}

\begin{definition}  We define another possible analogue $\gaM$ of $\gamma$ in $\A$ by
\[ \gaM\coloneqq\left(\sum_{n=1}^{p-2}\frac{|G_n|}{n}\bmod p\right)_p \in \A. \]
\end{definition}

\begin{definition} For $k\ge2$, define $G_\A(k)\in\A$ by
\[ G_\A(k)\coloneqq\left(G_{p-k}\bmod p\right)_p. \]
\end{definition}

 \begin{definition} Given the Fermat quotient
$q_p(x) = \frac{x^{p-1} - 1}{p} \in\Z_{(p)}$
 We define $\log_\A\colon\Q^{\times}\to \A$ by
\[ \log_\A(x) \coloneqq\left(q_p(x)\bmod p\right)_p\in\A. \]
\end{definition}

Now set 
\[ \ell_\mathcal{A}(x)\coloneqq x\log_\A(x)\in\A. \] 
As can be easily seen (and well known), $\log_\A$ satisfies the functional equation
\begin{equation}\label{eq:loglaw}
\log_\A(xy)=\log_\A(x)+\log_\A(y).
\end{equation}

\begin{Proper}[Kaneko-Matsusaka-Seki \cite{kaneko2025finite}]\label{Kane}
    For $\sum_{m=0}^{p-2} \frac{(-1)^m}{m+1}\equiv2q_p(2)  \pmod{p}$ and $\ker(\log_\A)=\{1,-1\}$ under the ABC-conjecture~\cite{silverman1988wieferich};
\begin{enumerate}
    \item

\begin{theorem}\label{thm:main1} We have
	\[
		\gaM = \gaW + \ell_\mathcal{A}(2) - 1
	\]
    equivalently,
		$$\sum_{n=1}^{p-2} \frac{|G_n|}{n} \equiv W_p + 2 q_p(2) - 1 \pmod{p}.$$
\end{theorem}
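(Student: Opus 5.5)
The plan is to realise the truncated Mascheroni sum as the integral over $[0,1]$ of a polynomial with $p$-integral coefficients, and then evaluate that integral modulo $p$. The input is the classical representation of the Gregory coefficients from \eqref{eq:gregory}:
\[
G_n=\int_0^1\binom{x}{n}\,dx ,\qquad |G_n|=(-1)^{n-1}G_n .
\]
With it,
\[
\sum_{n=1}^{p-2}\frac{|G_n|}{n}=\int_0^1 Q(x)\,dx,\qquad Q(x):=\sum_{n=1}^{p-2}\frac{(-1)^{n-1}}{n}\binom{x}{n}.
\]
For $n\le p-2$ the coefficients of $\binom{x}{n}/n$ have denominators dividing $n!\,n$, so they are $p$-integral. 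Since $\deg Q\le p-2$, every monomial integrates to $\int_0^1 x^k\,dx=1/(k+1)$ with $k+1\le p-1$. Hence the right-hand side is a well-defined element of $\Z_{(p)}$. This is exactly why the sum stops at $p-2$: the term $n=p-1$ would contribute $x^{p-1}$, whose integral is $1/p$.

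The second step identifies $Q$ modulo $p$ through its values. The Newton expansion $H_m=\sum_{n\ge1}(-1)^{n-1}\binom{m}{n}/n$ holds for every integer $m\ge0$. Therefore
\[
Q(a)=H_a-\frac{(-1)^{p-2}}{p-1}\binom{a}{p-1}
\]
for $a=0,1,\dots,p-1$. So $Q(a)\equiv H_a \pmod p$ for $a\le p-2$. At $a=p-1$ we get $Q(p-1)\equiv H_{p-1}-1\equiv -1$, using Wolstenholme (or just $H_{p-1}\equiv0$) and $\binom{p-1}{p-1}=1$. A polynomial of degree $\le p-2$ over $\mathbb{F}_p$ is determined by its values at the points of $\mathbb{F}_p$. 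Thus $Q \bmod p$ is pinned down by the residues $H_a \bmod p$ together with this correction at $a=p-1$.

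The third step is the main obstacle. I need a linear functional on polynomials of degree $\le p-2$ over $\mathbb{F}_p$ that sends $x^k\mapsto 1/(k+1)$, written in terms of point values. That is, I want weights $w_a$ with
\[
\frac1{k+1}\equiv\sum_{a=0}^{p-1}w_a\,a^k \pmod p\qquad (0\le k\le p-2).
\]
I would first try to obtain them from Lagrange interpolation combined with the power-sum identity $\sum_{a=1}^{p-1}a^m\equiv -[\,p-1\mid m\,]$. The idea is to expand each basis polynomial $\prod_{b\ne a}(x-b)\equiv -\sum_{j=0}^{p-1}a^{\,p-1-j}x^{j}$ and integrate termwise. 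The awkward top term $j=p-1$ is precisely what produces the $1/p$ pieces, and these pieces must be tracked to order $p^2$ rather than $p$. I expect this is where $(p-1)!$ enters, through $\prod_{b\ne0}(-b)$, and with it the Wilson quotient $W_p$.

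Once a workable formula $\int_0^1 Q\equiv\sum_a w_a Q(a)$ is in hand, substitute $Q(a)\equiv H_a$ and reduce the resulting harmonic sums. For this I will use the identities quoted before the statement. The first is Property \ref{Kane}, $\sum_{m=0}^{p-2}(-1)^m/(m+1)\equiv 2q_p(2)$, which is where $\ell_\A(2)=2\log_\A(2)$ comes from. The second is the Lerch formula $\sum_{a}q_p(a)\equiv W_p$ of Definition \ref{sondow}. The residual constant $-1$ should come from the correction value $Q(p-1)\equiv-1$. Reading the congruence for each $p$ coordinatewise in $\A$ then gives $\gaM=\gaW+\ell_\A(2)-1$.

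If the point-value route becomes unmanageable, the fallback is more direct. I would expand $n!\,G_n=\sum_k s(n,k)/(k+1)$ with Stirling numbers of the first kind and reduce the double sum using $s(p-1,k)$ modulo $p$ together with Bernoulli-number congruences such as Glaisher's $W_p\equiv\mathbf{B}_{p-1}+\frac1p-1$.
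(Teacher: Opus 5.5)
Your first two steps are sound: $G_n=\int_0^1\binom{x}{n}\,dx$ is correct, $\int_0^1 Q$ is $p$-integral, and the values $Q(a)=H_a$ for $0\le a\le p-2$ and $Q(p-1)\equiv -1$ are right. The gap is the third step, which you yourself call the main obstacle. It is the entire content of the theorem, and it is not carried out. You never determine the weights $w_a$ and never evaluate $\sum_a w_aQ(a)$. The closing paragraph only lists identities you intend to use, together with guesses about where $W_p$, $2q_p(2)$ and $-1$ will come from.

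The one concrete mechanism you propose for $W_p$ cannot work. Since $\deg Q\le p-2$, the functional $x^k\mapsto 1/(k+1)$ is already defined on polynomials of degree $\le p-2$ over $\mathbb{F}_p$. So nothing has to be tracked modulo $p^2$, and modulo $p$ the product $\prod_{b\ne 0}(-b)$ is just $-1$. In this route $W_p$ can only enter through Lerch's formula.

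The fallback is too vague to fill the hole. The double sum involves $s(n,k)$ for every $n\le p-2$, while $s(p-1,k)\equiv(-1)^k$ only controls $G_{p-1}$. It does give $-G_{p-1}-\frac1p\equiv W_p+2q_p(2)-1$, and that is where your intuitions about $(p-1)!$ modulo $p^2$ and the alternating harmonic sum genuinely belong. You would then still need $\sum_{n=1}^{p-2}|G_n|/n\equiv -G_{p-1}-\frac1p$. Note also that the paper gives no argument for this statement, only a reference to Kaneko--Matsusaka--Seki, so your proof has to stand on its own.

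The point-value route does close, but it needs an ingredient absent from your plan. Power sums give the weights explicitly: $w_a=\sum_{i=1}^{p-1}a^i/i$ for $1\le a\le p-1$, since on $x^k$ only $i=p-1-k$ survives. The congruence $\sum_{i=1}^{p-1}x^i/i\equiv\bigl(1-x^p-(1-x)^p\bigr)/p$ turns this into $w_a\equiv(a-1)q_p(a-1)-a\,q_p(a)$, with the first term read as $0$ for $a=1$. Abel summation and $q_p(p-1)\equiv 1$ then give, for every $f$ of degree at most $p-2$,
\[
\int_0^1 f(x)\,dx\equiv\sum_{b=1}^{p-2}b\,q_p(b)\bigl(f(b+1)-f(b)\bigr)+f(p-1)\pmod p.
\]
For $f=Q$ the differences are $\frac{1}{b+1}$ for $b\le p-3$ but $-2$ at $b=p-2$. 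Using $q_p(p-2)\equiv q_p(2)+\frac12$, this yields $\int_0^1Q\equiv\sum_{b=1}^{p-2}\frac{b\,q_p(b)}{b+1}+2q_p(2)$.

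After writing $\frac{b}{b+1}=1-\frac{1}{b+1}$ and applying Lerch's formula, what remains is the congruence $\sum_{b=1}^{p-2}\frac{q_p(b)}{b+1}\equiv 0\pmod p$. It appears nowhere in your proposal. It follows from the same formula applied to $f(x)=-x^{p-2}$: its differences are $\frac{1}{b(b+1)}$, $f(p-1)\equiv 1$, and $\int_0^1 f=-\frac{1}{p-1}\equiv 1$. Only then do you get $\int_0^1 Q\equiv (W_p-1)+2q_p(2)$, which is the theorem.
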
 

\item 
\begin{theorem} For $k\ge2$, we have
\[ G_\A(k)= (-1)^k\sum_{j=1}^k(-1)^{j-1}\binom{k}{j}\la(j+1). \]
\end{theorem}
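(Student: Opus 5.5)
The plan is to start from the integral representation of the Gregory coefficients,
$$G_n=\int_0^1\binom{x}{n}\,dx=\frac{1}{n!}\int_0^1 (x)_n\,dx,\qquad (x)_n=x(x-1)\cdots(x-n+1),$$
which follows from \eqref{eq:gregory} by writing $\frac{x}{\log(1+x)}=\int_0^1(1+x)^t\,dt$. I would then specialise to $n=p-k$ with $p>k+1$. This is harmless in $\A$, since only finitely many primes are excluded.

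\textbf{Factorial prefactor.} By Wilson's theorem,
$$(p-1)!=(p-k)!\,(p-k+1)\cdots(p-1)\equiv (p-k)!\,(-1)^{k-1}(k-1)!\pmod p .$$
Hence $1/(p-k)!\equiv (-1)^{k}(k-1)!\pmod p$, and the problem reduces to computing $\int_0^1(x)_{p-k}\,dx \bmod p$.

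\textbf{Reduction of the falling factorial.} I would use the exact identity
$$(x)_{p-k}=\frac{(x)_p}{(x-p+1)\cdots(x-p+k)}$$
together with the Lagrange congruence $(x)_p\equiv x^p-x \pmod{p\Z[x]}$ recalled in Section~\ref{sec 3}. The aim is to replace $(x)_{p-k}$, modulo $p\Z_{(p)}[x]$, by the polynomial quotient
$$\frac{x^p-x}{(x+1)(x+2)\cdots(x+k)} .$$
Partial fractions then give
$$\frac{1}{(x+1)\cdots(x+k)}=\sum_{j=1}^{k}\frac{(-1)^{j-1}}{(j-1)!\,(k-j)!}\,\frac{1}{x+j}.$$
So the task reduces to integrating $\frac{x^p-x}{x+j}$ over $[0,1]$ and reading off the result modulo $p$.

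\textbf{Integrating $\frac{x^p-x}{x+j}$.} I would split the fraction as
$$\frac{x^p-x}{x+j}=\frac{x^p-(-j)^p}{x+j}-1+\frac{(-j)^p+j}{x+j}.$$
\begin{enumerate}
\item The first piece is a polynomial, and its integral is $\sum_{i=1}^{p}(-j)^{p-i}/i$.
\item The last piece has numerator divisible by $p$ for odd $p$.
\item The integral $\sum_{i=1}^{p}(-j)^{p-i}/i$ should be rewritten through
$$\frac{(j+1)^p-j^p-1}{p}=\sum_{i=1}^{p-1}\frac{1}{p}\binom{p}{i}j^{p-i},\qquad \frac1p\binom pi\equiv\frac{(-1)^{i-1}}{i}\pmod p .$$
Combining this with $a^p=a+pa\,q_p(a)$ expresses everything through $(j+1)q_p(j+1)$ and $j\,q_p(j)$, that is, through $\la(j+1)$ and $\la(j)$, plus polar terms of the form $c/p$.
\end{enumerate}

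\textbf{Main obstacle.} I expect the main difficulty to be the bookkeeping of the non-$p$-integral pieces. These are the $1/p$ term of the polynomial integral, the contribution of $\frac{(-j)^p+j}{x+j}$ (which integrates to a logarithm), and the error in replacing $(x)_p$ by $x^p-x$, whose quotient by $(x+1)\cdots(x+k)$ is not a polynomial. My first attempt would be to work with exact rational identities rather than congruences, and show that after weighting by $\frac{(-1)^{j-1}}{(j-1)!(k-j)!}$ and summing over $j$ these pieces cancel. The cancellation should come from $\sum_j(-1)^{j-1}\binom{k-1}{j-1}j^r=0$ for $r<k-1$. Whatever survives should be $p$-integral and congruent to a combination of $\la(j+1)$.

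Finally, I would multiply by $(-1)^k(k-1)!$, which turns $\frac{(k-1)!}{(j-1)!(k-j)!}$ into $\binom{k-1}{j-1}$. Reindexing with $\binom{k-1}{j-1}+\binom{k-1}{j}=\binom kj$ should merge the $\la(j)$ and $\la(j+1)$ terms into $(-1)^k\sum_{j=1}^k(-1)^{j-1}\binom kj\la(j+1)$, using $\la(1)=0$. As a sanity check I would test $k=2$, where the claim reads $G_{p-2}\equiv 2\la(2)-\la(3)$, against the values $G_1=\tfrac12$ and $G_3=\tfrac1{24}$ for $p=3$ and $p=5$.
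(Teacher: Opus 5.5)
Your skeleton is sound, and the arithmetic at the end is right. Componentwise, $\sum_{i=1}^{p-1}(-j)^{p-i}/i\equiv\frac{(j+1)^p-j^p-1}{p}=\la(j+1)-\la(j)$, and weighting by $(-1)^k(k-1)!\,c_j$ with $c_j=\frac{(-1)^{j-1}}{(j-1)!(k-j)!}$, followed by the Pascal reindexing, gives exactly the stated formula. It also checks against $G_3$ at $p=5$ and against $G_4,G_5$ at $p=7$. The paper gives no argument of its own here and only refers to Kaneko--Matsusaka--Seki, so a repaired version of your route would be a self-contained proof.

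\textbf{The gap.} It is in the reduction step. The quotient $\frac{x^p-x}{(x+1)\cdots(x+k)}$ is a polynomial only in $\mathbb{F}_p[x]$. Over $\Q$ it is a genuine rational function, and reduction mod $p$ is compatible with $\int_0^1$ for polynomials in $\Z_{(p)}[x]$ of degree at most $p-2$, not for rational functions. Your plan to integrate exactly and let the bad pieces cancel after weighting works for the $1/p$ terms, which sum to $\frac1p\sum_j c_j=0$. It fails for the logarithmic pieces. These contribute $\sum_j c_j\bigl((-j)^p+j\bigr)\log\frac{j+1}{j}$, and the moment identity $\sum_j(-1)^{j-1}\binom{k-1}{j-1}j^r=0$ cannot kill them, because the factor $\log\frac{j+1}{j}$ varies with $j$. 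Already for $k=2$ the sum is $(2^p-2)\log\frac32\neq0$, so the real number you would be reducing modulo $p$ is not even rational. In a fully exact computation these logarithms are cancelled only by the error from $(x)_p\mapsto x^p-x$ and $x-p+j\mapsto x+j$, and your plan never controls that error.

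\textbf{The fix.} Discard the bad pieces at the polynomial level, before integrating. Set $P(x)=\sum_{j=1}^k c_j\,\frac{x^p-(-j)^p}{x+j}\in\Z_{(p)}[x]$.
\begin{enumerate}
\item Use $(-j)^p\equiv -j\pmod p$, write $j=(x+j)-x$, and use the exact identities $\sum_j c_j\prod_{i\neq j}(x+i)=1$ and $\sum_j c_j=0$. This gives $\prod_{i=1}^k(x+i)\,P(x)\equiv x^p-x\equiv (x)_{p-k}\prod_{i=1}^k(x-p+i)\pmod{p\Z_{(p)}[x]}$.
\item Cancelling the monic factor in $\mathbb{F}_p[x]$ gives $P\equiv (x)_{p-k}$ there.
\item The coefficient of $x^m$ in $P$ is $\sum_j c_j(-j)^{p-1-m}$, which vanishes for $m\ge p-k+1$ by your moment identity. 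Hence $\deg P\le p-k\le p-2$, and therefore $\int_0^1 (x)_{p-k}\,dx\equiv\int_0^1 P(x)\,dx\pmod p$.
\item Termwise integration gives $\sum_j c_j\sum_{i=1}^{p}(-j)^{p-i}/i$, whose $i=p$ term is $\frac1p\sum_j c_j=0$ exactly.
\end{enumerate}
No logarithm ever appears. Your step 3 and the final reindexing then complete the proof for all sufficiently large $p$, which is all that matters in $\A$.
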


\item 
\begin{theorem}
For $k\geq 2$, $ G_\A(k)\neq0$ under the ABC-conjecture.
\end{theorem}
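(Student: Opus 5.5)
The plan is to collapse the finite sum in the formula for $G_\A(k)$ into a single $\log_\A$ of one rational number, and then apply the property $\ker(\log_\A)=\{1,-1\}$ recorded (under the ABC-conjecture) in Property \ref{Kane}.

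First, by the preceding theorem,
\[ G_\A(k)= (-1)^k\sum_{j=1}^k(-1)^{j-1}\binom{k}{j}\la(j+1)=\sum_{j=1}^{k} c_j\,\log_\A(j+1),\qquad c_j\coloneqq(-1)^{k+j-1}(j+1)\binom{k}{j}\in\Z .\]
This uses $\la(x)=x\log_\A(x)$ and the fact that each $c_j$ is an ordinary integer, so it acts diagonally on $\A$. Next, the functional equation \eqref{eq:loglaw} gives $\log_\A(x^n)=n\log_\A(x)$ for all $n\in\Z$. Hence
\[ G_\A(k)=\log_\A(N_k),\qquad N_k\coloneqq\prod_{j=1}^{k}(j+1)^{c_j}\in\Q^{\times}. \]
So it suffices to show $N_k\neq\pm1$. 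The kernel statement in Property \ref{Kane} then yields $G_\A(k)\neq0$.

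To see $N_k\neq\pm1$, I will exhibit one prime at which $N_k$ has nonzero valuation. By Bertrand's postulate, since $k+1\ge3$, there is a prime $P$ with $(k+1)/2<P\le k+1$. Among the bases $2,3,\dots,k+1$, only $j+1=P$ is divisible by $P$, because $2P>k+1$, and $P^2$ divides none of them. Therefore
\[ v_P(N_k)=c_{P-1}=\pm P\binom{k}{P-1}\neq0, \]
because $1\le P-1\le k$. Hence $N_k\neq\pm1$, which is what we need.

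The only delicate step is the input $\ker(\log_\A)=\{\pm1\}$ on all of $\Q^\times$, which is where ABC enters. Silverman's argument gives infinitely many non-Wieferich primes to each integer base $a\neq0,\pm1$. Here $N_k$ is in general a genuine fraction $a/b$, with $a$ and $b$ built from the positive and negative exponents. I will simply invoke the kernel statement as given in Property \ref{Kane}. If an integer-only version is all that is available, the fallback I would try first is to note that $\log_\A(a/b)=0$ forces $q_p(a)\equiv q_p(b)$ for almost all $p$, and then run the ABC argument on $a^{p-1}-b^{p-1}$ in the same way as in Silverman's proof.
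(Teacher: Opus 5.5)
Your proof is correct and complete. The paper gives no argument of its own for this statement: it is quoted inside Property~\ref{Kane}, and the only justification is a pointer to Kaneko--Matsusaka--Seki~\cite{kaneko2025finite}. So your write-up supplies the missing proof rather than taking a route different from one in the paper.

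Each step checks out:
\begin{itemize}
\item The coefficients $c_j=(-1)^{k+j-1}(j+1)\binom{k}{j}$ are integers, so the preceding formula together with \eqref{eq:loglaw} gives $G_\A(k)=\log_\A(N_k)$.
\item A prime $P$ with $(k+1)/2<P\le k+1$ divides exactly one of the bases $2,\dots,k+1$, namely $P$ itself, and only to the first power. Hence $v_P(N_k)=\pm P\binom{k}{P-1}\neq0$. For instance, $N_2=2^4/3^3$ and $N_3=3^9/2^{14}$.
\item The kernel statement $\ker(\log_\A)=\{\pm1\}$ then finishes the argument.
\end{itemize}

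The step you flag as delicate is not actually an issue. Property~\ref{Kane} records the kernel statement for $\log_\A$ on all of $\Q^\times$. Moreover, Silverman's theorem~\cite{silverman1988wieferich} is itself formulated for rational bases $\alpha\in\Q^\times$ with $\alpha\neq\pm1$. So no separate fallback for fractions $a/b$ is needed.

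What your version adds over the bare citation is transparency. It isolates the only deep inputs: the closed formula for $G_\A(k)$ and the ABC-conditional kernel of $\log_\A$. The remaining claim, $N_k\neq\pm1$, reduces to an elementary Bertrand-plus-valuation check.
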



\end{enumerate}

\end{Proper}

\begin{proof}  
For details, see Kaneko et al. \cite{kaneko2025finite}
\end{proof}

\subsection{Gertsch($\mathbb{G}_{p}$) , Lerch($L_p$) and Agoh-Guiga ($\mathbb{AG}_p$) $\pmod{p}$}

We use similar heuristic as Kaneko et al. \cite{kaneko2025finite} and define an analogue $\gaG$ of Euler's constant in $\A$ as follows.
It is well known that,
\begin{align*}
 K_p=\sum_{ n=0}^{p-1} n!&= 0! + 1! + 2! + \cdots + (p-2)! + (p-1)!\\
        K_p&= \sum_{ n=0}^{p-2} n! + (p-1)!=\mathbf{Bell}_{p-1} + (p-1)! \\
         K_p&\equiv \mathbf{Bell}_{p-1} -1 \pmod{p}.
\end{align*}

Let 
$$\mathbb{G}_{p} = \frac{(K_p - \mathbf{Bell}_{p-1}) + 1}{p}= \frac{(\mathbf{Der}_{p-1} - \mathbf{Bell}_{p-1}) + 1}{p}\in\Z $$
be the Gertsch quotient [see definition \ref{quotient}].

\begin{definition}\label{gq}  We define $\gaG$ in $\A$ by
\[ \gaG\coloneqq\left(\mathbb{G}_p\bmod p\right)_p\in\A. \]
\end{definition}

\begin{lemma}\label{Nzero}
    $\gaG\neq 0$.
\end{lemma}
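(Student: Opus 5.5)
Since $\A$ is the direct product $\prod_p \Z/p\Z$ modulo the direct sum, an element $(a_p)_p$ vanishes in $\A$ exactly when $a_p\equiv 0 \pmod p$ for all but finitely many primes $p$. The plan is therefore to show that the set of primes $p$ with $\mathbb{G}_p\not\equiv 0\pmod p$ is infinite; by Definition \ref{gq} this gives $\gaG\neq 0$. This is weaker than Theorem \ref{NP}, which asks for no exceptions at all. It only asks that the Gertsch quotient does not vanish modulo $p$ for a cofinite set of primes.

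\textbf{Step 1: split $\mathbb{G}_p$.} Write $K_p=K_{p-1}+(p-1)!$, where $K_{p-1}=\sum_{n=0}^{p-2}n!$. By the computation preceding Property \ref{Gt}, $K_{p-1}\equiv \mathbf{Bell}_{p-1}\pmod p$, so $D_p\coloneqq (K_{p-1}-\mathbf{Bell}_{p-1})/p$ is an integer. Definition \ref{quotient} then gives the exact identity
\[
\mathbb{G}_p \= W_p + D_p ,
\]
and hence $\gaG=\gaW+\gamma_D$ in $\A$, where $\gamma_D\coloneqq(D_p\bmod p)_p$. The problem becomes showing $\gamma_D\neq-\gaW$.

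\textbf{Step 2: a mod-$p$ formula for $D_p$.} I would lift Gertsch's congruence $K_{p-1}\equiv\mathbf{Bell}_{p-1}$ to modulus $p^2$. On one side, expand $\mathbf{Bell}_{p-1}=\sum_k S(p-1,k)$ and use the Touchard congruence of Definition \ref{Tch} modulo $p^2\Z[x]$. On the other side, use Mijajlović's rewriting $n!\equiv(-1)^{n+1}/(p-1-n)!$ corrected to modulo $p^2$; this correction introduces $W_p$ and harmonic-type sums $\sum 1/k$ with $H_{p-1}\equiv 0\pmod{p^2}$. The target is an expression
\[
D_p\equiv -W_p + R_p \pmod p ,
\]
where $R_p$ is an explicit sum of Fermat quotients and Bernoulli numbers. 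The ingredients would be Theorem \ref{Ag}, the Glaisher--Beeger congruence of Property \ref{prop all}, and $\log_\A$. This turns $\gaG$ into the $\A$-element $(R_p \bmod p)_p$, which is built from objects like $\gaW$ and $\log_\A(x)$ whose behaviour in $\A$ is studied in Property \ref{Kane}.

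\textbf{Step 3: non-vanishing, the expected obstacle.} Showing that $(R_p\bmod p)_p\neq0$ is where I expect the real difficulty. If $R_p$ reduces to a combination of $\log_\A(x)$'s, I would invoke the ABC-conditional result $\ker(\log_\A)=\{1,-1\}$ from Property \ref{Kane}, following the approach for $G_\A(k)\neq0$; this yields $\gaG\neq0$ conditionally on ABC. If instead $R_p$ retains a genuine $\gaW$-component, the claim becomes at least as hard as $\gaW\neq 0$, which is open. In that case I would settle for a conditional statement, supplemented by the numerical evidence behind Theorem \ref{NP} that no prime in the computed range has $\mathbb{G}_p\equiv0\pmod p$. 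That evidence alone does not prove the lemma, since vanishing in $\A$ concerns only all sufficiently large $p$.
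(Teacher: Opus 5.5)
\textbf{There is a genuine gap: your proposal does not prove the lemma, and neither does the paper.}

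Your Step~1 is correct. $\mathbb{G}_p=W_p+D_p$ is an exact identity, and $D_p$ is not negligible: $D_5=-1$ and $D_{11}\equiv 7\pmod{11}$, so the congruence $\mathbb{G}_p\equiv W_p$ that the paper uses elsewhere is false in general. Your reading of $\gaG\neq0$ is also right: it means $\mathbb{G}_p\not\equiv0\pmod p$ for infinitely many $p$, so no finite search can settle it. That remark applies directly to the paper's own proof, which consists only of the observation that no Gertsch primes are known.

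Steps~2--3, however, are a programme rather than an argument. Step~2 announces a target $R_p$ built from Fermat quotients and Bernoulli numbers without deriving it, and that is not what the lift produces. Write $n!=(p-1)!/\prod_{j=1}^{p-1-n}(p-j)$, expand modulo $p^2$, and use the explicit formula for $S(p-1,k)$. With $E_r=\sum_{i=0}^{r}(-1)^i/i!$ and $H_m=\sum_{k=1}^m 1/k$ this gives
\[
K_{p-1}\equiv (pW_p-1)\Bigl(E_{p-1}-1+p\sum_{m=1}^{p-1}\frac{(-1)^mH_m}{m!}\Bigr)\pmod{p^2},
\]
together with the exact identity
\[
\mathbf{Bell}_{p-1}=1-E_{p-1}+p\sum_{j=1}^{p-1}\frac{q_p(j)\,E_{p-1-j}}{j!}.
\]
Since $E_{p-1}\equiv -K_p\pmod p$, it follows that
\[
\mathbb{G}_p\equiv -K_pW_p-\sum_{m=1}^{p-1}\frac{(-1)^mH_m}{m!}-\sum_{j=1}^{p-1}\frac{q_p(j)\,E_{p-1-j}}{j!}\pmod p .
\]
This gives $4,5,8$ for $p=5,7,11$, matching $\mathbb{G}_p \bmod p$.

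So $\gaG$ contains the product $-\gaKp\,\gaW$ together with two exponential-type sums weighted by $1/m!$. None of these is known to reduce to Fermat quotients or Bernoulli numbers. There is therefore no element $\log_\A(x)$ to which $\ker(\log_\A)=\{\pm1\}$ could be applied, and the ABC-conditional route of Step~3 is unavailable along this path.

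The missing ingredient is any mechanism that forces $\mathbb{G}_p\not\equiv0\pmod p$ for infinitely many primes. Nothing in your proposal supplies one, and nothing in the paper does either. The problem is of the same type as $\gaW\neq0$, which the paper itself, citing Kaneko et al., records as unknown. Your fallback of a conditional statement supplemented by numerics would not establish the lemma as stated, and, as you note yourself, the numerics are irrelevant to vanishing in $\A$. You have correctly diagnosed why the paper's argument is insufficient, but the lemma remains unproved.
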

\begin{proof}
    There are no known Gertsch primes, that is, the Gertsch quotient, $\mathbb{G}_p \not\equiv 0 \pmod{p}$, primes such that $\mathbb{G}_p$ is not divisible by $p$.
\end{proof}

\begin{proposition}\label{lerch}[Lerch]\cite{lerch1905theorie}
For an odd prime p, we have
    $$ W_p=\sum_{a=1}^{p-1} q_p(a) \pmod{p}.$$
\end{proposition}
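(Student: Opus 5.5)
The plan is to prove Lerch's formula, $W_p\equiv\sum_{a=1}^{p-1}q_p(a)\pmod p$, by taking the $p$-adic logarithm, i.e. the Fermat quotient, of the product $(p-1)!=\prod_{a=1}^{p-1}a$. The key tool is the logarithmic law (\ref{eq:loglaw}) at the level of integers modulo $p$. Concretely, I will first establish that for integers $x,y$ prime to $p$,
\[ q_p(xy)\equiv q_p(x)+q_p(y)\pmod p. \]
This follows by writing $x^{p-1}=1+pq_p(x)$ and $y^{p-1}=1+pq_p(y)$ and multiplying:
\[ (xy)^{p-1}=1+p\bigl(q_p(x)+q_p(y)\bigr)+p^2q_p(x)q_p(y), \]
so that $q_p(xy)=q_p(x)+q_p(y)+p\,q_p(x)q_p(y)$. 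Iterating over $a=1,\dots,p-1$ gives
\[ \sum_{a=1}^{p-1}q_p(a)\equiv q_p\bigl((p-1)!\bigr)\pmod p. \]

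Next I compute $q_p((p-1)!)$ directly from Wilson's theorem in quotient form. Write $(p-1)!=-1+pW_p$ with $W_p\in\Z$. Since $p-1$ is even,
\[ (p-1)!^{\,p-1}=(1-pW_p)^{p-1}. \]
Expanding binomially, every term beyond the linear one carries $p^2$, so
\[ (1-pW_p)^{p-1}\equiv 1-(p-1)pW_p\equiv 1+pW_p\pmod{p^2}. \]
Hence $q_p((p-1)!)\equiv W_p\pmod p$, and combining this with the first step yields the proposition.

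The only delicate point is bookkeeping: I must check that the cross terms $p\,q_p(x)q_p(y)$ vanish modulo $p$ at each stage of the iteration, and that the sign $(-1)^{p-1}=1$ is used correctly. The second of these is where oddness of $p$ enters. Neither step is a real obstacle; the main care is in keeping all congruences modulo $p^2$ before dividing by $p$.
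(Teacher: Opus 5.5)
Your proof is correct and complete. The paper gives no proof of this proposition; it cites Lerch and later uses the adelic restatement \eqref{L-wil}, $\gaW=\sum_{x=1}^{p-1}\log_\A(x)$, quoted from Kaneko et al.

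The two steps of your argument both check out:
\begin{itemize}
\item The identity $q_p(xy)=q_p(x)+q_p(y)+p\,q_p(x)q_p(y)$ holds with all quotients integral, because every argument is prime to $p$. It therefore reduces $\sum_{a=1}^{p-1}q_p(a)$ to $q_p\bigl((p-1)!\bigr)$ modulo $p$. The cross-term bookkeeping you flag is automatic.
\item Writing $(p-1)!=pW_p-1$ and expanding $(1-pW_p)^{p-1}$ modulo $p^2$ gives $q_p\bigl((p-1)!\bigr)\equiv W_p\pmod p$.
\end{itemize}

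Your argument is the standard proof behind both the citation and the adelic restatement. Your first step is the prime-by-prime form of the functional equation \eqref{eq:loglaw}. Your second step identifies the Fermat quotient of the Wilson product with the Wilson quotient. Note also that \eqref{L-wil} can only be read componentwise, because its range of summation depends on $p$; its content is exactly the congruence you prove.

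What your version adds over the citation is a self-contained argument that shows where the hypothesis is used. Oddness of $p$ enters only through $(-1)^{p-1}=1$, and it is genuinely needed: for $p=2$ one has $W_2=1$ but $q_2(1)=0$.
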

Now we see from \cite{kaneko2025finite}
\begin{equation}\label{L-wil}
    \gaW=\sum_{x=1}^{p-1} \log_\A(x) 
\end{equation}
from the definition of the Lerch quotient \cite{sondow2014lerch} 
\begin{align*}
    L_p&=\dfrac{\sum_{a=1}^{p-1} q_p(a)- W_p}{p}
\end{align*}
    
\begin{definition}  We define $\gaL$ in $\A$ by
\[ \gaL\coloneqq\left(L_p\bmod p\right)_p\in\A. \]
\end{definition}

Also, from the Agoh-Guiga conjecture we define analogues $\gaAG$ and $\gaQ$ of Euler's constant in $\A$ as follows. From lemma \ref{Alem}

   $$ \mathbb{AG}_p=\frac{p\mathbf{B}_{p-1}+1}{p}$$
and $$ \mathrm{Q}_p(m):= \mathbb{AG}_p + q_p(m) \in \Z_p$$
then 

\begin{definition}\label{QAG}  We define $\gaAG$ and $\gaQ$ in $\A$ by
\[ \gaAG\coloneqq\left(\mathbb{AG}_p\bmod p\right)_p\in\A. \]
and 
\[ \gaQ(m)\coloneqq\left(\mathrm{Q}_p(m)\bmod p\right)_p=\gaAG + \log_\A(m)  \]
where $$ \left(\mathrm{Q}_p(m)\bmod p\right)_p\in\A.$$
\end{definition}

\begin{theorem}\label{KA}
    For an odd prime $p$ and an integer $m\geq 1$ with $p\nmid m$, we have 
  \begin{enumerate}
      \item $$\sum_{k=1}^{p-2}  \frac{1}{m^k}\frac{\mathbf{B}_k}{k!}\equiv W_p +q_p(m) \pmod{p}=\gaW + \log_\A(m)=\gaQ(m)-1 $$
      
\item $$\sum_{k=1}^{p-2}  \frac{(-1)^k}{m^k}\frac{\mathbf{B}_k}{k!}\equiv W_p +q_p(m)+ \frac{1}{m} \pmod{p}=\gaW + \log_\A(m)+ \frac{1}{m} $$
where $\log_\A(p-m) \coloneqq\left(q_p(p-m)\bmod p\right)_p\in\A$ and $q_p(p-m)\equiv q_p(m)+ \frac{1}{m} \pmod{p}$ and $\gaQ(m)\coloneqq\left(\mathrm{Q}_p(m)\bmod p\right)_p=\gaAG + \log_\A(m). $

  \item specifically if $m=2$
  $$\sum_{k=1}^{p-2}  \frac{1}{2^k}\frac{\mathbf{B}_k}{k!}\equiv \mathrm{Q}_p(2)-1\equiv W_p +q_p(2) \pmod{p}=\gaW + \log_\A(2) $$

\item $$\sum_{k=1}^{p-2}  \frac{(-1)^k}{2^k}\frac{\mathbf{B}_k}{k!}\equiv W_p +q_p(2)+ \frac{1}{2} \pmod{p}=\gaW + \log_\A(2)+ \frac{1}{2} $$


\item specifically if $m=1$, 
$$\sum_{k=1}^{p-2}  \frac{\mathbf{B}_k}{k!}= \gaW= \gaQ(1)-1 $$

\item $$\sum_{k=1}^{p-2}  (-1)^k\frac{\mathbf{B}_k}{k!}\equiv \gaW  + 1 \pmod{p}$$

\item For any fixed integer $n$ with $1\leq n \leq p-1$, we have
$$\sum_{k=1}^{p-2}  H_n^{(k)}\frac{\mathbf{B}_k}{k!}\equiv nW_p +q_p(n!) \pmod{p}=n\gaW + \log_\A(n!) $$
where the generalized harmonic number of order $k\geq 1$, $H_n^{(k)}:= \sum_{m=1}^{n}\frac{1}{m^k} $ for $n\geq 1$ and $\gaQ(1)\coloneqq\left(\mathrm{Q}_p(1)\bmod p\right)_p=\gaAG + \log_\A$. 
      
  \end{enumerate} 
  
\end{theorem}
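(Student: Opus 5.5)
The plan is to read Theorem~\ref{KA} as Agoh's Theorem~\ref{Ag} transported into $\A$. For each fixed prime $p$, the left-hand sums are $p$-integral rationals: the denominators of $\mathbf{B}_k$ for $k\le p-2$ involve only primes $\ell$ with $\ell-1\mid k$, so $\ell<p$, and $k!$ is prime to $p$. Hence each sum has a well-defined residue modulo $p$. By definition of $\A$, two elements $(a_p)_p$ and $(b_p)_p$ coincide as soon as $a_p\equiv b_p \pmod p$ for all but finitely many $p$. So every claimed equality in $\A$ reduces to the corresponding congruence modulo $p$, applied componentwise.

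First, I rewrite the right-hand sides using the definitions already in place. Definition~\ref{Euler-W} gives $\gaW=(W_p\bmod p)_p$. The definition of $\log_\A$ gives $\log_\A(m)=(q_p(m)\bmod p)_p$, where the finitely many $p\mid m$ are irrelevant in $\A$. Definition~\ref{QAG} gives $\gaQ(m)=\gaAG+\log_\A(m)$. Rational constants such as $1/m$ and $1/2$ enter through the diagonal embedding of $\Q$.

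Then the parts follow in turn:
\begin{itemize}
\item Parts (1) and (2) are items 1 and 2 of Theorem~\ref{Ag}, read componentwise.
\item Parts (3) and (4) are the specialisation $m=2$.
\item Parts (5) and (6) are items 3 and 4 of Theorem~\ref{Ag}, using $q_p(1)=0$, so that $\log_\A(1)=0$.
\item Part (7) is item 5 of Theorem~\ref{Ag}, with $\log_\A(n!)=\sum_{j\le n}\log_\A(j)$ by the functional equation \eqref{eq:loglaw}.
\end{itemize}
The side identity $q_p(p-m)\equiv q_p(m)+\frac1m \pmod p$ in part (2) comes from a binomial expansion: $(p-m)^{p-1}\equiv m^{p-1}-(p-1)pm^{p-2} \pmod{p^2}$. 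Dividing by $p$ and using $m^{p-2}\equiv m^{-1}$ gives the claim.

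The main obstacle is the identification with $\gaQ(m)-1$ in parts (1), (3) and (5). This needs $\gaAG=\gaW+1$, that is, $\mathbb{AG}_p=\mathbf{B}_{p-1}+\frac1p\equiv W_p+1 \pmod p$. I would take this from the Glaisher--Beeger congruence in Property~\ref{prop all}(3), $W_p\equiv \mathbf{B}_{p-1}+\frac1p-1$, instead of from the Agoh--Giuga conjecture. This is legitimate because $p\mathbf{B}_{p-1}\equiv -1 \pmod p$ holds unconditionally for primes by von Staudt--Clausen, so $\mathbb{AG}_p$ is $p$-integral. With that, $\gaQ(m)-1=\gaAG-1+\log_\A(m)=\gaW+\log_\A(m)$, and all parts follow. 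As written, the statement's ``$=\gaQ(m)-1$'' implicitly uses this identity, so I would state it explicitly as the key lemma.
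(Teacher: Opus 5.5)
Your route is the paper's own. Its proof of Theorem~\ref{KA} is a one-line appeal to Property~\ref{prop all}, Theorem~\ref{Ag} and Property~\ref{Kane}. Your bookkeeping is correct and more explicit than the paper's: $p$-integrality of the sums, componentwise equality in $\A$, $\gaAG=\gaW+1$ from Glaisher--Beeger plus von Staudt--Clausen, and the expansion of $q_p(p-m)$.

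The gap is the input both arguments take on trust. Theorem~\ref{Ag} as printed, with $\mathbf{B}_k/k!$, is false. So transporting it into $\A$ establishes nothing, and the congruences in Theorem~\ref{KA} are themselves false as written. For $p=7$, $m=1$,
\[
\sum_{k=1}^{5}\frac{\mathbf{B}_k}{k!}=-\frac12+\frac1{12}-\frac1{720}=-\frac{301}{720}\equiv 0 \pmod 7,
\]
whereas $W_7=103\equiv 5\pmod 7$. For $m=2$ the left side of part (1) is $\equiv 1$, while $W_7+q_7(2)=112\equiv 0\pmod 7$. For $p=17$, $m=1$, the left side is $\equiv 6$ while $W_{17}\equiv 5$. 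All seven parts rest on these sums (part (7) with $n=1$ is part (5)), so none of them is proved.

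What is true, and presumably what Agoh proved, is the same list with $\mathbf{B}_k/k$ in place of $\mathbf{B}_k/k!$. It is short enough that you should prove it rather than cite it. For $1\le m<p$, start from Faulhaber's formula $p\sum_{j=1}^{m-1}j^{p-1}=\sum_{k=0}^{p-1}\binom pk\mathbf{B}_k m^{p-k}$, divide by $pm^p$, and reduce modulo $p$. Use three facts:
\begin{itemize}
\item $\frac1p\binom pk\equiv\frac{(-1)^{k-1}}{k}$ for $1\le k\le p-1$;
\item the exact identity $\frac1p+\mathbf{B}_{p-1}m^{1-p}=m^{1-p}\mathrm{Q}_p(m)$;
\item $\sum_{j=1}^{m-1}j^{p-1}\equiv m-1$.
\end{itemize}
This gives
\[
\sum_{k=1}^{p-2}\frac{(-1)^k\mathbf{B}_k}{k\,m^k}\equiv \mathrm{Q}_p(m)-1+\frac1m \pmod p .
\]
Comparing the $k=1$ terms (the only odd $k$ with $\mathbf{B}_k\neq0$) gives $\sum_{k=1}^{p-2}\frac{\mathbf{B}_k}{k\,m^k}\equiv \mathrm{Q}_p(m)-1$. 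Your key lemma turns this into $W_p+q_p(m)$. Summing over $m=1,\dots,n$ and using $\sum_{m\le n}q_p(m)\equiv q_p(n!)$ gives part (7). With this corrected input, your transport into $\A$ goes through verbatim.

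One further caveat: even the corrected congruence modulo $p$ needs $m<p$, not merely $p\nmid m$. The left side depends only on $m \bmod p$, while $q_p(m+p)\equiv q_p(m)-\frac1m$. This is harmless in $\A$, where $m$ is fixed.
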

\begin{proof}
    The proof of this follows from properties \ref{prop all}, Theorem \ref{Ag}, and property \ref{Kane}.
\end{proof}

\begin{corollary}
From $\gaM = \gaW + \ell_\mathcal{A}(2) - 1$, the following identities are equivalent:
    \begin{equation}
		\sum_{n=1}^{p-2} \frac{|G_n|}{n}\equiv \sum_{k=1}^{p-2}  \frac{1}{2^k}\frac{\mathbf{B}_k}{k!} +  q_p(2) - 1 \pmod{p} 
	\end{equation}
    equivalently,
    \begin{equation}
		-G_{p-1} - \frac{1}{p} \equiv \sum_{k=1}^{p-2}  \frac{1}{2^k}\frac{\mathbf{B}_k}{k!} +  q_p(2) - 1 \pmod{p}. 
	\end{equation}
    
\end{corollary}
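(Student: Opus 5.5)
The plan is to treat the two displayed congruences separately. The first is pure substitution between two results already stated. The second needs one extra identity for Gregory coefficients modulo $p$.

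\textbf{First congruence.} I start from Theorem~\ref{thm:main1} in Property~\ref{Kane}, written componentwise:
\[
\sum_{n=1}^{p-2}\frac{|G_n|}{n}\equiv W_p+2q_p(2)-1 \pmod p .
\]
Next I use item~3 of Theorem~\ref{KA} (the case $m=2$ of Theorem~\ref{Ag}), which gives
\[
\sum_{k=1}^{p-2}\frac{1}{2^k}\frac{\mathbf{B}_k}{k!}\equiv W_p+q_p(2)\pmod p .
\]
Eliminating $W_p$ between the two congruences yields
\[
\sum_{n=1}^{p-2}\frac{|G_n|}{n}\equiv \sum_{k=1}^{p-2}\frac{1}{2^k}\frac{\mathbf{B}_k}{k!}+q_p(2)-1 \pmod p .
\]
All quantities involved are $p$-integral: the Bernoulli numbers $\mathbf{B}_k$ with $k\le p-2$ by von Staudt--Clausen, and $G_n$ with $n\le p-2$ because their denominators involve only primes $\le n+1$. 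So this is an honest congruence for every odd prime $p$, and hence an identity in $\A$.

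\textbf{Second congruence.} It suffices to prove
\[
\sum_{n=1}^{p-2}\frac{|G_n|}{n}\equiv -G_{p-1}-\frac1p \pmod p .
\]
I will get this from the defining series \eqref{eq:gregory}. Multiplying $\frac{x}{\log(1+x)}$ by $\frac{\log(1+x)}{x}=\sum_{j\ge0}\frac{(-1)^j x^j}{j+1}$ gives $1$. Comparing coefficients of $x^{N}$ for $N\ge1$, with $G_0=1$, gives the recurrence
\[
\sum_{k=0}^{N}\frac{(-1)^{N-k}G_k}{N-k+1}=0 .
\]
Taking $N=p-1$, which is even, I separate the terms:
\begin{itemize}
\item The $k=p-1$ term is $G_{p-1}$.
\item The $k=0$ term is $\frac1p$.
\item For $1\le k\le p-2$, the term is $(-1)^kG_k/(p-k)$. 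Here $(-1)^kG_k=-|G_k|$ by the sign rule $(-1)^{k-1}G_k>0$, and $1/(p-k)\equiv -1/k\pmod p$, so the term is $\equiv |G_k|/k$.
\end{itemize}
Summing, $G_{p-1}+\frac1p+\sum_{k=1}^{p-2}\frac{|G_k|}{k}\equiv0\pmod p$, which is the desired congruence. Substituting it into the first congruence gives the second displayed form.

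\textbf{Main obstacle.} The delicate point is the $p$-integrality of $G_{p-1}+\frac1p$. The recurrence itself shows this, since $G_{p-1}+\frac1p$ equals minus a sum of $p$-integral terms, the $G_k$ with $k\le p-2$ being $p$-integral. Without this the congruence would not make sense; with it, the reduction in the third bullet is legitimate. Finally, translating everything into $\A$ via Definitions~\ref{Euler-W} and the definition of $\log_\A$ recovers the stated equivalence with $\gaM=\gaW+\ell_\A(2)-1$.
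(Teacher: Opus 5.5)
Your proposal is correct and follows the paper's route for the first congruence: you combine the Kaneko--Matsusaka--Seki congruence $\sum_{n=1}^{p-2}|G_n|/n\equiv W_p+2q_p(2)-1$ with the case $m=2$ of Agoh's theorem (item 3 of Theorem~\ref{KA}) and eliminate $W_p$, which is exactly what the paper's one-line proof invokes. For the second form the paper only cites Kaneko et al.; you instead derive the needed link $\sum_{n=1}^{p-2}|G_n|/n\equiv -G_{p-1}-\frac{1}{p}\pmod p$ directly, by taking the coefficient of $x^{p-1}$ in $\frac{x}{\log(1+x)}\cdot\frac{\log(1+x)}{x}=1$, which is a valid, self-contained argument.
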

\begin{proof}
    The proof of this is straightforward by using the theorem of Kaneko et al. \cite{kaneko2025finite} from property \ref{Kane} and theorem \ref{KA} above.
\end{proof}


\begin{lemma}\label{Kanswer}
If $ V_p \equiv  V_{p}^{*} +1 \pmod{p}$,
      then numbers $V_{p}\equiv  W_p + 2\pmod{p}$ and $V_{p}^{'}\equiv  W_p + \frac{1}{2} \pmod{p}$ is equivalent to  $V_{p}= \gaW+ 2\in \A$ and  $V_{p}^{'}= \gaW+ \frac{1}{2}\in \A$ respectively.
\end{lemma}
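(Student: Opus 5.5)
The plan is to use the two congruences already established in this section and then pass to $\A$ componentwise. The first input is the Vladimirov–Wilson lemma, giving $V_p\equiv W_p+2 \pmod{p}$ and $V_p^{*}\equiv 1+W_p\pmod p$. The second is Corollary \ref{ident}, giving $V_p'\equiv W_p+\tfrac12\pmod p$. The hypothesis $V_p\equiv V_p^{*}+1\pmod p$ is the preceding lemma, so it is consistent with these. I will use it only as a cross-check: it forces $V_p\equiv (1+W_p)+1=W_p+2$, which recovers the first congruence independently of Theorem \ref{Ag}(4).

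The next step is to make sense of $V_p$ and $V_p'$ as elements of $\A$. Each $V_p=\sum_{k=0}^{p-2}(-1)^k\mathbf{B}_k/k!$ involves only $\mathbf{B}_k$ with $k\le p-2$ and $k!$ with $k<p$. By von Staudt–Clausen, the denominators of these Bernoulli numbers contain only primes $\ell$ with $\ell-1\mid k$, hence $\ell\le k+1<p$. So $V_p\in\Z_{(p)}$, and its reduction $V_p\bmod p$ is well defined for every odd prime $p$. The same holds for $V_p'$ and $V_p^{*}$. Also $2$ is invertible mod $p$, so $\tfrac12$ makes sense under the diagonal embedding $\Q\hookrightarrow\A$. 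Hence $(V_p\bmod p)_p$ and $(V_p'\bmod p)_p$ are genuine elements of $\A$, and I denote them by $V_p$ and $V_p'$ as in the statement.

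Finally, I compare componentwise. By Definition \ref{Euler-W}, $\gaW=(W_p\bmod p)_p$, and the rationals $2$ and $\tfrac12$ embed diagonally. The congruences hold for every odd prime $p\ge3$, so the tuples $(V_p-W_p-2\bmod p)_p$ and $(V_p'-W_p-\tfrac12\bmod p)_p$ vanish at all but finitely many components (namely $p=2$). They therefore vanish in $\A$, since $\A$ is the quotient by $\bigoplus_p\Z/p\Z$. This gives $V_p=\gaW+2$ and $V_p'=\gaW+\tfrac12$ in $\A$.

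For the converse direction of the claimed equivalence, an equality in $\A$ only yields the congruences for all sufficiently large $p$. For the finitely many small primes I would supply them directly from the same lemma and corollary, or else read the equivalence as "for all but finitely many $p$". The main obstacle is this $p$-integrality and bookkeeping step, not any computation. In particular, Corollary \ref{Vp} expresses $V_p'$ through $V_p^{*}-\tfrac12$, and that relation must be checked to hold exactly as rationals before reducing mod $p$. If it does not, I would instead derive $V_p'\equiv W_p+\tfrac12$ from Theorem \ref{Ag}(3), using that the odd-index Bernoulli terms vanish except $\mathbf{B}_1=-\tfrac12$.
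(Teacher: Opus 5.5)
Your route is the one the paper intends (its proof is ``left to the reader''): take $V_p\equiv W_p+2$, $V_p^*\equiv 1+W_p$ and $V_p'\equiv W_p+\tfrac12 \pmod p$ from the preceding lemma and Corollary~\ref{ident}, and read them componentwise in $\A$. Your $p$-integrality check via von Staudt--Clausen is fine. Corollary~\ref{Vp} is an exact rational identity ($V_p^*=\tfrac12+V_p'$, $V_p=\tfrac32+V_p'$), so that worry is misplaced.

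The gap is the step ``the congruences hold for every odd prime $p\ge 3$'': they do not. They rest on Theorem~\ref{Ag}(3),(4), and as stated here, with $\mathbf{B}_k/k!$, that theorem is false.
\begin{itemize}
\item At $p=7$: $V_7=1+\tfrac12+\tfrac1{12}-\tfrac1{720}=\tfrac{1139}{720}\equiv 2\pmod 7$. But $W_7=103\equiv 5$, so $W_7+2\equiv 0$.
\item Likewise at $p=7$: $V_7'=\tfrac{59}{720}\equiv 4$, but $W_7+\tfrac12\equiv 2$.
\item At $p=17$: $\sum_{k=1}^{15}\mathbf{B}_k/k!\equiv 6$, while $W_{17}\equiv 5\pmod{17}$.
\end{itemize}
The congruence that actually holds is $\sum_{k=1}^{p-2}(-1)^k\mathbf{B}_k/k\equiv W_p+1\pmod p$. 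It follows from $\sum_{k=0}^{p-1}\binom pk\mathbf{B}_k=0$, $\binom pk\equiv (-1)^{k-1}\tfrac pk \pmod{p^2}$ and Glaisher's $W_p\equiv \mathbf{B}_{p-1}+\tfrac1p-1$. It has $k$ in the denominator, not $k!$. Vladimirov's atoms use $k!$, so nothing transfers to $V_p$, $V_p^*$ or $V_p'$.

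Since $\A$ only forgives finitely many primes, you would need $V_p\equiv W_p+2$ for all sufficiently large $p$. Nothing in your argument, or in the paper, supplies that. The failures at $p=7$ and $p=17$ show there is no identity behind it, so $V_p=\gaW+2$ and $V_p'=\gaW+\tfrac12$ in $\A$ remain unproved, and are presumably false.

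Your ``independent'' cross-check is not independent. $V_p=V_p^*+1$ is just the exact identity $V_p-V_p^*=-2\mathbf{B}_1=1$, and $V_p^*\equiv 1+W_p$ comes from the same faulty Theorem~\ref{Ag}(3). Your fallback uses that same theorem, so it has the same flaw.

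What your argument does establish is only the formal translation: a congruence holding for all but finitely many $p$ is equivalent to the corresponding equality in $\A$. For that, the hypothesis $V_p\equiv V_p^*+1$ plays no role.
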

\begin{proof}
    The proof of this is trivial and left to the reader as an exercise.
\end{proof}


\subsection{Proof of theorem \ref{atta}}
\begin{proof}
\begin{align*}
      K_p= \sum_{ n=0}^{p-2} n! + (p-1)!&=\mathbf{Bell}_{p-1} + (p-1)! \\
       K_p + 1&=\mathbf{Bell}_{p-1} + (p-1)! +1\\
       \frac{ K_p + 1}{p}&\equiv\frac{\mathbf{Bell}_{p-1}}{p} + \frac{(p-1)! +1}{p} \pmod{p}\\
        \frac{K_p + 1}{p}&\!\underset{\text{Wilson}}{\equiv} \frac{ \mathbf{Bell}_{p-1}}{p}+ W_p \pmod{p}\\
        \frac{K_p + 1}{p}-\frac{ \mathbf{Bell}_{p-1}}{p}&\equiv W_p  \pmod{p}\\
         \frac{K_p - \mathbf{Bell}_{p-1} + 1}{p}&\!\underset{\text{Gertsch quotient}}{\equiv} W_p \pmod{p}\\
         \mathbb{G}_p &\equiv  W_p \pmod{p} \quad\mbox{see Theorem \ref{G-Wil} and definition \ref{quotient}}
\end{align*}
Also, from proposition \ref{lerch},  $\sum_{a=1}^{p-1} q_p(a) =W_p \pmod{p}$ and equation \ref{KBW}, we have
\begin{align*}
\frac{K_p + 1}{p}&\!\underset{\text{Lerch}}{\equiv}\frac{ \mathbf{Bell}_{p-1}}{p}+ \sum_{x=1}^{p-1} q_p(x)\\
\frac{K_p + 1}{p}-\frac{ \mathbf{Bell}_{p-1}}{p}&\equiv \sum_{x=1}^{p-1} q_p(x) \pmod{p}\\
 \frac{K_p - \mathbf{Bell}_{p-1} + 1}{p}&\!\underset{\text{Gertsch quotient}}{\equiv}\sum_{x=1}^{p-1} q_p(x) \pmod{p}\\
  \mathbb{G}_p &\equiv \sum_{x=1}^{p-1} q_p(x) \pmod{p}
\end{align*}
it is easy to see that $\mathcal{H}_p \equiv \frac{\sum_{x=1}^{p-1} q_p(x)- \mathbb{G}_p}{p} \pmod{p} $, if we set $x=2$, this yields
\begin{align*}
\frac{K_p - \mathbf{Bell}_{p-1} + 1}{p}&\!\underset{\text{Gertsch quotient}}{\equiv}\sum_{x=1}^{p-1} q_p(2) \pmod{p}\\
\frac{K_p - \mathbf{Bell}_{p-1} + 1}{p}&\!\underset{\text{Fermat quotient}}{\equiv}(p-1) q_p(2) \pmod{p}\\
\frac{K_p - \mathbf{Bell}_{p-1} + 1}{p}&\!\underset{\text{Wieferich}}{\equiv}-q_p(2) \pmod{p}
\end{align*}
which yields equation \ref{KBF}. Similarly, if $x=3$ we yield \ref{KBM}
$$\frac{K_p + 1}{p}\!\underset{\text{Mirimanoff}}{\equiv}\frac{ \mathbf{Bell}_{p-1}}{p}-q_p(3).$$

We know from Kaneko et al, that for $k\geq 2$ an element $Z_\A(k)\in \A$ is given by
$$Z_\A(k)\coloneqq\left(\frac{\mathbf{B}_{p-k}}k\,\bmod p\right)_p.$$
Also, by definition \ref{gq}, $\left(\mathbb{G}_p\bmod p\right)_p\in\A$, this implies that the Bell modulo $p$ must be in the "poor man's adele ring" (defined by Kaneko and Zagier see \cite{kanekoZagier2026}) $$\left(\frac{ \mathbf{Bell}_{p-1}}{p}\bmod p\right)_p\subset\left(\mathbb{G}_p\bmod p\right)_p\in\A$$ and the Kurepa modulo $p$ must also be in 

$$\left( \frac{K_p + 1}{p}\bmod p\right)_p\subset \left(\mathbb{G}_p\bmod p\right)_p\in\A$$
from Theorem \ref{G-Wil}, $\mathbb{G}_p \equiv  W_p \pmod{p}$ and from definition \ref{gq}, and $\left(\mathbb{G}_p\bmod p\right)_p$ thus,
\begin{align*}
    \left(\mathbb{G}_p\bmod p\right)_p=\left(W_p\bmod p\right)_p\\
    \left( \frac{K_p - \mathbf{Bell}_{p-1} + 1}{p}\bmod p\right)_p=\left(W_p\bmod p\right)_p\\
     \left(  \left( \frac{K_p + 1}{p}-\frac{ \mathbf{Bell}_{p-1}}{p}\right)\bmod p\right)_p=\left(W_p\bmod p\right)_p\\
 \left( \frac{K_p + 1}{p}\bmod p\right)_p- \left( \frac{ \mathbf{Bell}_{p-1}}{p}\bmod p\right)_p=\left(W_p\bmod p\right)_p\\
  \left( \frac{K_p + 1}{p}\bmod p\right)_p- \left( \frac{ \mathbf{Bell}_{p-1}}{p}\bmod p\right)_p=\gaW.
\end{align*}
Next, from equation \ref{KBL} we know
$$\frac{K_p + 1}{p}\!\underset{\text{Lerch}}{\equiv}\frac{ \mathbf{Bell}_{p-1}}{p}+ \sum_{x=1}^{p-1} q_p(x).$$
One easily sees that 
\begin{align*}
     \left(  \left( \frac{K_p + 1}{p}-\frac{ \mathbf{Bell}_{p-1}}{p}\right)\bmod p\right)_p=\left(\sum_{x=1}^{p-1} q_p(x)\bmod p\right)_p\\ 
      \left( \frac{K_p + 1}{p}\bmod p\right)_p- \left( \frac{ \mathbf{Bell}_{p-1}}{p}\bmod p\right)_p=\left(\sum_{x=1}^{p-1} q_p(x)\bmod p\right)_p\\
      \left( \frac{K_p + 1}{p}\bmod p\right)_p- \left( \frac{ \mathbf{Bell}_{p-1}}{p}\bmod p\right)_p=\sum_{x=1}^{p-1}\log_\A(x) \\
      \left( \frac{K_p + 1}{p}\bmod p\right)_p=\left( \frac{ \mathbf{Bell}_{p-1}}{p}\bmod p\right)_p+\sum_{x=1}^{p-1}\log_\A(x), 
\end{align*}
we know from equation \ref{KBF}
$$\frac{K_p + 1}{p}\!\underset{\text{Fermat}}{\equiv}\frac{ \mathbf{Bell}_{p-1}}{p}-q_p(2)$$ similarly, 

\begin{align*}
 \left( \frac{K_p + 1}{p}\bmod p\right)_p=\left( \frac{ \mathbf{Bell}_{p-1}}{p}\bmod p\right)_p+\sum_{x=1}^{p-1}\log_\A(2) \\
 \left(   \frac{K_p + 1}{p} =\frac{ \mathbf{Bell}_{p-1}}{p}+\sum_{x=1}^{p-1}\log_\A(2) \bmod p\right)_p\\
     \left(   \frac{K_p + 1}{p} =\frac{ \mathbf{Bell}_{p-1}}{p}-\log_\A(2) \bmod p\right)_p\\
    \mbox{and if we set $x=3$}\\
 \left( \frac{K_p + 1}{p}\bmod p\right)_p \!\underset{\text{Mirimanoff}}{=}\left(\frac{ \mathbf{Bell}_{p-1}}{p}\bmod p\right)_p -\log_\A(3)
\end{align*}
\end{proof}

\subsection{Proof of Theorem \ref{Kaneko-ring}}
\begin{proof}
From Theorems \ref{G-Wil}, \ref{KBW} and definition \ref{gq} we can easily see that
\begin{align*}
     \left(  \left( \frac{K_p + 1}{p}-\frac{ \mathbf{Bell}_{p-1}}{p}\right)\bmod p\right)_p=\left(W_p\bmod p\right)_p\\ 
      \left( \frac{K_p + 1}{p}\bmod p\right)_p- \left( \frac{ \mathbf{Bell}_{p-1}}{p}\bmod p\right)_p=\left(\frac{(p-1)! +1}{p}\bmod p\right)_p\\
      \left(   \frac{K_p + 1}{p} =\frac{ \mathbf{Bell}_{p-1}}{p}+\frac{(p-1)! +1}{p}\bmod p\right)_p\\
      \end{align*}
It is well known that $ K_p= \sum_{ n=0}^{p-2} n! + (p-1)!=\mathbf{Bell}_{p-1} + (p-1)! $ where from Wilson's theorem, $(p-1)!\equiv -1 \bmod{p}$ thus,
   \begin{align*}   
          \left(   K_p \equiv \mathbf{Bell}_{p-1} -1 \pmod p\right)_p
\end{align*}
furthermore, Gertsch \cite{gertsch1999congruences} showed that
 $$\mathbf{Der}_{p-1}\equiv K_p \pmod{p}\equiv \mathbf{Bell}_{p-1} - 1\pmod{p}.$$
 This leads to
 $$\left(   K_p \bmod p\right)_p=\gaKp$$
 and Sun and Zagier \cite{sun2011curious} showed that 
  $$(S_m)_{(p)}=\sum_{0<k<p-1} \dfrac{\mathbf{Bell}_k}{(-m)^k} \pmod{p}\equiv (-1)^{m-1}\mathbf{Der}_{m-1}\pmod{p}$$
also, Kaneko and Zagier \cite{kanekoZagier2026}(see example 6) showed that this will always belong to the subring $\BZ$ of the poor man's adele ring $\A,$ hence
 $$\gaKp\coloneqq\left(   K_p \bmod p\right)_p=\left(   \mathbf{Der}_{p-1} \bmod p\right)_p=\left(   \mathbf{Bell}_{p-1} - 1\pmod{p}\right)_p\in \A.$$
 Now what remains is to show that $\gaKp\neq0$.
 We shall rephrase this question as follows;
 \begin{question}
     Is there any prime $p\geq 3$ such that $\frac{K_p + 1}{p}\equiv 0  \mod{p}$?\\  Equivalently, does  
     $\bigg(\frac{ \mathbf{Bell}_{p-1}}{p}+ W_p\bigg)\equiv 0 \mod{p}$ for any $p\geq 3$?
 \end{question}
 
 To do this, we shall show from equation \ref{KBW}, $$ \frac{K_p + 1}{p}\!\underset{\text{Wilson}}{\equiv} \frac{ \mathbf{Bell}_{p-1}}{p}+ W_p$$ 
 that is non-zero, that is, $\frac{K_p + 1}{p} \bmod{p}$ does not vanish,
 It is well known that the only prime $p$ where $W_p\equiv 0 \pmod{p}$ are $p=5, 13, 563.$ Similarly, we check with same prime $p=5, 13, 563$ for $$\frac{ \mathbf{Bell}_{p-1}}{p} \pmod{p}$$ using mathematical software such as PARI-GP, we see that $\frac{ \mathbf{Bell}_{p-1}}{p} \pmod{p}$ never vanishes at $p= 13, 563$, therefore, $\frac{ \mathbf{Bell}_{p-1}}{p}+ W_p \pmod{p}$ will not vanish for any odd prime $p$,
 which makes $\frac{K_p + 1}{p} \bmod{p}$ non-zero. Furthermore, we know from Theorem \ref{atta} that
 $$\frac{K_p - \mathbf{Bell}_{p-1} + 1}{p}\equiv W_p \pmod{p},$$
 from  Theorem \ref{G-Wil} and definition \ref{quotient}, this yields the Gertsch quotient modulo prime
         $$\mathbb{G}_p \equiv  W_p \pmod{p} $$
 and the residue of the Gertsch quotient coincides with the Wilson quotient when $p=3,7, \ldots$(see theorem \ref{G-Wil}). The elements of $\mathbb{G}_p$ are always integers and  $\mathbb{G}_p\mod{p}$ is non-zero (see \ref{wilson} and \ref{intg}), however,  $W_p\mod{p}$ vanishes at $p=5, 13, 563$, but at these same primes the element of $\mathbb{G}_p\mod{p}$ remains non-zero. 
 Since  $\frac{ \mathbf{Bell}_{p-1}}{p}+ W_p \pmod{p}$ is non-zero, $\frac{K_p + 1}{p} \bmod{p}$ is also non-zero and there are no known odd prime for which $\mathbb{G}_p\equiv 0\pmod{p}$ (see Theorem \ref{NP}) then $K_p \pmod p$ must not vanish for $p\geq 3.$ 
 To finally check for the poor man's adele ring $\A$, we use equation \ref{E-wil};
 \begin{align*}
       \left( \frac{K_p + 1}{p}\bmod p\right)_p &=\left(\frac{ \mathbf{Bell}_{p-1}}{p}\bmod p\right)_p + \gaW\\
        \left( \frac{K_p + 1}{p}\bmod p\right)_p- \left( \frac{ \mathbf{Bell}_{p-1}}{p}\bmod p\right)_p&= \gaW\\
         \left( \frac{K_p - \mathbf{Bell}_{p-1} + 1}{p}\bmod p\right)_p&= \gaW\\
        \left(\mathbb{G}_p\bmod p\right)_p&= \gaW\\
         \gaG&= \gaW.
 \end{align*}
 Kaneko et al. \cite{kaneko2025finite} mentioned that they do not know whether $\gaW$ is zero or not. However, we know that there are no known primes for which $\mathbb{G}_p\equiv 0\pmod{p}$ and also $\gaG\neq 0$ by lemma \ref{Nzero}, it is trivial to see that $\left( \frac{K_p + 1}{p}\bmod p\right)_p\subset \gaG$, and $\left(\frac{ \mathbf{Bell}_{p-1}}{p}\bmod p\right)_p\subset \gaG$, then $$\gaKp\coloneqq\left(   K_p \bmod p\right)_p=\left(   \mathbf{Der}_{p-1} \bmod p\right)_p=\left(   \mathbf{Bell}_{p-1} - 1\pmod{p}\right)_p$$ does not vanish and $\gaKp \neq 0.$
\end{proof}

\begin{proposition}\label{ago}
Let $K_p \equiv \mathbf{Bell}_{p-1} -1 \pmod p$ for $p\geq 3$ and   $p\mathbf{B}_{p-1}\equiv -1 \pmod{p}$ be the Agoh-Giuga conjecture;  then  
\begin{align*}\label{KBA}
    \frac{K_p + 1}{p}\!\underset{\text{Agoh-Guiga}}{\equiv} \frac{ \mathbf{Bell}_{p-1}}{p}+ \mathbb{AG}_p\tag{\textbf{KBA}}
\end{align*}  
and
\begin{align*}\label{GFQ}
\mathbb{G}_p + q_p(m)\underset{\text{Fermat}}{\equiv}  \mathrm{Q}_p(m)\tag{\textbf{GFQ}}
\end{align*}
if we set $m=2$ and $m=3$ then,
\begin{align*}\label{GFW}
\mathbb{G}_p + q_p(2)\underset{\text{Weiferich}}{\equiv}  \mathrm{Q}_p(2)\tag{\textbf{GFW}};
\end{align*} 
\begin{align*}\label{GFM}
\mathbb{G}_p + q_p(3)\underset{\text{Mirimanoff}}{\equiv}  \mathrm{Q}_p(3)\tag{\textbf{GFM}}.
\end{align*}  
From equations \ref{KBA}, \ref{GFQ}, \ref{GFW} and definition \ref{QAG} with $\gaG, \quad \gaQ(m), \quad \log_\A(m)\in \A$, then

\begin{align}
\gaG + \log_\A(m)\coloneqq\gaQ(m)
\end{align}
if we set $m=2$ and $m=3$ then,
\begin{align}
\gaG + \log_\A(2)\coloneqq \gaQ(2)
\end{align}
and 
\begin{align}
\gaG + \log_\A(3)\coloneqq \gaQ(3)
\end{align}
where $\mathrm{Q}_p(m):= \mathbb{AG}_p + q_p(m) $.
\end{proposition}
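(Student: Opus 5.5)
The plan is to follow the template of the proof of Theorem \ref{atta}, with the Wilson quotient $W_p$ replaced by the Agoh--Giuga quotient $\mathbb{AG}_p$ of Lemma \ref{Alem}. I start from the exact identity $K_p=\mathbf{Bell}_{p-1}+(p-1)!$ used in the proof of Theorem \ref{Gert}. This gives
\[
\frac{K_p+1}{p}=\frac{\mathbf{Bell}_{p-1}}{p}+W_p .
\]
Here $\frac{K_p+1}{p}$ and $\frac{\mathbf{Bell}_{p-1}}{p}$ are rationals that need not be $p$-integral, but their difference $W_p$ is an integer. So \eqref{KBA} is equivalent to the single congruence $W_p\equiv \mathbb{AG}_p \pmod p$, read through the representative $K_p\equiv \mathbf{Bell}_{p-1}+p\mathbf{B}_{p-1}$ of Definition \ref{KAG}.

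To compare $W_p$ with $\mathbb{AG}_p=\mathbf{B}_{p-1}+\frac1p$, I invoke the Glaisher--Beeger congruence of Properties \ref{prop all}, namely $W_p\equiv \mathbf{B}_{p-1}+\frac1p-1 \pmod p$. As a cross-check, I lift Wilson's theorem to $p^2$ with Carlitz's $p(p+1)\mathbf{B}_{p-1}\equiv (p-1)!\pmod{p^2}$ together with $p\mathbf{B}_{p-1}\equiv -1\pmod p$. Both routes give $(p-1)!+1\equiv p\mathbf{B}_{p-1}+1-p \pmod{p^2}$, and hence $W_p\equiv \mathbb{AG}_p-1\pmod p$. \eqref{KBA} then follows from the first display with $\mathbb{AG}_p$ in place of $W_p$, in the same formal sense in which \eqref{KBW} is used, with $(p-1)!$ replaced by $p\mathbf{B}_{p-1}$ via the Agoh--Giuga congruence.

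\textbf{Main obstacle.} The substitution of $p\mathbf{B}_{p-1}$ for $(p-1)!$ is only valid modulo $p$, whereas division by $p$ requires agreement modulo $p^2$. The computation above shows a discrepancy of exactly $-1$ at that level. I would first try to absorb this constant into the normalization of $\mathbb{AG}_p$, or into the choice of lift in Definition \ref{KAG}. Failing that, I would state \eqref{KBA} with the explicit correction $-1$, i.e. $\frac{K_p+1}{p}\equiv\frac{\mathbf{Bell}_{p-1}}{p}+\mathbb{AG}_p-1$. The correction does not affect the structural form of the remaining statements.

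For \eqref{GFQ}, the proof of Theorem \ref{atta} gives $\mathbb{G}_p\equiv W_p\pmod p$ via Definition \ref{quotient}. Combined with the previous step, this yields $\mathbb{G}_p$ congruent to $\mathbb{AG}_p$, up to the same shift. Adding $q_p(m)$ to both sides and using $\mathrm{Q}_p(m)=\mathbb{AG}_p+q_p(m)$ from Lemma \ref{Alem} gives \eqref{GFQ}. The specializations $m=2,3$ give \eqref{GFW} and \eqref{GFM}.

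Finally, I pass to $\A$ componentwise. Reducing each congruence modulo $p$ for every odd prime $p$ gives an identity of sequences in $\prod_p\Z/p\Z$, which descends to the quotient $\A$. By Definitions \ref{gq} and \ref{QAG} and the definition of $\log_\A$, this reads $\gaG+\log_\A(m)=\gaQ(m)$. The cases $m=2,3$ follow by specialization; the additivity \eqref{eq:loglaw} is not needed here.
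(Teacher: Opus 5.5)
\textbf{There is a genuine gap, and in fact the proposition is false as stated.}

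\textbf{Your first step fails.} $K_p=\mathbf{Bell}_{p-1}+(p-1)!$ is not an identity of integers. Only $\sum_{n=0}^{p-2}n!\equiv\mathbf{Bell}_{p-1}\pmod p$ holds; for $p=5$, $K_5=34$ while $\mathbf{Bell}_4+4!=39$. So you cannot import the ``$=$'' from the proof of Theorem~\ref{Gert}. Consequently $\frac{K_p+1}{p}-\frac{\mathbf{Bell}_{p-1}}{p}$ equals $\mathbb{G}_p$ (Definition~\ref{quotient}), not $W_p$. Nor is $\mathbb{G}_p\equiv W_p\pmod p$ available: $\mathbb{G}_5=4$ against $W_5=5$, and Theorem~\ref{G-Wil} itself records agreement only at $p=3,7,2887$ below $3000$. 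You also may not replace $K_p$ by the representative $\mathbf{Bell}_{p-1}+p\mathbf{B}_{p-1}$ before dividing by $p$. The correct reduction is this: (KBA), (GFQ), (GFW), (GFM) all amount to $\mathbb{G}_p\equiv\mathbb{AG}_p\pmod p$, and the three identities in $\A$ amount to the same congruence for all large $p$.

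\textbf{The reduced congruence is false.} Your Carlitz / Glaisher--Beeger computation $W_p\equiv\mathbb{AG}_p-1\pmod p$ is correct, so the target is $\mathbb{G}_p\equiv W_p+1\pmod p$. Using the appendix values of $\mathbb{G}_p$ and $\mathbb{AG}_p=\tfrac12,\tfrac16,\tfrac16,\tfrac16,-\tfrac{37}{210},-\tfrac{211}{30}$ for $p=3,5,7,11,13,17$:
\begin{itemize}
\item $\mathbb{G}_p\bmod p$ is $1,4,5,8,7,8$;
\item $\mathbb{AG}_p\bmod p$ is $2,1,6,2,1,6$.
\end{itemize}
They never agree, so the finite-level congruences have no proof.

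\textbf{Your fallback does not rescue it.} The ``$\mathbb{AG}_p-1$'' version is just $\mathbb{G}_p\equiv W_p$, and it fails at $p=5,11,13,17$. The shift is also not harmless: it turns (GFQ) into $\mathbb{G}_p+q_p(m)\equiv\mathrm{Q}_p(m)-1$ and the $\A$ statement into $\gaG+\log_\A(m)=\gaQ(m)-1$.

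\textbf{The paper's proof has the same defect.} It reaches (KBA) by dividing the mod-$p$ congruence $K_p+1\equiv\mathbf{Bell}_{p-1}+p\mathbf{B}_{p-1}+1$ by $p$. That is exactly the illegitimate step you flagged as the main obstacle. Your diagnosis is right, but no renormalization can repair it.

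\textbf{What is actually provable.} Only $\gaQ(m)=\gaAG+\log_\A(m)=\gaW+1+\log_\A(m)$ survives. The claimed identity would further require $\gaG=\gaW+1$. For that there is no argument, and it fails at every prime checked above.
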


\begin{proof}
We know that for $p\geq 3$
        $$K_p \equiv \mathbf{Bell}_{p-1} -1 \pmod p$$ 
from definition \ref{KAG} 
\begin{align*}
    K_p &\equiv \mathbf{Bell}_{p-1}+ p\mathbf{B}_{p-1} \pmod{p}\\
     K_p + 1&\equiv \mathbf{Bell}_{p-1} + p\mathbf{B}_{p-1}+1 \pmod{p}\\
       \frac{ K_p + 1}{p}&\equiv\frac{\mathbf{Bell}_{p-1}}{p} + \frac{p\mathbf{B}_{p-1} +1}{p} \pmod{p}\\
        \frac{K_p + 1}{p}&\!\underset{\text{Agoh-Guiga}}{\equiv} \frac{ \mathbf{Bell}_{p-1}}{p}+ \mathbb{AG}_p \pmod{p}\\
\end{align*}
Next, using the Fermat quotient $q_p (m)= \frac{m^{p-1}-1}{p}$ we have 
      \begin{align*}
          \frac{ K_p + 1}{p}+ q_p (m)&\equiv\frac{\mathbf{Bell}_{p-1}}{p} + \frac{p\mathbf{B}_{p-1} +1}{p} + q_p (m) \pmod{p}\\
\frac{K_p + 1}{p}-\frac{ \mathbf{Bell}_{p-1}}{p}+ q_p (m)&\equiv  \frac{p\mathbf{B}_{p-1} +1}{p} + q_p (m) \pmod{p}\\ 
 \frac{K_p - \mathbf{Bell}_{p-1} + 1}{p}+ q_p (m)&\equiv  \frac{p\mathbf{B}_{p-1} +1}{p} + q_p (m) \pmod{p}\\ 
 \mathbb{G}_p + q_p(m)&\equiv  \frac{p\mathbf{B}_{p-1} +1}{p} + q_p (m) \pmod{p}\\ 
  \frac{K_p - \mathbf{Bell}_{p-1} + m^{p-1}}{p}&\equiv \frac{p\mathbf{B}_{p-1}+m^{p-1}}{p}\pmod{p}\\
    \mathbb{G}_p + q_p(m)&\equiv \mathrm{Q}_p(m).
      \end{align*}
Finally, if we set $m=2$ then,
$$\mathbb{G}_p + q_p(2)\equiv \mathrm{Q}_p(2).$$
Next, 
\begin{align*}
    \left(   \mathbb{G}_p \bmod p\right)_p + \left(   q_p(m) \bmod p\right)_p&= \left(   \mathrm{Q}_p(m) \bmod p\right)_p\\
    \gaG + \log_\A(m)&\coloneqq\gaQ(m)
\end{align*}
if we set $m=2$ and $m=3$
      \begin{align*}
    \left(   \mathbb{G}_p \bmod p\right)_p + \left(   q_p(2) \bmod p\right)_p&= \left(   \mathrm{Q}_p(2) \bmod p\right)_p\\
    \gaG + \log_\A(2)&\coloneqq\gaQ(2),
\end{align*}
and 
\begin{align*}
    \left(   \mathbb{G}_p \bmod p\right)_p + \left(   q_p(3) \bmod p\right)_p&= \left(   \mathrm{Q}_p(3) \bmod p\right)_p\\
    \gaG + \log_\A(3)&\coloneqq\gaQ(3).
\end{align*}


\end{proof}

\newpage

\section{Numerical observations and Heuristics of results}\label{sec 8}
In this section we shall discuss certain questions that naturally arise from subsequent sections of this paper.\\
From lemma \ref{Kanswer} we ask;
\begin{question} Is  $$V_{p}= \gaW+ 2=0$$
and also, $$V_{p}^{'}= \gaW+ \frac{1}{2}=0?$$
\end{question}
In a private conversation with Kaneko Masanobu, he helped to check with PARI/GP software for prime values $p\leq 10^6$, in particular, He observed that 
$$V_{p}\equiv  W_p + 2\pmod{p}$$ is congruent to zero modulo $p$ for primes, $p=3, 7, 71$. Also, for 
$$V_{p}^{'}\equiv  W_p + \frac{1}{2}\pmod{p}$$ is congruent to zero modulo $p$ for primes, $p=3, 227, 1163$.

    To better understand Theorem \ref{VK}, we ask the following questions:
    \begin{itemize}
        \item Does $ \sum_{m=1}^{(p-3)/2} \frac{\mathbf{B}_{2m}}{(2m)!}\pmod{p}$ vanish for $p\geq 3$?

\item Similarly, does $ V_p=\sum_{k=0}^{p-2} (-1)^k \frac{\mathbf{B}_k}{k!}\pmod{p} $ vanish?
    \end{itemize}
From the Vladimirov's atoms and Theorem \ref{VK} and Theorem \ref{Francis}, we mimic with the Hodge integral constant $b_g$ discovered by Faber and Pandhariponde \cite{faber1998hodge} to see if we can get any information about the Kurepa modulo prime. 
The following question arose;
\begin{question}
For all odd primes $p=3,5,\cdots$
\begin{enumerate}
    \item is
\begin{align}
    \sum_{g=1}^{(p-3)/2}  b_g(K_{2g}-1)  \not \equiv  0\pmod{p}
\end{align}
Does this make the Kurepa conjecture $K_p\not\equiv 0 \pmod{p}$? 
\end{enumerate}
\end{question}
However, in a private communication with Don Zagier, He pointed out that 
this seems to be unnatural since $\sum_{g=1}^{(p-3)/2}  b_g(K_{2g}-1)$ could not tell us much about the value of the $K_p\bmod{p}.$

\subsection{The congruence $0 \pmod{p}$ }
There are no known primes $p\geq 3$ for which the Gertsch quotient, $\mathbb{G}_p \equiv 0 \pmod{p}.$ 
From Theorem \ref{G-Wil} we know that the Gertsch quotient and Wilson quotient coincide at certain primes say $p=3,7, 2887\ldots$
$$\mathbb{G}_p \equiv W_p \pmod{p} $$
However, at these primes $p=3,7, 2887\ldots$, the Gertsch quotient $\mathbb{G}_p \not\equiv 0 \pmod{p}.$ Also, since $$\mathbb{G}_p \equiv W_p \pmod{p} $$ it is natural to argue that at the prime where $W_p\equiv 0 \mod{p}$ such as $p=5,13,563$, the Gertsch quotient may also be congruent to zero modulo $p$, but this is not the case since  $$\mathbb{G}_p \not\equiv W_p \pmod{p} $$ for $p=5,13,563.$ If such $W_p\equiv 0 \bmod{p}$ existed it will be for $p> 5\times 10^{8}$ or $p\geq 2\times 10^{13}$ see \cite{crandall1997search, costa2014search}.
Another way to naturally argue is to use the Lerch relation; 
$$ \sum_{a=1}^{p-1} q_p(a) \bmod{p} \equiv W_p.$$
Now, remark that it is the 
$$(q_p(1)+q_p(2)+q_p(3)+q_p(4)+\cdots+ q_p(p-1)) \equiv W_p \bmod{p}$$ therefore just as theorem \ref{atta} and theorem \ref{Kaneko-ring}, although
$$\frac{K_p - \mathbf{Bell}_{p-1} + 1}{p}\!\underset{\text{Wieferich}}{\equiv}-q_p(2) \pmod{p}$$
and 
$$\frac{K_p - \mathbf{Bell}_{p-1} + 1}{p}\!\underset{\text{Mirimanoff}}{\equiv} q_p(3) \pmod{p}$$
yield $q_p(2)\equiv 0 \pmod{p}$ and $q_p(3)\equiv 0 \pmod{p}$ for $p=1093, \quad 3511$ and $p=11,\quad 1006003$ respectively \cite{OEIS:A014127, OEIS:A001220}, it does not make $\mathbb{G}_p \equiv 0 \pmod{p}.$ 
We observe that,
\begin{align*}
\mathbb{G}_p \equiv W_p \pmod{p}\\
    \frac{K_p - \mathbf{Bell}_{p-1} + 1}{p}\equiv W_p \pmod{p}\\
    K_p - \mathbf{Bell}_{p-1}\equiv pW_p -1 \pmod{p^2}\\
     K_p - \mathbf{Bell}_{p-1}\equiv (p-1)! \pmod{p^2}
\end{align*}
from the identity 
$p(p + 1)\mathbf{B}_{p-1}\equiv (p-1)! \pmod{p^2}$ (see equation 2.9 in \cite{carlitz1952some}) we obtain the Kurepa-Bell-Bernoulli \ref{KBB} 
\begin{align*}\label{KBB}
      K_p - \mathbf{Bell}_{p-1}\equiv p(p + 1)\mathbf{B}_{p-1}\pmod{p^2}\tag{\textbf{KBB}}
\end{align*}
One could search for primes which make this relation hold.
Also, all these could help us understand the Kurepa-Bell-Wilson \ref{KBW} congruence in section \ref{sec1} (as well as proposition \ref{strong cond}, corollary \ref{st cond}, and corollary \ref{st cnd}).

\subsection{proof of corollary \ref{st cond}}

The proof of this is straightforward from Theorem \ref{atta}, and if a Gertsch prime exists, it must conform to Theorem \ref{G-Wil}.
To the best of our knowledge, the Wilson primes are exclusively identified as $5, 13$, and $563$, which are not Gertsch primes (see to appendix \ref{secA1}).
It is necessary to identify primes \( p \geq 2 \times 10^{13} \) as referenced in \cite{costa2014search} to explore the potential existence of Wilson primes that may validate Theorem \ref{G-Wil}.

\bmhead{Acknowledgements}
Many thanks to Prof. Kaneko Masanobu for useful suggestions and for assisting me with the mathematical software PARI/GP. Also, my sincere gratitude to Prof. Don Zagier for sharing with me the reference \cite{kanekoZagier2026}, a joint paper with Prof. Kaneko Masanobu in preparation.
Thanks to colleagues Alexandre Cesa, R.M. Perez Wheelock, and Kevin W. S. Jung for their support with the typesetting of the appendix and their encouragement.

\newpage


\section*{Appendix}\label{secA1}

\begin{table}[h]
\normalsize
\renewcommand{\arraystretch}{1.3}
\caption{Some values Gertsch quotient $\mathbb{G}_p$ for prime $p\leq 61$ (see \cite{OEIS:A309483} by Amiram Eldar)}
\label{tab:gp_values}
\begin{tabularx}{\textwidth}{@{} l | >{\raggedright\arraybackslash}X @{}}

\toprule
\textbf{p} & \textbf{$\mathbb{G}_p$} \\
\midrule
3 & 1 \\
5 & 4 \\
7 & 96 \\
11 & 356540 \\
13 & 39903286 \\
17 & 1312583081304 \\
19 & 356826497344324 \\
23 & 51202108292508282304 \\
29 & 10903333036235662560405182340 \\
31 & 8851961858819132893480466080328 \\
37 & \seqsplit{10341369256681418109100257759613689061054} \\
41 & \seqsplit{20410983764150196478167108200311379711212644128} \\
43 & \seqsplit{33471988248845076246704814844693140092683344053436} \\
47 & \seqsplit{119680095889593902169611731792572420181399897412939250680} \\
53 & \seqsplit{1551704320329449188553505544936791636242289216222193046404083939884} \\
59 & \seqsplit{40539189508131106145581275089019179146420416222458964156217471022804657603628} \\
61 & \seqsplit{138722328581443601889768771573817998285456423842798757803191931619802810589153798} \\
\bottomrule
\end{tabularx}
\end{table}

\begin{table}[h]
\normalsize
\renewcommand{\arraystretch}{1.3}
\caption{Agoh-Guiga quotient $\mathbb{AG}_p$ for prime $p\leq 97$}
\label{tab:agp_values}
\renewcommand{\arraystretch}{1.5}
\begin{tabularx}{\textwidth}{@{} l | >{\raggedright\arraybackslash}X @{}}
\toprule
p & \textbf{$\mathbb{AG}_p$} \\
\midrule
3 & 1/2 \\
5 & 1/6 \\
7 & 1/6 \\
11 & 1/6 \\
13 & -37/210 \\
17 & -211/30 \\
19 & 2311/42 \\
23 & 37153/6 \\
29 & -818946931/30 \\
31 & 277930363757/422 \\
37 & -\seqsplit{711223555487930419}/51870 \\
41 & -\seqsplit{6367871182840222481}/330 \\
43 & \seqsplit{35351107998094669831}/42 \\
47 & \seqsplit{12690449182849194963361}/6 \\
53 & -\seqsplit{15116334304443206742413679091}/30 \\
59 & \seqsplit{1431925649981017658678758915153153}/6 \\
61 & -\seqsplit{19921854762028779869513196624259348280501}/930930 \\
67 & \seqsplit{21979104807855756030621185500775109585700001}/966 \\
71 & \seqsplit{2120255418779301462015162920814890260724481131}/66 \\
73 & -\seqsplit{79834999474930741238880510200562566647705319203319743}/1919190 \\
79 & \seqsplit{5251219817410137067027582728475216120154422473068360551}/42 \\
83 & \seqsplit{20204989749218624540038006142003251809731759368316306203393}/6 \\
89 & -\seqsplit{14735129086224915820174285663138335318491576130022793756145167309813}/690 \\
97 & -\seqsplit{2181447933992438279356677609379631274979834581330517877841636427010831632473617}/46410 \\
\bottomrule
\end{tabularx}
\end{table}

\clearpage


\renewcommand{\arraystretch}{1.2}
\begin{center}
\begin{longtable}{c | c | c | c}

\caption{Values of $\mathbf{Bell}_{p-1} \bmod p$, Wilson quotient $W_p \pmod p$, and their sum modulo $p\leq 600$ \cite{OEIS:A007540}}
\label{tab:bell_wilson} \\

\toprule
\textbf{$p$} & \textbf{$\mathbf{Bell}_{p-1} \bmod p$} & \textbf{$W_p \pmod p$} & \textbf{Sum mod $p$} \\
\midrule
\endfirsthead

\toprule
\textbf{$p$} & \textbf{$\mathbf{Bell}_{p-1} \bmod p$} & \textbf{$W_p \pmod p$} & \textbf{Sum mod $p$} \\
\midrule
\endhead

\bottomrule
\endfoot

3 & 2 & 1 & Fractional \\
\rowcolor{highlight}
5 & 0 & 0 & 3 \\
\rowcolor{highlight}
7 & 0 & 5 & 6 \\
11 & 2 & 1 & Fractional \\
\rowcolor{highlight}
13 & 11 & 0 & Fractional \\
17 & 14 & 5 & Fractional \\
19 & 10 & 2 & Fractional \\
23 & 22 & 8 & Fractional \\
29 & 18 & 18 & Fractional \\
31 & 3 & 19 & Fractional \\
37 & 6 & 7 & Fractional \\
41 & 5 & 16 & Fractional \\
43 & 17 & 13 & Fractional \\
47 & 19 & 6 & Fractional \\
53 & 14 & 34 & Fractional \\
59 & 29 & 27 & Fractional \\
61 & 23 & 56 & Fractional \\
67 & 66 & 12 & Fractional \\
71 & 69 & 69 & Fractional \\
73 & 56 & 11 & Fractional \\
79 & 21 & 73 & Fractional \\
83 & 28 & 20 & Fractional \\
89 & 77 & 70 & Fractional \\
97 & 81 & 70 & Fractional \\
101 & 14 & 72 & Fractional \\
103 & 51 & 57 & Fractional \\
107 & 44 & 1 & Fractional \\
109 & 66 & 30 & Fractional \\
113 & 110 & 95 & Fractional \\
127 & 57 & 71 & Fractional \\
131 & 82 & 119 & Fractional \\
137 & 94 & 56 & Fractional \\
139 & 135 & 67 & Fractional \\
149 & 83 & 94 & Fractional \\
151 & 11 & 86 & Fractional \\
157 & 132 & 151 & Fractional \\
163 & 5 & 108 & Fractional \\
167 & 31 & 21 & Fractional \\
173 & 105 & 106 & Fractional \\
179 & 30 & 48 & Fractional \\
181 & 171 & 72 & Fractional \\
191 & 105 & 159 & Fractional \\
193 & 166 & 35 & Fractional \\
197 & 10 & 147 & Fractional \\
199 & 123 & 118 & Fractional \\
211 & 131 & 173 & Fractional \\
223 & 43 & 180 & Fractional \\
227 & 226 & 113 & Fractional \\
229 & 51 & 131 & Fractional \\
233 & 70 & 169 & Fractional \\
239 & 13 & 107 & Fractional \\
241 & 129 & 196 & Fractional \\
251 & 61 & 214 & Fractional \\
257 & 148 & 177 & Fractional \\
263 & 53 & 73 & Fractional \\
269 & 17 & 121 & Fractional \\
271 & 57 & 170 & Fractional \\
277 & 8 & 25 & Fractional \\
281 & 219 & 277 & Fractional \\
283 & 155 & 164 & Fractional \\
293 & 265 & 231 & Fractional \\
307 & 199 & 271 & Fractional \\
311 & 49 & 259 & Fractional \\
313 & 300 & 288 & Fractional \\
317 & 206 & 110 & Fractional \\
331 & 252 & 164 & Fractional \\
337 & 102 & 41 & Fractional \\
347 & 135 & 235 & Fractional \\
349 & 344 & 8 & Fractional \\
353 & 76 & 151 & Fractional \\
359 & 91 & 184 & Fractional \\
367 & 183 & 100 & Fractional \\
373 & 3 & 224 & Fractional \\
379 & 74 & 133 & Fractional \\
383 & 102 & 122 & Fractional \\
389 & 99 & 234 & Fractional \\
397 & 153 & 219 & Fractional \\
401 & 184 & 235 & Fractional \\
409 & 385 & 151 & Fractional \\
419 & 119 & 375 & Fractional \\
421 & 307 & 7 & Fractional \\
431 & 166 & 392 & Fractional \\
433 & 154 & 371 & Fractional \\
439 & 227 & 375 & Fractional \\
443 & 183 & 149 & Fractional \\
449 & 166 & 412 & Fractional \\
457 & 182 & 246 & Fractional \\
461 & 421 & 55 & Fractional \\
463 & 42 & 417 & Fractional \\
467 & 4 & 77 & Fractional \\
479 & 284 & 299 & Fractional \\
487 & 258 & 89 & Fractional \\
491 & 236 & 318 & Fractional \\
499 & 131 & 422 & Fractional \\
503 & 246 & 458 & Fractional \\
509 & 369 & 379 & Fractional \\
521 & 165 & 170 & Fractional \\
523 & 513 & 10 & Fractional \\
541 & 280 & 194 & Fractional \\
547 & 169 & 397 & Fractional \\
557 & 391 & 96 & Fractional \\
\rowcolor{highlight}
563 & 107 & 0 & Fractional \\

\end{longtable}
\end{center}

\begin{table}
\normalsize
\renewcommand{\arraystretch}{1.3}   
\caption{ (See MIODRAG Zivković \cite{zivkovic1999number}) The factorizations of $!n - 1,\ n \leq 30$}
\label{tab:factorizations}

\renewcommand{\arraystretch}{1.2}

\begin{tabular}{c | l}
\toprule
$n$ & The factorization of $!n - 1$ \\
\midrule

3  & $3$ \\
4  & $3^2$ \\
5  & $3 \times 11$ \\
6  & $3^2 \times 17$ \\
7  & $3^2 \times 97$ \\
8  & $3^4 \times 73$ \\
9  & $3^2 \times 11 \times 467$ \\
10 & $3^2 \times 131 \times 347$ \\
11 & $3^2 \times 11 \times 40787$ \\
12 & $3^2 \times 11 \times 443987$ \\
13 & $3^2 \times 11^2 \times 23 \times 20879$ \\
14 & $3^2 \times 11 \times 821 \times 83047$ \\
15 & $3^2 \times 11 \times 2789 \times 340183$ \\
16 & $3^2 \times 11 \times 107 \times 509 \times 259949$ \\
17 & $3^2 \times 11 \times 225498914387$ \\
18 & $3^2 \times 11 \times 163 \times 20143 \times 1162943$ \\
19 & $3^2 \times 11 \times 19727 \times 3471827581$ \\
20 & $3^2 \times 11 \times 29 \times 43 \times 1621 \times 641751001$ \\
21 & $3^2 \times 11 \times 53 \times 67 \times 662348503367$ \\
22 & $3^2 \times 11 \times 877 \times 3203 \times 41051 \times 4699727$ \\
23 & $3^2 \times 11 \times 11895484822660898387$ \\
24 & $3^2 \times 11 \times 139 \times 2129333 \times 922459185301$ \\
25 & $3^2 \times 11 \times 37^2 \times 29131483 \times 163992440081$ \\
26 & $3^2 \times 11 \times 454823 \times 519472957 \times 690821017$ \\
27 & $3^2 \times 11 \times 107 \times 173 \times 7823 \times 12227 \times 1281439 \times 1867343$ \\
28 & $3^2 \times 11 \times 431363 \times 2882477797 \times 91865833117$ \\
29 & $3^2 \times 11 \times 191 \times 47793258077 \times 349882390108241$ \\
30 & $3^2 \times 11 \times 37 \times 283 \times 5087 \times 1736655143086866180331$ \\
$\vdots$ &\quad \quad  \quad \quad $\vdots$ \\

\bottomrule
\end{tabular}
\end{table}





\clearpage
\bibliography{sn-bibliography}

\end{document}